\newcommand\Y{\mathbb Y}
\newcommand\Z{\mathbb Z}
\newcommand\C{\mathbb C}
\newcommand\R{\mathbb R}
\newcommand\T{\mathbb T}
\newcommand\E{\mathbb E}
\renewcommand\Y{\mathbb Y}
\newcommand\al{\alpha}
\newcommand\be{\beta}
\newcommand\ga{\gamma}
\newcommand\Ga{\Gamma}
\newcommand\de{\delta}
\newcommand\la{\lambda}
\newcommand\epsi{\varepsilon}
\newcommand\om{\omega}
\newcommand\Om{\Omega}
\newcommand\X{\mathfrak X}
\newcommand\su{\mathfrak{su}(1,1)}
\newcommand\sltwo{\mathfrak{sl}(2,\C)}
\newcommand\Sym{\operatorname{Sym}}
\newcommand\Ran{\operatorname{Ran}}
\newcommand\const{\operatorname{const}}
\newcommand\sgn{\operatorname{sgn}}
\newcommand\Span{\operatorname{Span}}
\newcommand\SSpan{\overline{\Span}}
\newcommand\Cyl{\operatorname{Cyl}}
\newcommand\GAMMA{{\operatorname{gamma}}}
\renewcommand\Re{\operatorname{Re}}
\newcommand\tr{\operatorname{Tr}}
\newcommand\detreg{\operatorname{det}_2}
\newcommand\wt{\widetilde}
\newcommand\wh{\widehat}
\newcommand\one{\mathbf1}
\newcommand\K{\mathbf K}
\renewcommand\P{\mathcal P}
\newcommand\z{{(z,z')}}
\newcommand\zzz{{(z+1,z'+1)}}
\newcommand\PPsi{\overline\Psi}
\newcommand\ccdot{\,\cdot\,}
\newcommand\F{\mathcal F}
\newtheorem{theorem}{Theorem}[section]
\newtheorem{proposition}[theorem] {Proposition}
\newtheorem{corollary}[theorem]{Corollary}
\newtheorem{lemma}[theorem]{Lemma}
\theoremstyle{definition}
\newtheorem{definition}[theorem]{Definition}
\newtheorem{remark}[theorem]{Remark}
\newtheorem{example}[theorem]{Example}
\newtheorem{question}[theorem]{Question}
\numberwithin{equation}{section}
\begin{document}

\title[]
{A hierarchy of Palm measures for determinantal point processes with gamma kernels}

\author{Alexander I. Bufetov and Grigori Olshanski}
\address{A. Bufetov: Aix-Marseille Universit\'e, Centrale Marseille, CNRS, Institut de Math\'e\-matiques de Marseille, UMR7373, 39 Rue F. Joliot Curie 13453, Marseille, France; Steklov Mathematical Institute of RAS, 8 Gubkina 119991, Moscow, Russia; Institute for Information Transmission Problems, Bolshoy Karetny 19, Moscow 127051, Russia. \newline \indent \emph{Email}: {\tt alexander.bufetov@univ-amu.fr}}
\address{G. Olshanski: Institute for Information Transmission Problems, Bolshoy Karetny 19, Moscow 127051, Russia; Skolkovo Institute of Science and Technology, Moscow, Russia; Department of Mathematics, National Research University Higher School of Economics, Moscow, Russia.
\newline \indent \emph{Email}: {\tt olsh2007@gmail.com}}

\date{}

\begin{abstract}
The gamma kernels are a family of projection kernels $K^{(z,z')}=K^{(z,z')}(x,y)$ on a doubly infinite $1$-dimensional lattice. They are expressed through Euler's gamma function and depend on two continuous parameters $z,z'$. The gamma kernels initially arose from a model of random partitions via a limit transition. On the other hand, these kernels are closely related to unitarizable representations of the Lie algebra $\mathfrak{su}(1,1)$. Every gamma kernel $K^{(z,z')}$ serves as a correlation kernel for a determinantal measure  $M^{(z,z')}$, which lives on the space of infinite point configurations on the lattice.  

We examine chains of kernels of the form
$$
\ldots, K^{(z-1,z'-1)}, \; K^{(z,z')},\; K^{(z+1,z'+1)}, \dots,
$$
and establish the following hierarchical relations inside any such chain: 

Given $(z,z')$, the kernel $K^{(z,z')}$ is a one-dimensional perturbation of (a twisting of) the kernel $K^{(z+1,z'+1)}$, and  the one-point Palm distributions for the measure $M^{(z,z')}$ are absolutely continuous with respect to $M^{(z+1,z'+1)}$.  

We also explicitly compute the corresponding Radon-Nikod\'ym derivatives and show that they are given by certain normalized multiplicative functionals.
\end{abstract}

\maketitle

\tableofcontents

\section{Introduction}\label{sect1}

\subsection{Determinantal measures}\label{sect1.1}

Let $\X$ be a countable set and $\Om:=\{0,1\}^\X$ denote the set of subsets of $\X$; it is a compact space in the product topology. Let $\P(\Om)$ denote the space of probability Borel measures on $\Om$. Given an $n$-point subset $\{x_1,\dots,x_n\}\subset\X$,  let $\Cyl(x_1,\dots,x_n)$  denote the cylinder subset of $\Om$ consisting of those $\om\in\Om$ that contain all $x_i$'s. Any measure $M\in\P(\Om)$ is uniquely determined by its \emph{correlation functions} $\rho_1,\rho_2,\dots$, where 
\begin{equation}\label{eq1.A}
\rho_n(x_1,\dots,x_n):=M(\Cyl(x_1,\dots,x_n)), \qquad \text{$x_1,\dots,x_n\in\X$, pairwise distinct.}
\end{equation}

A measure $M\in\P(\Om)$ is said to be a \emph{determinantal measure} if there exists a complex function $K(x,y)$ on $\X\times\X$ such that 
$$
\rho_n(x_1,\dots,x_n)=\det[K(x_i,x_j]_{i,j=1}^n, \quad n=1,2,\dots\,.
$$
Any such function (typically, it is not unique) is called  \emph{a correlation kernel} of $M$. Obviously, $M$ is uniquely determined by any its correlation kernel. 

Consider the complex Hilbert space $\ell^2(\X)$. It has a distinguished orthonormal basis $\{e_x: x\in\X\}$ formed by the delta-functions. For a bounded operator $K$ on $\ell^2(\X)$ we set $K(x,y):=(K e_y,e_x)$ and call $K(x,y)$ the \emph{matrix} of $K$.  It is well known that for any selfadjoint operator $K$ such that $0\le K\le1$, the matrix $K(x,y)$ serves as a correlation kernel of a determinantal measure $M\in\P(\Om)$. 

In particular, if $K$ is a selfadjoint projection operator, that is, the operator of orthogonal projection onto a subspace $L\subseteq\ell^2(\X)$, then $K$ gives rise to a determinantal measure. Let us denote it by $M[L]$ and call it a \emph{projection kernel measure}.

For more detail about determinantal measures, see the surveys \cite{BHKPV}, \cite{Bor}, \cite{Ly-2003}, \cite{Ly-2014}, \cite{Soshnikov}. 

\subsection{The gamma kernel measures}

Let $\Y$ denote the set of partitions, which we identify with their Young diagrams. Let us identify $\X$ with the set $\Z':=\Z+\tfrac12$ of half-integers, so that $\Om=\{0,1\}^{\Z'}$. We embed $\Y$ into $\Om$ by making use of the map 
\begin{equation}\label{eq1.B}
\Y\ni \la \mapsto \om(\la):=\{\la_i-i+\tfrac12: i=1,2,\dots\}
\end{equation}
and regard $\Om$ as a compactification of the discrete set $\Y$. 

In connection with the problem of harmonic analysis on the infinite symmetric group, the paper \cite{BO-2000} introduced a three-parameter set of determinantal measures on $\Y\subset\Om$ called the (mixed) \emph{z-measures}. We denote them by $M^\z_\xi$, where $\z$ is a pair of complex parameters (subject to some constraints specified below) and $\xi$ is a real parameter such that $0<\xi<1$.  The measures $M^\z_\xi$ are a special (and in many respects distinguished) example of \emph{Schur measures}. 

As shown in \cite{BO-2005}, for $\z$ fixed, there exists a weak limit
$$
M^\z:=\lim_{\xi\to1}M^\z_\xi\in\P(\Om).
$$
The limit measures $M^\z$ are our object of study. Here are some of their properties.

\smallskip

$\bullet$ Unlike the z-measures, the measures $M^\z$ are no longer supported by $\Y\subset\Om$.

$\bullet$ Each $M^\z$ is a determinantal measure. It admits a correlation kernel $K^\z(x,y)$, which can be written in the so-called \emph{integrable form}, meaning that
\begin{equation}\label{eq1.C}
K^\z(x,y)=\frac{\mathcal A(x)\mathcal B(y)-\mathcal B(x)\mathcal A(y)}{x-y},\quad x,y\in\Z',
\end{equation}
with a suitable resolution of singularity on the diagonal $x=y$. Here $\mathcal A(x)$ and $\mathcal B(x)$ are certain functions on $\Z'$ that depend on $\z$ and are expressed through the Euler gamma function. For this reason $K^\z(x,y)$ is called the \emph{gamma kernel} and $M^\z$ is called the \emph{gamma kernel measure}. 

$\bullet$ The gamma kernel corresponds to a selfadjoint projection operator $\K^\z$ on the Hilbert space $\ell^2(\Z')$, so that $M^\z$ belongs to the class of projection kernel measures. 

$\bullet$ The projections $\K^\z$ are closely related to unitarizable representations of the Lie algebra $\su$ of the principal and complementary series.  

\smallskip

\subsection{Link with representations of $\su$}

Let us explain the last point in more detail. First, let us specify the constraints on $\z$. We suppose that $\z$ satisfies one of the following two conditions: either $z'=\bar z\in\C\setminus\Z$ or there exists $\ell\in\Z$ such that both $z$ and $z'$ are real and contained in $(\ell,\ell+1)$.  One can write down a family $\{S^\z\}$ of irreducible representations of the Lie algebra $\sltwo$, which are realized in the subspace of finitely supported functions in $\ell^2(\Z')$ and are unitarizable with respect to the noncompact real form $\su\subset\sltwo$.  Set $\mathbf X:=\begin{bmatrix} -1 &1\\-1&1\end{bmatrix}$; it  is a nilpotent element of $\sltwo$. The corresponding operator $S^\z(\mathbf X)$ is a essentially selfadjoint, so  that its closure $\overline{S^\z(\mathbf X)}$ is a selfadjoint operator. The spectrum of  $\overline{S^\z(\mathbf X)}$ is purely continuous, filling the whole real axis, and it turns out that $\K^\z$ coincides with the spectral projection on the positive part of the spectrum. 

The representations $S^\z$ with $z'=\bar z$ constitute the principal series and those with $z,z'\in(\ell,\ell+1)$ form the complementary series. Below we extend this terminology to the projections $\K^\z$, too. Note that for any $m\in\Z\setminus\{0\}$,  the representations $S^\z$ and $S^{(z+m, z'+m)}$ are equivalent, but  $\K^\z\ne\K^{(z+m,z'+m)}$.  

\smallskip

\subsection{An inductive relation between the projections $\K^\z$.}

In what follows we denote by the symbol $\Ran(\ccdot)$ the range of a linear operator. 
We start by introducing a special basis for the  range $\Ran(\K^\z)\subset\ell^2(\Z')$ of the projection $\K^\z$. Consider the following functions on $\Z'$
\begin{multline}\label{eq3.g-prelim}
g^\z_m(x)
=\frac{\sin(\pi z')\Ga(1+z-z')}{\pi}\, \frac{\Ga(x+z+\tfrac12)}{\sqrt{\Ga(x+z+\tfrac12)\Ga(x+z'+\tfrac12)}} \,\frac{\Ga(x+z'+m+\tfrac12)}{\Ga(x+z+m+\tfrac32)}. 
\end{multline}
For these functions we establish, cf.   Corollary \ref{cor3.D},  the key relations
\begin{equation}\label{eq3.r2-prelim}
K^\z(x,y)=\sum_{m=0}^\infty g^\z_m(x)g^{(z',z)}_m(y),
\end{equation}
\begin{multline}\label{eq3.r4-prelim}
K^\z(x,y)=\sum_{i=0}^{m-1} g^\z_i(x)g^{(z',z)}_i(y)\\
+ \frac{A^\z(x) A^{(z+m,z'+m)}(y)}{A^{(z+m,z'+m)}(x) A^\z(y)}\, K^{(z+m,z'+m)}(x,y), \qquad m=1,2,\dots.
\end{multline}
Let $V^\z_m$ stand for the operator  of multiplication by the function $A^\z(x)/A^{(z+m,z'+m)}(x)$.
The relation \eqref{eq3.r4-prelim} implies, see Theorem \ref{thm3.B} below, that the  
space 
$$
\Ran(V^\z_m \K^{(z+m,z'+m)} (V^\z_m)^{-1})
$$ 
is  contained in the space $\Ran(\K^\z)$ and has codimension $m$.

The relation \eqref{eq3.r2-prelim} and Theorem \ref{thm3.B} hold for all admissible values of the parameters $(z,z')$ but have a different interpretation in the case of the principal and the complementary series. 
In the case of the principal series, when $z=\overline{z'}$, the functions $g^{(z, \overline{z})}_m$ form an orthonormal basis for the subspace $\Ran(\K^\z)$, see Theorem \ref{thm3.A} below. Furthermore, 
the operator  $V^\z_m$ is unitary and the direct sum in Theorem \ref{thm3.B} is an orthogonal direct sum: the subspace $\Ran(\K^\z)$ is thus represented as an explicit rank $m$ perturbation of the subspace $\Ran \K^{(z+m,z'+m)}$ twisted by a unitary operator. For the principal series we also explain a relationship of the subspaces $\Ran\K^{(z,\bar z)}$ to the classical \emph{Beurling theorem} \cite{Helson}. 

In the case of the complementary series, the direct sum is no longer orthogonal, and the twist is no longer unitary. It is nevertheless true, see Theorem \ref{thm3.C} below,  that  the subspace $\Ran(\K^\z)$ is the closed linear span of the functions $g^\z_0, g^\z_1, g^\z_2,\dots$\,. The relation \ref{eq3.r2-prelim} implies that the biorthogonal family is precisely the family $g^{(z',z)}_0, g^{(z',z)}_1, g^{(z',z)}_2, \dots$.

\subsection{Reduced Palm measures and the main result}
We are now ready to give an informal description of the main result of this paper. 

In the case of a point process on a discrete space $\X$, the Palm measure is simply the conditional measure subject to the condition that our process have a particle at a given site $p\in\X$. For notational convenience, this particle, on whose existence one conditions, is then removed. The resulting conditional measure is called the \emph{reduced Palm measure}. Let us denote it by $M(p)$, where $M\in\P(\{0,1\}^\X)$ stands for the distribution of the initial point process. We are interested in  the reduced Palm measure $M^\z(p)$, where $M^\z$ is a gamma kernel measure and $p\in\Z'$. Note that $M^\z(p)$ and $M^\z$ are mutually singular, see  \cite[Proposition 4.3]{Buf-rigid}.

The main result of our paper, Theorem \ref{thm6.A},  states that $M^\z(p)$  is absolutely continuous with respect to  $M^{(z+1, z'+1)}$ and the Radon-Nikod\'ym derivative $M^\z(p)/M^\z$ can be computed explicitly. 

In more detail,  for $p\in \mathbb Z'$, introduce a function $a^\z_p$ on $\mathbb Z'$ by the formula
\begin{equation}\label{azp-def}
a^\z_p(x):=\dfrac{(x-p)^2}{(x+z+\tfrac12)(x+z'+\tfrac12)}, \qquad x\in\Z',
\end{equation}
and write for $\om\in\Om$
\begin{equation}\label{azp-funct}
\check \Psi_{p;z,z'}(\omega):=\begin{cases}  \prod\limits_{x\in \omega, x>0} a^\z_p(x)\cdot  \prod\limits_{y\notin \omega, y<0, y\ne p} \left(a^\z_p\right)^{-1}(y),  & p\notin\om, \\
0, & p\in\om.
\end{cases}
\end{equation}
One can prove that both infinite products in \eqref{azp-funct}
converge $M^{(z+1,z'+1)}$-almost surely. Theorem \ref{thm6.A}  states (in an equivalent formulation, see Remark \ref{rem6.C}) that $M^\z(p)/M^\z$ coincides with 
$\check \Psi_{p;z,z'}$ up to a constant factor.

\subsection{Acknowledgements.} A. Bufetov has received funding from the European Research Council (ERC) under the European Union’s Horizon 2020 research and innovation programme under grant agreement No 647133 (ICHAOS) and was also supported by the Russian 
Foundation for  Basic Research, grant 18-31-20031. A.B. is deeply grateful to the warm hospitality of  the Pontificia Universidad Cat\'olica de Valpara\'iso, where a part of this work was written.

\section{Preliminaries on the gamma kernels $K^\z(x,y)$}

\subsection{The z-measures $M^\z_\xi$}

Recall that $\Y$ denotes the set of partitions, which we identify with their Young diagrams. Given a partition $\la\in\Y$, we set
$$
|\la|:=\la_1+\la_2+\dots;
$$
equivalently, $|\la|$ is the number of boxes of the Young diagram $\la$. Next,
we set
$$
(z)_\la:=\prod_{\Box\in\la}(z+c(\la)), \qquad z\in\C,
$$
where the product is taken over the set of boxes of $\la$ and $c(\Box)$ is the
{\it content\/} of a box $\Box$ (that is, $c(\Box)=j-i$, where $i$ and $j$ are
the row and column numbers of $\Box$). Finally, by $\dim\la$ we denote the
number of standard tableaux of shape $\la$; this number equals the
dimension of the irreducible representation of the symmetric group of degree
$|\la|$ indexed by $\la$.

\begin{definition}\label{def2.A}
We say that a pair $\z$ of complex or real parameters is \emph{admissible} if one of the following two conditions holds: 
\smallskip

$\bullet$ $z\in\C\setminus\Z$ and  $z'=\bar z$;

$\bullet$ $z$ and $z'$ are distinct real numbers contained in an open interval of the form $(\ell, \ell+1)$ for some $\ell\in\Z$.

We say that $\z$ is in the \emph{principal series} in the first case, and in the \emph{complementary series}, in the second case. 
\end{definition}

\begin{definition}\label{def2.B}
Let $(z,z')$ be an admissible pair of parameters and $\xi\in(0,1)$ be an extra parameter. With such a triple $(z,z',\xi)$ we associate a measure on $\Y$ called the (mixed) {\it
z-measure\/} and denoted by $M^\z_\xi$:
\begin{equation}\label{eq2.A}
M^\z_\xi(\la)=(1-\xi)^{zz'}\xi^{|\la|}(z)_\la(z')_\la\left(\frac{\dim\la}{|\la|!}\right)^2,
\qquad \la\in\Y.
\end{equation}
Note that $M^\z_\xi=M^{(z',z)}_\xi$. 
\end{definition}

\smallskip

Due to the admissibility assumption one has $zz'>0$ and the quantities
$$
(z)_\la(z')_\la=\prod_{\Box\in\la}(z+c(\Box))(z'+c(\Box))
$$
are real and strictly positive for all $\la$.  It follows that $M^\z_\xi(\la)$ is real
and strictly positive for every $\la\in\Y$. It is known that $M^\z_\xi$ is a
probability measure. 

The measures $M^\z_\xi$ were introduced in \cite{BO-2000} as `mixtures' of certain probability measures on the finite sets $\Y_n$ (partitions of $n=0,1,2,\dots$). This is a special (and in many respects distinguished) example of Schur measures \cite{Ok-2001a}. For more detail see \cite{BO-2005}, \cite{BO-2006a}, \cite{BO-2006b}. Note that those papers used the different notation $M_{z,z',\xi}$. 

 As explained in Section \ref{sect1}, we may treat $M^\z_\xi$ as a probability measure on $\Om=\{0,1\}^{\Z'}$. It is known that  
every mixed z-measure $M^\z_\xi$ is a determinantal measure. This result was first established in \cite[Proposition 3.1]{BO-2000}. Another proof is based on the fact that the measures $M^\z_\xi$ are a specal case of Schur measures, see \cite{Ok-2001a}, \cite{BOk-2000}. Yet other proofs are given in \cite{Ok-2001b}, \cite{BO-2006b}, \cite{BOS-2006}. 
Note that the methods of \cite{BO-2006b} and \cite{BOk-2000} lead to distinct correlation kernels (they differ by a `gauge transformation' that does not affect the correlation functions). 

\subsection{The limit measures $M^\z$}\label{sect2.2}
Fix an arbitrary admissible pair $\z$. Since the space $\Om$ is compact, the family $\{M^\z_\xi: 0<\xi<1\}$ is tight. Then it is natural to ask what happens as $\xi$ tends to one of the two boundary points of the interval $(0,1)$. It is readily seen that as $\xi\to0$,  the measures $M^\z_\xi$ weakly converge to the delta-measure at the point $\om(\varnothing)\in\Om$ corresponding to the empty diagram $\varnothing\in\Y$ (this point represents the subset $\Z'_{<0}\subset\Z'$). As $\xi\to1$, the picture is nontrivial. Namely, then the measures $M^\z_\xi$ weakly converge to a probability measure on $\Om$ (see \cite[Theorem 2.3]{BO-2005}). Unlike the pre-limit measures which are supported by $\Y\subset\Om$, the limit measure does not charge $\Y$ at all.  We denote this limit measure by $M^\z$ (in \cite{BO-2005} it was denoted by $\underline{P}^\GAMMA_{z,z'}$). 

\subsection{The gamma kernel $K^\z(x,y)$}

As above, we suppose that $\z$ is admissible. 
In \cite{BO-2005} it was shown that $M^\z$ is a determinantal measure and possesses a correlation kernel which is expressed through Euler's gamma function and called the \emph{gamma kernel}. For this reason $M^\z$ is called the \emph{gamma kernel measure}.

In \cite{BO-2005}, the gamma kernel was denoted by $\underline{K}^\GAMMA(x,y\mid z,z')$. Here we change the notation to $K^\z(x,y)$. Below we write down the explicit expression $K^\z(x,y)$.

The admissibility condition for $(z,z')$ implies that
$$
\Ga(x+z+\tfrac12)\Ga(x+z'+\tfrac12)>0 \qquad \forall x\in\Z'.
$$
Set
\begin{equation}\label{eq2.B}
A^\z(x):=\frac{\Ga(x+z+\tfrac12)}{\sqrt{\Ga(x+z+\tfrac12)\Ga(x+z'+\tfrac12)}},
\end{equation}
where we choose the positive value of the square root.  Note that
\begin{equation}\label{eq2.Binverse}
(A^\z(x))^{-1}:=\frac{\Ga(x+z'+\tfrac12)}{\sqrt{\Ga(x+z+\tfrac12)\Ga(x+z'+\tfrac12)}}=A^{(z',z)}(x).
\end{equation}

For $z\ne z'$ the kernel $K^\z(x,y)$  is given by the formulas 
\begin{equation}\label{eq2.C1}
K^\z(x,y)=\frac{\sin{\pi z}\sin(\pi z')}{\pi\sin(\pi(z-z'))}\,\dfrac{\dfrac{A^\z(x)}{A^\z(y)}-\dfrac{A^\z(y)}{A^\z(x)}}{x-y}, \quad x\ne y,
\end{equation} 
and
\begin{equation}\label{eq2.C2}
K^\z(x,x)=\frac{\sin(\pi z)\sin(\pi z')}{\pi\sin(\pi(z-z'))}\,(\psi(x+z+\tfrac12)-\psi(x+z'+\tfrac12)), \quad x\in\Z',
\end{equation}
where $\psi(a):=\Ga'(a)/\Ga(a)$ is the logarithmic derivative of the gamma function. Note that \eqref{eq2.C2} is obtained from \eqref{eq2.C1} by dropping the constraint $y\in\Z'$ and taking the limit $y\to x$; here we use the fact that the expression $A^\z(\ccdot)$ is well defined and is analytic in a sufficiently small neighbourhood of an arbitrary integral point. 

These formulas are extended to the case $z=z'=a\in\R\setminus\Z$ by continuity. Namely,
\begin{multline}\label{eq2.C3}
K^{(a,a)}(x,y)=
\left(\dfrac{\sin(\pi a)}{\pi}\right)^2\sgn(\Ga(x+a+\tfrac12)\sgn(\Ga(y+a+\tfrac12))\\
\times\frac{\psi(x+a+\tfrac12)-\psi(y+a+\tfrac12)}{x-y}, \qquad x\ne y,
\end{multline}
and
\begin{equation}\label{eq2.C4}
K^{(a,a)}(x,x)=
\left(\dfrac{\sin(\pi a)}{\pi}\right)^2\, \psi'(x+a+\tfrac12),
\end{equation}
where $\sgn(\ccdot)=\pm1$ is the sign of a nonzero real number.
(In \cite{BO-2005}, the sign factors in the above formula for $K^{(a,a)}(x,y)$ were erroneously missed.)

Evidently, $K^\z(x,y)=K^{(z',z)}(x,y)$. It is also readily seen that the gamma kernel is real and symmetric. 

A nontrivial (and fundamental) property of the gamma kernel is that it is a \emph{projection kernel} (see \cite[Theorem 5.6]{BO-2005} and \cite[\S4]{Ols-2008}). This means that the gamma kernel is the matrix of a selfadjoint projection operator on the Hilbert space $\ell^2(\Z')$. We denote that operator by $\K^\z$. Denoting the natural orthonormal basis of $\ell^2(\Z')$ by $\{e_x: x\in\Z'\}$ one can write
$$
K^\z(x,y)=(\K^\z e_y,e_x)=(\K^\z e_x,e_y), \qquad x,y\in\Z',
$$
where the first equation holds by the very definition, and the second equation follows from the fact that the gamma kernel is real.

\section{The basis vectors $g^\z_m$}\label{sect3}

\subsection{The modified gamma kernel}

If $M$ is a determinantal measure with a correlation kernel $K(x,y)$, then any kernel of the form $\phi(x)K(x,y)\phi^{-1}(y)$, where $\phi(\ccdot)$ is a nonvanishing function, serves as a correlation kernel for $M$ as well. 

We will need the \emph{modified gamma kernel}, which is defined by
\begin{equation}\label{eq3.A}
\wt K^\z(x,y)=\dfrac{A^\z(y)}{A^\z(x)}K^\z(x,y), \quad x,y\in\Z'.
\end{equation}
Note that in the particular case $z=z'=a\in\R\setminus\Z$ one has
$$
A^{(a,a)}(x)=\sgn(\Ga(x+a+\tfrac12)),
$$
and \eqref{eq3.A} turns into
\begin{equation}\label{eq3.A1}
\wt K^{(a,a)}(x,y)=\sgn(\Ga(x+a+\tfrac12))\sgn(\Ga(y+a+\tfrac12))\, K^{(a,a)}(x,y).
\end{equation}

The next proposition provides a series expansion for the modified gamma kernel. Below we set 
\begin{equation}\label{eq3.B1}
C(z,z'):=\frac{(z-z')\sin{\pi z}\sin(\pi z')}{\pi\sin(\pi(z-z'))}, \quad z\ne z',
\end{equation}
and 
\begin{equation}\label{eq3.B2}
C(a,a):=\left(\dfrac{\sin(\pi a)}{\pi}\right)^2, \quad a\in\R\setminus\Z.
\end{equation}

\begin{proposition}\label{prop3.A}
For any $x,y\in\Z'$ one has
\begin{equation}\label{eq3.B}
\wt K^\z(x,y)=C(z,z')\sum_{m=0}^\infty\frac{\Ga(x+z'+\tfrac12+m)}{\Ga(x+z+\tfrac32+m)} \frac{\Ga(y+z+\tfrac12+m)}
{\Ga(y+z'+\tfrac32+m)}, 
\end{equation}
where the series on the right-hand side is absolutely converging. 
\end{proposition}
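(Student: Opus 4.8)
The plan is to reduce both sides to explicit ratios of gamma functions and then recognize the series on the right as a telescoping sum whose boundary terms reproduce the closed form of the left-hand side. First I would put $\wt K^\z$ into closed form. Substituting \eqref{eq2.C1} into the definition \eqref{eq3.A} and using $A^\z(x)^2=\Ga(x+z+\tfrac12)/\Ga(x+z'+\tfrac12)$, which follows from \eqref{eq2.B}, the prefactor $A^\z(y)/A^\z(x)$ kills one of the two terms in the numerator of \eqref{eq2.C1} and turns the other into a square, leaving
\[
\wt K^\z(x,y)=\frac{\sin\pi z\,\sin\pi z'}{\pi\sin\pi(z-z')}\cdot\frac1{x-y}\left(1-\frac{\Ga(x+z'+\tfrac12)\,\Ga(y+z+\tfrac12)}{\Ga(x+z+\tfrac12)\,\Ga(y+z'+\tfrac12)}\right),\qquad x\ne y.
\]
This closed form, times $C(z,z')$, is the target value of the series.

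Next I would analyze the series. Writing $T_m$ for the $m$-th summand, I introduce the auxiliary quantity
\[
U_m:=\frac{\Ga(x+z'+\tfrac12+m)}{\Ga(x+z+\tfrac12+m)}\cdot\frac{\Ga(y+z+\tfrac12+m)}{\Ga(y+z'+\tfrac12+m)},
\]
so that $T_m=U_m\big/\big[(x+z+\tfrac12+m)(y+z'+\tfrac12+m)\big]$. The functional equation of the gamma function gives $U_{m+1}/U_m=\tfrac{(x+z'+\frac12+m)(y+z+\frac12+m)}{(x+z+\frac12+m)(y+z'+\frac12+m)}$, and the crucial input is the elementary algebraic identity
\[
(x+z+\tfrac12+m)(y+z'+\tfrac12+m)-(x+z'+\tfrac12+m)(y+z+\tfrac12+m)=(z-z')(y-x),
\]
obtained by direct expansion (all $m$- and $x,y$-dependence cancels). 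Combining the two displays yields exactly $T_m=\frac{1}{(z-z')(y-x)}\,(U_m-U_{m+1})$, i.e. the series telescopes.

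Then I would sum it. From the standard asymptotics $\Ga(c+m)/\Ga(d+m)\sim m^{c-d}$ one gets $U_m\to1$ and $T_m=O(m^{-2})$ as $m\to\infty$ (in the principal series $z'-z$ is purely imaginary, so the relevant powers of $m$ have modulus $m^{-1}$); this proves the asserted absolute convergence and evaluates the telescoping sum as $\sum_{m\ge0}T_m=\frac{1}{(z-z')(y-x)}(U_0-1)$. Multiplying by $C(z,z')$ from \eqref{eq3.B1} cancels the factor $z-z'$ and, after flipping $y-x$ to $x-y$, reproduces the closed form for $\wt K^\z(x,y)$ displayed above, settling the case $x\ne y$. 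The diagonal $x=y$ follows by continuity in $y$: the quotient $(U_0-1)/(y-x)$ tends to $\psi(x+z+\tfrac12)-\psi(x+z'+\tfrac12)$, matching \eqref{eq2.C2}, just as \eqref{eq2.C2} is obtained from \eqref{eq2.C1}. Finally, since the telescoping constant carries $z-z'$ in the denominator, the coincident case $z=z'=a$ is handled by letting $z'\to z$, using the built-in continuity of $C(z,z')$ recorded in \eqref{eq3.B1}--\eqref{eq3.B2}.

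The main obstacle is spotting the telescoping structure — namely guessing the auxiliary sequence $U_m$ and verifying that $T_m$ is its successive difference up to the constant $(z-z')(y-x)$ — together with a careful justification that the boundary term satisfies $\lim_{m}U_m=1$ from the gamma asymptotics. Once the key algebraic identity above is in place, the remaining steps are routine bookkeeping with the functional equation and the limit transitions for $x=y$ and $z=z'$.
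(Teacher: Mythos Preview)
Your proof is correct and is essentially the same telescoping argument as the paper's: the paper first isolates the summation as Lemma~\ref{lemma3.A} (a closed form for $\sum_{m\ge0}\Ga(a+m)\Ga(b+m)/[\Ga(c+m+1)\Ga(d+m+1)]$ under the balancing condition $a+b=c+d$) and proves it by checking that the claimed value $F(a,b,c,d)$ satisfies $F(a,b,c,d)-F(a+1,b+1,c+1,d+1)=\Ga(a)\Ga(b)/[\Ga(c+1)\Ga(d+1)]$ together with $F(a+m,\dots)\to0$, which in your notation is exactly $T_m=\tfrac1{(z-z')(y-x)}(U_m-U_{m+1})$ and $U_m\to1$. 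The treatment of the degenerate cases $x=y$ and $z=z'$ by continuity is also the same.
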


Note that in the particular case when $z=z'=a$, where $a\in\R\setminus\Z$, the formula is simplified:
\begin{equation}\label{eq3.C1}
\wt K^{(a,a)}(x,y)=C(a,a)\sum_{m=0}^\infty \frac1{(x+a+\tfrac12)(y+a+\tfrac12)}. 
\end{equation}

Proposition \ref{prop3.A} will be derived from the following lemma.

\begin{lemma}\label{lemma3.A}
Let $a,b,c,d$ be complex parameters subject to two constraints:  $a,b\notin\Z_{\le0}$ and $a+b=c+d$. The following summation formula holds 
\begin{equation}\label{eq3.D}
\sum_{m=0}^\infty\frac{\Ga(a+m)\Ga(b+m)}{\Ga(c+m+1)\Ga(d+m+1)}=\frac{\Ga(c)\Ga(d)-\Ga(a)\Ga(b)}{(a-c)(b-c)\Ga(c)\Ga(d)}.
\end{equation}
\end{lemma}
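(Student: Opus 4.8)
The plan is to prove \eqref{eq3.D} by exhibiting the summand as a telescoping difference, with the hypothesis $a+b=c+d$ doing all the work. I would introduce the auxiliary quantity
$$
v_m:=\frac{\Ga(a+m)\Ga(b+m)}{\Ga(c+m)\Ga(d+m)}, \qquad m=0,1,2,\dots,
$$
which is well defined for every $m\ge0$ since $a,b\notin\Z_{\le0}$. The first step is to compute $v_m-v_{m+1}$: using the functional equation $\Ga(s+1)=s\,\Ga(s)$ to align the arguments, one finds
$$
v_m-v_{m+1}=\frac{\Ga(a+m)\Ga(b+m)}{\Ga(c+m+1)\Ga(d+m+1)}\bigl[(c+m)(d+m)-(a+m)(b+m)\bigr].
$$
Expanding the bracket, the $m^2$-terms cancel automatically and the linear terms cancel \emph{precisely} because $a+b=c+d$, leaving the constant $cd-ab$. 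Hence, whenever $cd\ne ab$, the $m$-th summand in \eqref{eq3.D} equals $(cd-ab)^{-1}(v_m-v_{m+1})$.

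Next I would sum the resulting telescoping series. The partial sums are $(cd-ab)^{-1}(v_0-v_{N+1})$, so everything reduces to identifying $v_0$ and $\lim_{m}v_m$. Clearly $v_0=\Ga(a)\Ga(b)/(\Ga(c)\Ga(d))$. For the tail I would invoke the classical ratio asymptotics $\Ga(m+\al)/\Ga(m+\be)\sim m^{\al-\be}$ as $m\to\infty$, which give $v_m\sim m^{(a-c)+(b-d)}=m^{(a+b)-(c+d)}$; by $a+b=c+d$ the exponent is $0$ and therefore $v_m\to1$. Consequently the series converges and
$$
\sum_{m=0}^\infty\frac{\Ga(a+m)\Ga(b+m)}{\Ga(c+m+1)\Ga(d+m+1)}=\frac{v_0-1}{cd-ab}=\frac{1}{cd-ab}\left(\frac{\Ga(a)\Ga(b)}{\Ga(c)\Ga(d)}-1\right).
$$

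It then remains to match this with the right-hand side of \eqref{eq3.D} by a one-line algebraic identity. Using $a+b=c+d$ one checks
$$
(a-c)(b-c)=ab-c(a+b)+c^2=ab-c(c+d)+c^2=ab-cd,
$$
so that $cd-ab=-(a-c)(b-c)$. Substituting this and rewriting $1-v_0=(\Ga(c)\Ga(d)-\Ga(a)\Ga(b))/(\Ga(c)\Ga(d))$ turns the displayed expression into $\dfrac{\Ga(c)\Ga(d)-\Ga(a)\Ga(b)}{(a-c)(b-c)\,\Ga(c)\Ga(d)}$, which is exactly \eqref{eq3.D}. The same identity also gives $v_m-v_{m+1}=O(m^{-2})$, so each summand is $O(m^{-2})$, confirming the absolute convergence later used in Proposition \ref{prop3.A}.

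The main obstacle here is conceptual rather than computational: one must correctly evaluate $\lim_m v_m=1$, since the naive guess that the telescoping tail vanishes is \emph{wrong}, and it is the single balance condition $a+b=c+d$ that simultaneously forces the collapse of the quadratic in the first step and the finite nonzero limit in the second. The only genuinely excluded case is $cd=ab$, which together with $a+b=c+d$ forces $\{a,b\}=\{c,d\}$; there both sides of \eqref{eq3.D} take the indeterminate form $0/0$, and the identity is then recovered by continuity in the parameters $a,b,c,d$ along the hyperplane $a+b=c+d$.
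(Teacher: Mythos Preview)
Your proof is correct and is essentially the same telescoping argument as the paper's, just organized from the opposite end: the paper sets $F(a,b,c,d)$ equal to the right-hand side and verifies $F(a,b,c,d)-F(a+1,b+1,c+1,d+1)=\dfrac{\Ga(a)\Ga(b)}{\Ga(c+1)\Ga(d+1)}$ together with $F(a+m,\dots)\to0$, whereas you introduce $v_m=\Ga(a+m)\Ga(b+m)/(\Ga(c+m)\Ga(d+m))$ and telescope the summand directly. Since $F(a+m,b+m,c+m,d+m)=(v_m-1)/(cd-ab)$, the two computations are the same identity viewed from the RHS versus the LHS; both hinge on the same asymptotic $\Ga(r+\al)/\Ga(r+\be)\sim r^{\al-\be}$ and both dispose of the degenerate case $(a-c)(b-c)=ab-cd=0$ by continuity.
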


\noindent\textbf{Comments.}  1. The series on the left absolutely converges. To see this,  use the asymptotic formula (\cite[Section 1.18, (4)]{Er})
\begin{equation}\label{eq3.E}
\frac{\Ga(r+\al)}{\Ga(r+\be}\sim r^{\al-\be} \qquad \text{as $r\to+\infty$}.
\end{equation}
From the assumption $a+b=c+d$ it follows that the $m$th term is $O(m^{-2})$. 

2. Formula \eqref{eq3.D} resembles Dougall's summation formula for the bilateral series  ${}_2H_2(1)$ (see \cite[Section 1.4, (1)]{Er} and \eqref{eq3.Dougall} below). Note the crucial r\^ole of the condition $a+b=c+d$; without it, the unilateral series on the left-hand side does not admit a closed expression. 

3. From $a+b=c+d$ it follows that 
\begin{equation}\label{eq3.D1}
(a-c)(b-c)=(a-d)(b-d)=ab-cd. 
\end{equation}
This shows that the right-hand side of \eqref{eq3.D} is symmetric with respect to the switching $c\leftrightarrow d$. 

4. If \eqref{eq3.D1} vanishes, then the  arising indeterminacy $0/0$ on the right-hand side of \eqref{eq3.D} is resolved by continuity.   

5. The left-hand side of \eqref{eq3.D} equals
$$
\frac{\Ga(a)\Ga(b)}{\Ga(c+1)\Ga(d+1)}\,{}_3F_2 (a,b,1; c+1,d+1;1),
$$
so that \eqref{eq3.D} is equivalent to  
$$
{}_3F_2 (a,b,1; c+1,d+1;1)=\frac{cd}{ab-cd}\,\frac{\Ga(c)\Ga(d)-\Ga(a)\Ga(b))}{\Ga(a)\Ga(b)} \qquad \text{for $a+b=c+d$}.
$$
This formula is contained in the handbook \cite{PBM} (formula 7.4.4.28). Unfortunately, \cite{PBM} does not give a reference and we did not manage to find a suitable source. So we give a simple direct proof. 

\begin{proof}[Proof of Lemma \ref{lemma3.A}] 
Suppose first that $(a-c)(b-c)$ does not vanish. Denoting the right-hand side of \eqref{eq3.D} by $F(a,b,c,d)$, it suffices to prove that 
\begin{equation*}
F(a,b,c,d)-F(a+1,b+1,c+1,d+1)=\frac{\Ga(a)\Ga(b)}{\Ga(c+1)\Ga(d+1)}
\end{equation*}
and
\begin{equation*}
\lim_{m\to+\infty}F(a+m,b+m,c+m,d+m)=0.
\end{equation*}
The first relation is verified directly using the basic assumption $a+b=c+d$. The second relation follows from \eqref{eq3.E}. 

To drop the assumption $(a-c)(b-c)\ne0$ we use the fact that the left-hand side of \eqref{eq3.D} is continuous as a function of the parameters.  
\end{proof}

\begin{proof}[Proof of Proposition \ref{prop3.A}]
We apply Lemma \ref{lemma3.A}. 
The series on the right-hand side of \eqref{eq3.B} is of the same form as in \eqref{eq3.D}, with
\begin{equation}\label{eq3.G}
a=x+z'+\tfrac12, \quad b=y+z+\tfrac12, \quad c=x+z+\tfrac12, \quad d=y+z'+\tfrac12.
\end{equation}
It remains to check that the right-hand side of \eqref{eq3.D} matches the definition \eqref{eq3.A} of the modified kernel $\wt K^\z$. 

Suppose first that $x\ne y$ and $z\ne z'$. From \eqref{eq3.A} and \eqref{eq2.C1} it follows that
$$
\wt K^\z(x,y)=\frac{\sin{\pi z}\sin(\pi z')}{\pi\sin(\pi(z-z'))}\,\dfrac{1-\dfrac{(A^\z(y))^2}{(A^\z(x))^2}}{x-y}.
$$
Next, from \eqref{eq2.B} it follows that
$$
(A^\z(x))^2=\frac{\Ga(x+z+\tfrac12)}{\Ga(x+z'+\tfrac12)}.
$$
Therefore, 
\begin{multline*}
\wt K^\z(x,y)=\frac{\sin{\pi z}\sin(\pi z')}{\pi\sin(\pi(z-z'))}\,\dfrac{1-\dfrac{\Ga(x+z'+\tfrac12)\Ga(y+z+\tfrac12)}{\Ga(x+z+\tfrac12)\Ga(y+z'+\tfrac12)}}{x-y}\\
=\frac{\sin{\pi z}\sin(\pi z')}{\pi\sin(\pi(z-z'))}\,\dfrac{1-\dfrac{\Ga(a)\Ga(b)}{\Ga(c)\Ga(d)}}{x-y}=C(z,z')\, \dfrac{1-\dfrac{\Ga(a)\Ga(b)}{\Ga(c)\Ga(d)}}{(z-z')(x-y)},
\end{multline*}
where the second equality follows from \eqref{eq3.G} and the third equality follows from the definition of $C(z,z')$, see \eqref{eq3.B1}.  Finally, from \eqref{eq3.G} it follows that $(z-z')(x-y)=(a-c)(b-c)$. 

This completes the proof in the case when $x\ne y$ and $z\ne z'$. Then these constraints can be removed, because the formulas \eqref{eq2.C2}, \eqref{eq2.C3}, \eqref{eq2.C4} for the gamma kernel are obtained from the basic formula \eqref{eq2.C1} by continuity.
\end{proof}

\subsection{The key inductive relations}
Proposition \ref{prop3.A} allows us to establish relations between the kernels with parameters $\z$ and $(z+m,z'+m)$, which will play a key r\^ole in what follows. 

Introduce the following functions on $\Z'$ depending on admissible parameters $\z$ and indexed by $m\in\Z$:
\begin{equation}\label{eq3.h}
h^\z_m(x):=\frac{\sin(\pi z')\Ga(1+z-z')}{\pi}\,\frac{\Ga(x+z'+m+\tfrac12)}{\Ga(x+z+m+\tfrac32)},
\end{equation}
\begin{multline}\label{eq3.g}
g^\z_m(x):=A^\z(x) h^\z(x)\\
=\frac{\sin(\pi z')\Ga(1+z-z')}{\pi}\, \frac{\Ga(x+z+\tfrac12)}{\sqrt{\Ga(x+z+\tfrac12)\Ga(x+z'+\tfrac12)}} \,\frac{\Ga(x+z'+m+\tfrac12)}{\Ga(x+z+m+\tfrac32)}.
\end{multline}

We are mainly interested in the functions $g^\z_m$, while the functions $h^\z_m$ serve as auxiliary objects.

\begin{lemma}\label{cor3.D-h}
The following two relations hold for the functions $h^\z_m$.
\begin{equation}\label{eq3.r1}
\wt K^\z(x,y)=\sum_{m=0}^\infty h^\z_m(x)h^{(z',z)}_m(y),
\end{equation}
\begin{equation}\label{eq3.r3}
\wt K^\z(x,y)=\sum_{i=0}^{m-1} h^\z_i(x)h^{(z',z)}_i(y)+\wt K^{(z+m,z'+m)}(x,y), \qquad m=1,2,\dots,
\end{equation}
where the series in \eqref{eq3.r1} is absolutely convergent.
\end{lemma}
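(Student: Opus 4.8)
The plan is to deduce both relations from Proposition \ref{prop3.A}, treating \eqref{eq3.r1} as the substantive identity and \eqref{eq3.r3} as a bookkeeping consequence of it.

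First I would establish \eqref{eq3.r1}. Writing out the product $h^\z_m(x)\,h^{(z',z)}_m(y)$ from the definition \eqref{eq3.h}, the two gamma-ratio factors combine into exactly the $m$th summand in Proposition \ref{prop3.A}, namely
$$
\frac{\Ga(x+z'+\tfrac12+m)}{\Ga(x+z+\tfrac32+m)}\,\frac{\Ga(y+z+\tfrac12+m)}{\Ga(y+z'+\tfrac32+m)}.
$$
It then remains to check that the product of the two $m$-independent prefactors equals the constant $C(z,z')$ of \eqref{eq3.B1}. This prefactor is $\dfrac{\sin(\pi z)\sin(\pi z')}{\pi^2}\,\Ga(1+z-z')\,\Ga(1+z'-z)$, and the reflection formula $\Ga(1+w)\Ga(1-w)=\pi w/\sin(\pi w)$ with $w=z-z'$ turns $\Ga(1+z-z')\Ga(1+z'-z)$ into $\pi(z-z')/\sin(\pi(z-z'))$, which immediately reproduces $C(z,z')$. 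Absolute convergence of the series is inherited verbatim from Proposition \ref{prop3.A}, and the degenerate case $z=z'=a$ is handled by continuity, just as in the passage from \eqref{eq3.B} to \eqref{eq3.C1}.

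Next I would obtain \eqref{eq3.r3} by splitting the series \eqref{eq3.r1} after its first $m$ terms; the only point to verify is that the tail equals $\wt K^{(z+m,z'+m)}(x,y)$. Since $(z+m,z'+m)$ is again admissible, applying \eqref{eq3.r1} to these shifted parameters gives
$$
\wt K^{(z+m,z'+m)}(x,y)=\sum_{j=0}^\infty h^{(z+m,z'+m)}_j(x)\,h^{(z'+m,z+m)}_j(y).
$$
The key elementary identity is the index shift $h^{(z+m,z'+m)}_j=(-1)^m\,h^\z_{m+j}$, which follows from $\sin(\pi(z'+m))=(-1)^m\sin(\pi z')$ together with the invariance of $z-z'$ and of the gamma-ratio under simultaneously shifting all arguments by $m$. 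The analogous identity holds with $(z,z')$ swapped, and the two factors $(-1)^m$ cancel in the product, so the right-hand side above becomes $\sum_{i=m}^\infty h^\z_i(x)\,h^{(z',z)}_i(y)$, which is precisely the tail I split off.

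The calculation carries no genuine analytic obstacle: convergence is already supplied by Proposition \ref{prop3.A}, and everything else is the reflection formula combined with the parity of $\sin$ under integer shifts. The one point demanding care is the sign bookkeeping in the shift identity $h^{(z+m,z'+m)}_j=(-1)^m h^\z_{m+j}$, since it is exactly the cancellation of the two $(-1)^m$ factors that makes the telescoping in \eqref{eq3.r3} clean; an error there would spoil the identity for odd $m$.
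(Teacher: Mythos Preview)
Your proposal is correct and follows essentially the same route as the paper: you derive \eqref{eq3.r1} from Proposition~\ref{prop3.A} by verifying via the reflection formula that the product of prefactors equals $C(z,z')$, and then obtain \eqref{eq3.r3} from the shift identity $h^{(z+m,z'+m)}_j=(-1)^m h^\z_{m+j}$ with the two sign factors cancelling. The paper's argument is the same, only stating the shift identity in the equivalent inverse form $h^\z_{m+k}=(-1)^m h^{(z+m,z'+m)}_k$.
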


\begin{corollary}\label{cor3.D}
The following two relations hold for the functions $g^\z_m$.

\begin{equation}\label{eq3.r2}
K^\z(x,y)=\sum_{m=0}^\infty g^\z_m(x)g^{(z',z)}_m(y),
\end{equation}
\begin{multline}\label{eq3.r4}
K^\z(x,y)=\sum_{i=0}^{m-1} g^\z_i(x)g^{(z',z)}_i(y)\\
+ \frac{A^\z(x) A^{(z+m,z'+m)}(y)}{A^{(z+m,z'+m)}(x) A^\z(y)}\, K^{(z+m,z'+m)}(x,y), \qquad m=1,2,\dots,
\end{multline}
where the series in \eqref{eq3.r2} is absolutely convergent.
\end{corollary}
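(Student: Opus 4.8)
The plan is to derive Corollary \ref{cor3.D} directly from Lemma \ref{cor3.D-h} by undoing the gauge transformation that relates the modified gamma kernel $\wt K^\z$ to the true gamma kernel $K^\z$. Recall from \eqref{eq3.A} that $\wt K^\z(x,y)=A^\z(x)^{-1}A^\z(y)K^\z(x,y)$, so conversely $K^\z(x,y)=A^\z(x)A^\z(y)^{-1}\wt K^\z(x,y)$. The strategy is to multiply each of \eqref{eq3.r1} and \eqref{eq3.r3} through by the factor $A^\z(x)/A^\z(y)$ and to track how this factor distributes across the individual terms of the right-hand sides.

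First I would treat the infinite-series relation \eqref{eq3.r2}. Multiplying \eqref{eq3.r1} by $A^\z(x)/A^\z(y)$ on both sides, the left-hand side becomes exactly $K^\z(x,y)$. On the right, the $m$th summand $h^\z_m(x)h^{(z',z)}_m(y)$ acquires the factor $A^\z(x)/A^\z(y)$. Here I would use the definition \eqref{eq3.g}, namely $g^\z_m(x)=A^\z(x)h^\z_m(x)$, together with the key observation \eqref{eq2.Binverse} that $(A^\z(y))^{-1}=A^{(z',z)}(y)$. Thus $A^\z(x)h^\z_m(x)=g^\z_m(x)$ absorbs the $A^\z(x)$ factor, while $A^\z(y)^{-1}h^{(z',z)}_m(y)=A^{(z',z)}(y)h^{(z',z)}_m(y)=g^{(z',z)}_m(y)$ absorbs the $A^\z(y)^{-1}$ factor, the last equality being \eqref{eq3.g} applied to the swapped parameter pair $(z',z)$. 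This yields \eqref{eq3.r2} term by term, and absolute convergence is inherited directly from that of \eqref{eq3.r1}.

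The relation \eqref{eq3.r4} is handled by the same multiplication applied to \eqref{eq3.r3}. The finite sum $\sum_{i=0}^{m-1}h^\z_i(x)h^{(z',z)}_i(y)$ transforms into $\sum_{i=0}^{m-1}g^\z_i(x)g^{(z',z)}_i(y)$ exactly as above. The new feature is the remainder term $\wt K^{(z+m,z'+m)}(x,y)$: multiplying it by $A^\z(x)/A^\z(y)$ and inserting $1=A^{(z+m,z'+m)}(x)^{-1}A^{(z+m,z'+m)}(x)=A^{(z+m,z'+m)}(y)A^{(z+m,z'+m)}(y)^{-1}$, I would rewrite
\begin{equation*}
\frac{A^\z(x)}{A^\z(y)}\,\wt K^{(z+m,z'+m)}(x,y)
=\frac{A^\z(x)\,A^{(z+m,z'+m)}(y)}{A^{(z+m,z'+m)}(x)\,A^\z(y)}\cdot\frac{A^{(z+m,z'+m)}(x)}{A^{(z+m,z'+m)}(y)}\,\wt K^{(z+m,z'+m)}(x,y).
\end{equation*}
By \eqref{eq3.A} applied with parameters $(z+m,z'+m)$, the last two factors combine to give precisely $K^{(z+m,z'+m)}(x,y)$, which reproduces the remainder in \eqref{eq3.r4}.

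The computation is essentially bookkeeping, so there is no genuine obstacle; the only point requiring care is the consistent use of \eqref{eq2.Binverse} to identify the reciprocal gauge factor $A^\z(y)^{-1}$ with $A^{(z',z)}(y)$, which is what forces the parameter swap $(z,z')\mapsto(z',z)$ in the second factor of each summand and explains the asymmetric appearance of $g^\z$ and $g^{(z',z)}$ in \eqref{eq3.r2} and \eqref{eq3.r4}. I would also briefly note that the gauge factors are nonvanishing on all of $\Z'$ by the admissibility assumption, so that the multiplications and insertions of $1$ are all legitimate.
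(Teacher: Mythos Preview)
Your proposal is correct and follows exactly the same approach as the paper: multiply \eqref{eq3.r1} and \eqref{eq3.r3} by the gauge factor $A^\z(x)/A^\z(y)=A^\z(x)A^{(z',z)}(y)$ (using \eqref{eq2.Binverse}), then use the definitions \eqref{eq3.g} and \eqref{eq3.A}. Your treatment of the remainder term in \eqref{eq3.r4} is slightly more explicit than the paper's, which simply says ``use again \eqref{eq3.A}'', but the content is identical.
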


Corollary \ref {cor3.D} will play the main r\^ole in the proof of Theorem \ref{thm3.B} below.

\begin{proof}[Proof of Lemma  \ref{cor3.D-h} and Corollary \ref{cor3.D}.]
Observe that 
\begin{equation}\label{eq3.delta}
\frac{\sin(\pi z')\Ga(1+z-z')}{\pi}\cdot \frac{\sin(\pi z)\Ga(1+z'-z)}{\pi}=C(z,z').
\end{equation}
Indeed,
$$
\Ga(1+z-z')\Ga(1+z'-z)=(z-z')\Ga(z-z')\Ga(1+z'-z)=\frac{\pi (z-z')}{\sin(\pi(z-z'))},
$$
where the last equality follows from Euler's reflection formula \eqref{eq3.L}. Comparing with the definition of $C(z,z')$ (see \eqref{eq3.B1}) we obtain \eqref{eq3.delta}. 

Taking account of \eqref{eq3.delta} and the definition \eqref{eq3.h}, we see that \eqref{eq3.r1} is simply a reformulation of the result of Proposition \ref{prop3.A}.

The relation \eqref{eq3.r2}  follows from \eqref{eq3.r1}. To see this, we multiply both sides of \eqref{eq3.r1} by 
\begin{equation}\label{eq3.factor}
\frac{A^\z(x)}{A^\z(y)}=A^\z(x)A^{(z',z)}(y)
\end{equation}
(the last equality follows from \eqref{eq2.Binverse}) and take account of the link between $\wt K^\z(x,y)$ and $K^\z(x,y)$ (see \eqref{eq3.A}). 

The relation \eqref{eq3.r3} follows from \eqref{eq3.r1} and the fact that
\begin{equation*}
h^\z_{m+k}(x)=(-1)^mh^{(z+m,z'+m)}_k(x), \qquad k, m\in\Z,
\end{equation*}
as it is seen from the definition \eqref{eq3.h}. 

Finally, \eqref{eq3.r4} follows from \eqref{eq3.r3}: we multiply both sides of \eqref{eq3.r3} by \eqref{eq3.factor} and use again \eqref{eq3.A}. 
\end{proof}

\subsection{Orthogonality and biorthogonality}

In the case of principal series, when $z$ and $z'$ are complex conjugated,  the functions $g^\z_m=g^{(z,\bar z)}$ are complex-valued and satisfy the relation
\begin{equation}\label{eq3.bar}
\overline{g^{(z,\bar z)}(x)}=g^{(\bar z, z)}(x).
\end{equation}
In the case of complementary series, when $z$ and $z'$ are real, the functions $g^\z(x)$ are real, too. 

\begin{proposition}\label{prop3.C}
The functions $g^\z_m(x)$, $m\in\Z$,  belong to $\ell^2(\Z')$ and satisfy the biorthogonality relation
\begin{equation}\label{eq3.biorth}
\sum_{x\in\Z'}g^\z_m(x) g^{(z',z)}_n(x)=\begin{cases} 1, & m=n,\\ 0, & m\ne n. \end{cases}
\end{equation}
\end{proposition}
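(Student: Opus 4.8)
The plan is to reduce the statement to a single bilateral summation identity for ratios of gamma functions and to evaluate that sum by Dougall's theorem for the series ${}_2H_2(1)$. The advantage of this route is that, after the reduction, nothing distinguishes the principal from the complementary series, so a single computation covers both.

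First I would dispose of the gauge factors. By \eqref{eq2.Binverse} one has $A^{(z',z)}(x)=(A^\z(x))^{-1}$, so in the product $g^\z_m(x)\,g^{(z',z)}_n(x)$ the factors $A^\z(x)$ and $A^{(z',z)}(x)$ cancel and
$$
g^\z_m(x)\,g^{(z',z)}_n(x)=h^\z_m(x)\,h^{(z',z)}_n(x),\qquad x\in\Z'.
$$
Hence the biorthogonality relation \eqref{eq3.biorth} is equivalent to $\sum_{x\in\Z'}h^\z_m(x)h^{(z',z)}_n(x)=\delta_{mn}$, which is a pure gamma-function identity. To settle membership in $\ell^2(\Z')$ and absolute convergence of the sum, I would apply \eqref{eq3.E} as $x\to+\infty$ and \eqref{eq3.E} together with Euler's reflection formula as $x\to-\infty$, obtaining $|g^\z_m(x)|\asymp |x|^{\Re(z'-z)/2-1}$ as $|x|\to\infty$, with an exponent independent of $m$ and of the direction. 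In the principal series $\Re(z'-z)=0$, whence $|g^\z_m(x)|^2=O(|x|^{-2})$; in the complementary series $z'-z\in(-1,1)$, whence $|g^\z_m(x)|^2=O(|x|^{z'-z-2})$ with exponent $<-1$. In both cases $g^\z_m\in\ell^2(\Z')$, and the same holds for $g^{(z',z)}_m$ by the symmetry $z\leftrightarrow z'$; absolute convergence of the sum in \eqref{eq3.biorth} then follows from the Cauchy--Schwarz inequality.

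It remains to evaluate the sum. Putting $t:=x+\tfrac12\in\Z$ and factoring out the $t=0$ term turns $\sum_{x}h^\z_m(x)h^{(z',z)}_n(x)$, up to the explicit constant $C(z,z')$ (recall \eqref{eq3.delta}) and a ratio of four gamma values, into the bilateral series ${}_2H_2(z'+m,\,z+n;\,z+m+1,\,z'+n+1;\,1)$. Here the four parameters satisfy $c+d-a-b=2$, so the convergence hypothesis $c+d-a-b-1=1>0$ of Dougall's formula \eqref{eq3.Dougall} is met. Applying it, cancelling the factors $\Ga(z+m+1)$ and $\Ga(z'+n+1)$ against the prefactor, and using Euler's reflection formula three times (for the pairs involving $z'+m$, $z+n$, and $z-z'$) collapses the whole expression to
$$
(-1)^{m+n}\,\frac1{\Ga(n-m+1)\,\Ga(m-n+1)} .
$$
For $m=n$ this equals $1$; for $m\neq n$ exactly one of the two gamma factors in the denominator has an argument in $\Z_{\le0}$, so its reciprocal vanishes and the expression is $0$. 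This is precisely $\delta_{mn}$.

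I expect the main obstacle to be the bookkeeping in this last step: one must track the parity factors $(-1)^m$ produced by $\sin(\pi(z'+m))=(-1)^m\sin(\pi z')$ and its analogue for $z+n$, combine them correctly with $C(z,z')$, and verify that the poles of $\Ga(n-m+1)\Ga(m-n+1)$ reproduce the Kronecker delta. A secondary point needing care is that the convergence of the bilateral ${}_2H_2$ is borderline — the parameter balance is exactly $2$, so each tail is only $O(|t|^{-2})$ — and one should confirm that Dougall's theorem applies directly rather than after a limiting argument.
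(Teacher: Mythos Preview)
Your proposal is correct and follows essentially the same route as the paper's own proof: both establish the $\ell^2$ bounds via \eqref{eq3.E} and Euler reflection, then apply Dougall's formula \eqref{eq3.Dougall} with the parameters $a=z'+m$, $b=z+n$, $c=z+m+1$, $d=z'+n+1$. Your explicit cancellation of the gauge factors $A^\z A^{(z',z)}=1$ and your tracking of the sign $(-1)^{m+n}$ are minor refinements (the paper suppresses that sign, harmlessly, since it equals $1$ when $m=n$ and the expression vanishes otherwise); and your convergence worry is unfounded, since $c+d-a-b=2>1$ places you strictly inside Dougall's domain of validity.
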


\begin{proof}
Let us check the square summability. 

In the case of the principal series we have
$$
|g^{(z,\bar z)}_m(x)|^2= O(|x|^{-2}), \quad |x|\gg0.
$$
Indeed, for $x\gg0$ this is seen from the definition \eqref{eq3.g} and the asymptotic formula \eqref{eq3.E}. 
The case $x\ll0$ is reduced to the case $x\gg0$ with the aid of Euler's reflection formula (\cite[Section 1.2, (6)]{Er})
\begin{equation}\label{eq3.L}
\Ga(\al)\Ga(1-\al)=\frac{\pi}{\sin(\pi\al)}.
\end{equation}
In the case of the complementary series we obtain in the same way 
$$
(g^\z_m(x))^2=O(|x|^{-2+z'-z}),  \quad |x|\gg0.
$$

These bounds guarantee that $g^\z_m\in\ell^2(\Z')$ in both cases. Note that in the case of the complementary series we have used the fact that $|z'-z|<1$. Note also that that in this case  the functions $g^\z_m(x)$ and $g^{(z',z)}_m(x)$ exhibit different order of decay at infinity.  

To prove the biorthogonality relation \eqref{eq3.biorth} we use Dougall's summation formula for  the two-sided hypergeometric series ${}_2 H_2(1)$ (\cite[Section 1.4, (1)]{Er}):
\begin{equation}\label{eq3.Dougall}
\sum_{k\in\Z}\frac{\Ga(k+a)\Ga(k+b)}{\Ga(k+c)\Ga(k+d)}=\frac{\pi^2}{\sin(\pi a)\sin(\pi b)}\, \frac{\Ga(c+d-a-b-1)}{\Ga(c-a)\Ga(c-b)\Ga(d-a)\Ga(d-b)}.
\end{equation}
This formula holds for any complex parameters $a,b,c,d$ such that $a,b$ are not integral and $\Re(c+d-a-b)>1$. We apply it to
$$
a:=m+z', \quad b:=n+z, \quad c:=m+z+1, \quad d:=n+z'+1
$$
and set $k:=x+\tfrac12$. The required constraints on the parameters are satisfied both for the principal and the complementary series,  and we obtain from \eqref{eq3.Dougall} that the sum on the left-hand side of \eqref{eq3.biorth} equals 
\begin{equation*}
\frac{1}{\Ga(n-m+1)\Ga(m-n+1)}.
\end{equation*}
This expression vanishes for $m\ne n$ and equals $1$ for $m=n$, as desired. 
\end{proof}

\section{The projections $\K^\z$}

\subsection{A link between $\K^\z$ and $\K^{(z+m,z'+m)}$}

Here we explore the relation \eqref{eq3.r4}. Let $V^\z_m$ stand for the operator  of multiplication by the function $A^\z(x)/A^{(z+m,z'+m)}(x)$. From \eqref{eq2.B} it is seen that this function is nonvanishing on $\Z'$ and
\begin{equation}\label{eq3.X}
\dfrac{A^\z(x)}{A^{(z+m,z'+m)}(x)}=1+O(|x|^{-1}), \qquad |x|\gg0.
\end{equation}
This implies that the operator $V^\z_m$ is a bounded operator on $\ell^2(\Z')$ and so is its inverse. Moreover, $V^\z_m-1$ is in the Hilbert-Schmidt class. Note also that $V^\z$ is unitary in the case of the principal series (but not in the case of the complementary series). 

Let $\K^\z_m$ denote the operator with the matrix 
$$
K^\z_m(x,y):=\sum_{i=0}^{m-1} g^\z_i(x)g^{(z',z)}_i(y), \quad x,y\in\Z'.
$$
In other words,
$$
(\K^\z_m f)(x)=\sum_{y\in\Z'}K^\z_m(x,y)f(y), \qquad \forall f\in\ell^2(\Z').
$$
From \eqref{eq3.biorth} it is seen that $\K^\z_m$ has rank $m$ and satisfies the relation$(\K^\z)^2=\K^\z$. Note that in the case of the principal series it is a selfadjoint projection, but in the case of the complementary series it is not selfadjoint, and we use the term  \emph{skew projection}. 

The relation \eqref{eq3.r4} implies the important equality
\begin{equation}\label{kzkmz}
\K^\z=\K^\z_m+V^\z_m \K^{(z+m,z'+m)} (V^\z_m)^{-1}, \qquad m=1,2,\dots\,.
\end{equation}
We are now ready to formulate
\begin{theorem}\label{thm3.B}
Let $\z$ be an arbitrary admissible pair of parameters. For $m=1,2,\dots$, we have 
$$
\Ran(\K^\z)=\Ran(\K^\z_m) \oplus \Ran(V^\z_m \K^{(z+m,z'+m)} (V^\z_m)^{-1})
$$
The space $\Ran(V^\z_m \K^{(z+m,z'+m)} (V^\z_m)^{-1})$ is thus contained in the space $\Ran(\K^\z)$ and has codimension $m$.
\end{theorem}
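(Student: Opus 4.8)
The plan is to read the operator identity \eqref{kzkmz} as a relation among three bounded idempotents and to extract the decomposition by pure algebra, without ever invoking selfadjointness of the two summands. Write $P:=\K^\z$, $Q:=\K^\z_m$ and $R:=V^\z_m\K^{(z+m,z'+m)}(V^\z_m)^{-1}$, so that \eqref{kzkmz} reads $P=Q+R$. First I would record that all three are bounded idempotents: $P$ is a selfadjoint projection; $Q$ satisfies $Q^2=Q$ and has rank $m$ by the biorthogonality \eqref{eq3.biorth}; and $R^2=R$ because $\K^{(z+m,z'+m)}$ is a projection and conjugation by the invertible operator $V^\z_m$ preserves idempotency. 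In particular $\Ran(Q)$ and $\Ran(R)$ are closed (ranges of bounded idempotents), with $\dim\Ran(Q)=m$.

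The key step is to show that $Q$ and $R$ are \emph{orthogonal idempotents} in the algebraic sense, i.e.\ $QR=RQ=0$. Expanding $P=P^2=(Q+R)^2=Q+QR+RQ+R$ and subtracting $P=Q+R$ gives $QR+RQ=0$. Multiplying this relation by $Q$ on the right yields $QRQ=-RQ^2=-RQ=QR$, while multiplying by $Q$ on the left yields $QR=Q^2R=-QRQ$; combining the two forces $2QR=0$, hence $QR=0$, and therefore $RQ=-QR=0$ as well. From $QR=RQ=0$ together with $P=Q+R$ one immediately gets $PQ=QP=Q$ and $PR=RP=R$, so that $\Ran(Q)\subseteq\Ran(P)$ and $\Ran(R)\subseteq\Ran(P)$.

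It then remains to assemble the direct sum. For the inclusion $\Ran(P)\subseteq\Ran(Q)+\Ran(R)$, any $v\in\Ran(P)$ satisfies $v=Pv=Qv+Rv$; the reverse inclusion was just noted. For directness, if $w\in\Ran(Q)\cap\Ran(R)$, write $w=Rv$; then $w=Qw$ (since $Q^2=Q$ and $w\in\Ran(Q)$), while $Qw=QRv=0$, so $w=0$. This gives the asserted decomposition $\Ran(\K^\z)=\Ran(Q)\oplus\Ran(R)$, and because $\Ran(Q)$ is finite-dimensional its algebraic complement inside $\Ran(P)$ is automatically a topological direct sum into closed subspaces. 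Finally, the quotient $\Ran(P)/\Ran(R)$ is isomorphic to $\Ran(Q)$, which has dimension $m$; hence $\Ran(V^\z_m\K^{(z+m,z'+m)}(V^\z_m)^{-1})$ has codimension $m$ in $\Ran(\K^\z)$.

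I do not anticipate a serious obstacle, since the entire argument is driven by the single identity \eqref{kzkmz}. The only point requiring a little care is that in the complementary series $Q$ and $R$ are not selfadjoint, so $\Ran(Q)$ and $\Ran(R)$ need not be mutually orthogonal; but the relations $QR=RQ=0$ encode precisely the (possibly non-orthogonal) direct sum, and the finite-dimensionality of $\Ran(Q)$ disposes of any topological subtleties. In the principal series the additional unitarity of $V^\z_m$ makes $Q$ and $R$ selfadjoint and the decomposition orthogonal, but this refinement is not needed for the present statement.
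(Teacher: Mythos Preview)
Your proposal is correct and follows essentially the same approach as the paper: both start from the operator identity \eqref{kzkmz}, observe that all three operators are idempotents, deduce the anticommutator relation $QR+RQ=0$, and then conclude the direct sum decomposition via an elementary lemma. The only (minor) difference is in the proof of that elementary lemma: the paper argues by writing $Q$ and $R$ as $2\times2$ block matrices with respect to $\Ran(Q)\oplus\Ker(Q)$, whereas you extract $QR=RQ=0$ by the cleaner ring-theoretic manipulation of left- and right-multiplying $QR+RQ=0$ by $Q$.
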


Recall that $\Ran(\ccdot)$ is our notation for the range of an operator.  In all cases under consideration the range will be always a closed subspace.

\begin{proof}
The equation \ref{kzkmz} represents the projection $\K^\z$ as a sum of two (skew) projections. The following elementary general lemma shows that in this case the sum must be direct.
\end{proof}
\begin{lemma}\label{lemma3.A-bis}
Let $H$ be a vector space and $\{P, P_1, P_2\}$ be a triple of operators on $H$ such that 
$$
P=P_1+P_2, \quad P^2=P, \quad P_1^2=P_1, \quad P_2^2=P_2.
$$
Then\/ $\Ran(P)$ is the direct sum of\/ $\Ran(P_1)$ and\/ $\Ran(P_2)$. 
\end{lemma}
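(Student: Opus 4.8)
The plan is to reduce everything to the single algebraic fact that the two idempotents $P_1$ and $P_2$ are \emph{orthogonal}, meaning $P_1 P_2 = P_2 P_1 = 0$; once this is established, the statement about ranges is routine. First I would expand the idempotency relation $P^2 = P$ using the decomposition $P = P_1 + P_2$. Since $P_1^2 = P_1$ and $P_2^2 = P_2$, the diagonal terms reproduce $P_1 + P_2 = P$, so the off-diagonal cross terms must cancel, giving $P_1 P_2 + P_2 P_1 = 0$.

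The slightly delicate step, and the one I expect to be the main obstacle, is upgrading this anticommutation relation to the full orthogonality $P_1 P_2 = P_2 P_1 = 0$. To do this I would multiply $P_1 P_2 + P_2 P_1 = 0$ on the left, and then on the right, by $P_1$, using $P_1^2 = P_1$; this yields $P_1 P_2 = -P_1 P_2 P_1$ and $P_2 P_1 = -P_1 P_2 P_1$, hence $P_1 P_2 = P_2 P_1$. Combined with $P_1 P_2 + P_2 P_1 = 0$ this forces $2 P_1 P_2 = 0$, and since we work over a field of characteristic zero we conclude $P_1 P_2 = P_2 P_1 = 0$.

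With orthogonality in hand, the range identity follows in two short computations. Multiplying $P = P_1 + P_2$ by $P_1$ on the left gives $P P_1 = P_1^2 + P_2 P_1 = P_1$, and likewise $P P_2 = P_2$; hence $\Ran(P_1) \subseteq \Ran(P)$ and $\Ran(P_2) \subseteq \Ran(P)$, while the reverse inclusion $\Ran(P) \subseteq \Ran(P_1) + \Ran(P_2)$ is immediate from $Pv = P_1 v + P_2 v$. This proves $\Ran(P) = \Ran(P_1) + \Ran(P_2)$. For directness I would take an arbitrary $v \in \Ran(P_1) \cap \Ran(P_2)$; since $P_1$ and $P_2$ are idempotents, membership $v \in \Ran(P_i)$ gives $P_i v = v$, so $v = P_1 v = P_1 (P_2 v) = (P_1 P_2) v = 0$. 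Thus $\Ran(P_1) \cap \Ran(P_2) = \{0\}$, and the sum is direct, as claimed. Note that the argument is purely algebraic and uses neither selfadjointness nor any topology, which is exactly what is needed in the complementary-series case where $P_1 = \K^\z_m$ is only a skew projection.
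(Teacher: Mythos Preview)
Your proof is correct. Both you and the paper begin by expanding $P^2=P$ to obtain the anticommutation relation $P_1P_2+P_2P_1=0$, and both implicitly rely on the ground field having characteristic $\ne 2$. From that point the arguments diverge slightly: the paper writes $P_2$ as a $2\times2$ block matrix with respect to the decomposition $H=\Ran(P_1)\oplus\operatorname{Ker}(P_1)$ and observes that the anticommutation forces three of the four blocks to vanish, so $\Ran(P_2)\subseteq\operatorname{Ker}(P_1)$ and the direct sum is immediate. You instead multiply the anticommutation relation by $P_1$ on each side to upgrade it to full orthogonality $P_1P_2=P_2P_1=0$, then read off the range identity and directness from that. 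The two arguments are really the same computation in different coordinates: your relation $P_1P_2=0$ is exactly the vanishing of the blocks $A$ and $B$ in the paper's matrix, and $P_2P_1=0$ is the vanishing of $A$ and $C$. Your version has the minor advantage of making the orthogonality $P_1P_2=0$ explicit, which is a clean takeaway; the paper's block picture makes the geometric content (that $P_2$ lives entirely on $\operatorname{Ker}(P_1)$) more visible. One small slip: when you write ``multiplying $P=P_1+P_2$ by $P_1$ on the left gives $PP_1=\ldots$'' you have actually multiplied on the right; this is harmless since by that point you already know $P_1$ and $P_2$ commute.
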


\begin{proof}[Proof of lemma]
From the hypotheses of the lemma it follows that $P_1P_2+P_2P_1=0$. 

On the other hand, the space $H$ can be written as the direct sum of $\Ran(P_1)$ and $\operatorname{Ker}(P_1)$. Writing $P_1$ and $P_2$ in the block form with respect to this decomposition,
$$
P_1=\begin{bmatrix} 1 & 0\\ 0 & 0\end{bmatrix}, \quad P_2=\begin{bmatrix} A & B\\ C & D\end{bmatrix},
$$
we see that the relation $P_1P_2+P_2P_1=0$ precisely means that the blocks $A,B,C$ vanish, and the lemma follows
\end{proof}

\subsection{The case of the principal series: an orthogonal  basis in $\Ran(\K^{(z,\bar z)})$}

In the next theorem we are dealing with the case of the principal series.

\begin{theorem}\label{thm3.A} 
Let $z\in\C\setminus\Z$. The functions $g^{(z,\bar z)}_m$, where $m=0,1,2,\dots$,  constitute an orthonormal basis of\/ $\Ran(\K^{z,\bar z})$. 
\end{theorem}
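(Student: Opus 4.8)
The plan is to prove Theorem \ref{thm3.A} by combining three facts already at our disposal: the biorthogonality relation \eqref{eq3.biorth}, the series expansion \eqref{eq3.r2} of the kernel, and the special features of the principal series ($z'=\bar z$). First I would note that in the principal series the conjugation relation \eqref{eq3.bar} gives $\overline{g^{(z,\bar z)}_m(x)}=g^{(\bar z,z)}_m(x)$, so that the biorthogonality relation \eqref{eq3.biorth} becomes
\begin{equation*}
\sum_{x\in\Z'}g^{(z,\bar z)}_m(x)\,\overline{g^{(z,\bar z)}_n(x)}=\delta_{mn},
\end{equation*}
that is, $\{g^{(z,\bar z)}_m\}_{m\ge0}$ is an \emph{orthonormal} system in $\ell^2(\Z')$. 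By Proposition \ref{prop3.C} each $g^{(z,\bar z)}_m$ lies in $\ell^2(\Z')$, so this step is immediate once \eqref{eq3.bar} is inserted.

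Next I would show that every $g^{(z,\bar z)}_m$ lies in $\Ran(\K^{(z,\bar z)})$, i.e.\ that $\K^{(z,\bar z)}g^{(z,\bar z)}_m=g^{(z,\bar z)}_m$. Using the matrix expansion \eqref{eq3.r2}, namely $K^{(z,\bar z)}(x,y)=\sum_{k\ge0}g^{(z,\bar z)}_k(x)\,g^{(\bar z,z)}_k(y)$, one computes
\begin{equation*}
(\K^{(z,\bar z)}g^{(z,\bar z)}_m)(x)=\sum_{y\in\Z'}\sum_{k\ge0}g^{(z,\bar z)}_k(x)\,g^{(\bar z,z)}_k(y)\,g^{(z,\bar z)}_m(y)=\sum_{k\ge0}g^{(z,\bar z)}_k(x)\,\delta_{km}=g^{(z,\bar z)}_m(x),
\end{equation*}
where the inner sum over $y$ is evaluated by \eqref{eq3.biorth}. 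Since $\K^{(z,\bar z)}$ is a projection, this identifies the closed span of the $g^{(z,\bar z)}_m$ as a subspace of $\Ran(\K^{(z,\bar z)})$.

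It then remains to prove that these vectors span all of $\Ran(\K^{(z,\bar z)})$, i.e.\ that the orthonormal system is in fact a \emph{basis} of the range. For this I would argue by exhaustion using Theorem \ref{thm3.B}: the decomposition
\begin{equation*}
\Ran(\K^{(z,\bar z)})=\Ran(\K^{(z,\bar z)}_m)\oplus\Ran\!\bigl(V^{(z,\bar z)}_m\K^{(z+m,\bar z+m)}(V^{(z,\bar z)}_m)^{-1}\bigr)
\end{equation*}
shows that $\Ran(\K^{(z,\bar z)}_m)=\Span\{g^{(z,\bar z)}_0,\dots,g^{(z,\bar z)}_{m-1}\}$ is a subspace of codimension $m$ inside $\Ran(\K^{(z,\bar z)})$ for every $m$. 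If $f\in\Ran(\K^{(z,\bar z)})$ were orthogonal to all $g^{(z,\bar z)}_m$, then $f$ would be orthogonal to $\Ran(\K^{(z,\bar z)}_m)$ for every $m$; I would aim to derive $f=0$ by letting $m\to\infty$, controlling the orthogonal complement of $\Ran(\K^{(z,\bar z)}_m)$ inside $\Ran(\K^{(z,\bar z)})$ via the twisted copy $V^{(z,\bar z)}_m\K^{(z+m,\bar z+m)}(V^{(z,\bar z)}_m)^{-1}$ and showing that $\|\K^{(z,\bar z)}f - \sum_{i<m}(f,g^{(z,\bar z)}_i)g^{(z,\bar z)}_i\|\to0$. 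Concretely, the partial sums $\K^{(z,\bar z)}_m f$ are the orthogonal projections of $f$ onto the initial segments (orthogonality of the $g^{(z,\bar z)}_i$ makes $\K^{(z,\bar z)}_m$ selfadjoint in the principal series), and Bessel/Parseval together with \eqref{eq3.r2} — the kernel itself being the sum of the rank-one pieces $g^{(z,\bar z)}_m\otimes g^{(\bar z,z)}_m$ — force the residual norm to zero.

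The main obstacle I anticipate is this last completeness step: orthonormality plus inclusion in the range does not by itself yield a basis, and one must genuinely rule out a nonzero vector in $\Ran(\K^{(z,\bar z)})$ orthogonal to the whole system. The cleanest route is probably to show directly from \eqref{eq3.r2} that $\K^{(z,\bar z)}$ equals the orthogonal projection $\sum_{m\ge0}g^{(z,\bar z)}_m\otimes\overline{g^{(z,\bar z)}_m}$ onto $\SSpan\{g^{(z,\bar z)}_m\}$; since both are selfadjoint projections with the same matrix in the principal series (here \eqref{eq3.bar} is essential, as it turns $g^{(\bar z,z)}_m$ into $\overline{g^{(z,\bar z)}_m}$), they coincide, and the range of $\K^{(z,\bar z)}$ is exactly the closed span. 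Thus the crux is verifying that the operator defined by the right-hand side of \eqref{eq3.r2} is precisely the orthogonal projection onto the span of the $g^{(z,\bar z)}_m$, which is where the principal-series hypothesis does all the work.
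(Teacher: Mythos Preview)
Your proposal is correct, and the ``cleanest route'' you describe at the end is exactly the paper's proof: once \eqref{eq3.bar} turns \eqref{eq3.biorth} into genuine orthonormality and rewrites \eqref{eq3.r2} as $K^{(z,\bar z)}(x,y)=\sum_{m\ge0}g^{(z,\bar z)}_m(x)\,\overline{g^{(z,\bar z)}_m(y)}$, this is manifestly the matrix of the orthogonal projection onto $\SSpan\{g^{(z,\bar z)}_m:m\ge0\}$, so that space equals $\Ran(\K^{(z,\bar z)})$. Your intermediate steps---the explicit verification $\K^{(z,\bar z)}g^{(z,\bar z)}_m=g^{(z,\bar z)}_m$ and the appeal to Theorem~\ref{thm3.B}---are correct but superfluous; the paper dispenses with them in two sentences.
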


\begin{proof}
For the principal series, the biorthogonality relation  \eqref{eq3.biorth} precisely means that $\{g^{(z,\bar z)}_m:m\in\Z\}$ is an orthonormal family of functions, as it is seen from  \eqref{eq3.bar}. Next, taking again account of \eqref{eq3.bar}, we may rewrite the relation \eqref{eq3.r2} in the form
$$
K^{(z,\bar z)}(x,y)=\sum_{m=0}^\infty g^{(z,\bar z)}_m(x)\overline{g^{(z,\bar z)}_m(y)}, \quad x,y\in\Z'.
$$
This completes the proof.
\end{proof}

\subsection{The case of the complementary series: the range of $\K^\z$}

The analogue of Theorem \ref{thm3.A} for  the complementary series  is the following 
result.

\begin{theorem}\label{thm3.C}
Suppose that $\ell<z<z'<\ell+1$ for some $\ell\in\Z$. Then the space $\Ran(\K^\z)$ is the closed linear span of the functions $g^\z_0, g^\z_1, g^\z_2,\dots$\,.
\end{theorem}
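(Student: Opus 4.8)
The plan is to establish the two inclusions between $\Ran(\K^\z)$ and the closed subspace $L:=\SSpan\{g^\z_0,g^\z_1,g^\z_2,\dots\}$ separately, with the reverse inclusion being the substantial one. The inclusion $L\subseteq\Ran(\K^\z)$ is essentially free: by Theorem \ref{thm3.B} the range of the rank-$m$ skew projection $\K^\z_m$, which by the biorthogonality relation \eqref{eq3.biorth} is exactly $\Span\{g^\z_0,\dots,g^\z_{m-1}\}$, is contained in $\Ran(\K^\z)$ for every $m$. Since $\Ran(\K^\z)$ is closed, taking the union over $m$ and then the closure gives $L\subseteq\Ran(\K^\z)$.

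For the reverse inclusion I would show that the orthogonal complement of $L$ inside $\Ran(\K^\z)$ is trivial, so that $\Ran(\K^\z)=L$. The key device is the symmetry $K^\z=K^{(z',z)}$, which gives $\K^\z=\K^{(z',z)}$ and hence, writing \eqref{eq3.r2} for the swapped pair,
\begin{equation*}
K^\z(x,y)=K^{(z',z)}(x,y)=\sum_{m=0}^\infty g^{(z',z)}_m(x)\,g^\z_m(y).
\end{equation*}
Now take $f\in\Ran(\K^\z)$ with $f\perp g^\z_m$ for all $m\ge0$; since the $g^\z_m$ are real in the complementary series, this means $\sum_x f(x)g^\z_m(x)=0$ for every $m$. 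Using $f=\K^\z f=\K^{(z',z)}f$ and substituting the above expansion of the kernel, I would interchange the summations over $m$ and over $y$ to obtain
\begin{equation*}
f(x)=\sum_{m=0}^\infty g^{(z',z)}_m(x)\sum_{y\in\Z'}g^\z_m(y)f(y)=\sum_{m=0}^\infty g^{(z',z)}_m(x)\,(f,g^\z_m)=0,
\end{equation*}
whence $f=0$. Combined with the first paragraph, this proves $\Ran(\K^\z)=L$.

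The main obstacle is the justification of the interchange of summations in the last display, i.e. Fubini for the double series over $(m,y)$. After bounding the inner sum over $y$ by the Cauchy--Schwarz inequality, it suffices to check, for each fixed $x$, that $\sum_{m}|g^{(z',z)}_m(x)|\,\|g^\z_m\|_{\ell^2}<\infty$. Here the asymptotic analysis behind Proposition \ref{prop3.C} is decisive: from \eqref{eq3.g} and \eqref{eq3.E} one reads off, for fixed $x$ and $m\to\infty$, the decay $|g^{(z',z)}_m(x)|=O(m^{z-z'-1})$, while the differing decay rates of $g^\z_m$ and $g^{(z',z)}_m$ in the complementary series (reflected in the exponents $-2$ versus $-2+z'-z$ appearing in the proof of Proposition \ref{prop3.C}) must be combined to control $\|g^\z_m\|_{\ell^2}$. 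It is precisely the inequalities $0<z'-z<1$, that is, the hypothesis $\ell<z<z'<\ell+1$, that make the relevant exponents summable, and this is the one place where the complementary-series assumption is genuinely used.

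As a safeguard, should the direct Fubini estimate prove too tight, I would argue instead through the skew projections. The finite partial sums $\K^\z_m f=\sum_{i<m}(f,g^{(z',z)}_i)\,g^\z_i$ lie in $L$ by construction (and involve no interchange of infinite sums), while Theorem \ref{thm3.B} gives $f-\K^\z_m f=W_m f$ with $W_m:=V^\z_m\,\K^{(z+m,z'+m)}\,(V^\z_m)^{-1}$. Proving $W_m f\to0$ in $\ell^2$ for every $f\in\Ran(\K^\z)$ would then yield $f\in\overline L=L$ by closedness of $L$. I expect this route to require understanding the behaviour of $\Ran(\K^{(z+m,z'+m)})$ as $m\to\infty$, which is the reason the first, more self-contained, argument is the one I would attempt first.
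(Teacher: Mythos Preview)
Your inclusion $L\subseteq\Ran(\K^\z)$ is fine and matches the paper. The reverse inclusion is where your argument and the paper's diverge.

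Your primary route is an orthogonal-complement argument resting on the interchange
\[
\sum_{y\in\Z'}\sum_{m\ge0}g^{(z',z)}_m(x)\,g^\z_m(y)\,f(y)
\;=\;
\sum_{m\ge0}g^{(z',z)}_m(x)\sum_{y\in\Z'}g^\z_m(y)\,f(y),
\]
for which, as you note, one needs $\sum_m |g^{(z',z)}_m(x)|\,\|g^\z_m\|_{\ell^2}<\infty$. The decay $|g^{(z',z)}_m(x)|=O(m^{z-z'-1})$ is correct, but you never actually bound $\|g^\z_m\|_{\ell^2}$ as $m\to\infty$. The exponents $-2$ and $-2+z'-z$ from Proposition~\ref{prop3.C} concern decay in $x$ for \emph{fixed} $m$ and carry implicit constants that depend on $m$; extracting a uniform-in-$m$ polynomial bound on $\|g^\z_m\|$ is a separate (and nontrivial) computation that neither the paper nor your proposal carries out. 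So the Fubini step is asserted, not proved. Your fallback asks for $W_m f\to 0$ in $\ell^2$-norm for every $f\in\Ran(\K^\z)$; this is again stronger than anything established in the paper, and you give no argument for it.

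The paper avoids both difficulties by a weak-convergence trick. It does not test against an arbitrary $f$; instead it shows that each column $v_y:=K^\z(\ccdot,y)$ lies in $L$. Setting $v_{y,m}:=\sum_{i<m}g^\z_i(\ccdot)\,g^{(z',z)}_i(y)\in L$, relation~\eqref{eq3.r4} gives the explicit remainder
\[
v_y(x)-v_{y,m}(x)=\frac{A^\z(x)A^\z(y+m)}{A^\z(x+m)A^\z(y)}\,K^\z(x+m,y+m).
\]
From this closed formula the paper proves (i) $v_{y,m}(x)\to v_y(x)$ pointwise in $x$, and (ii) $\|v_y-v_{y,m}\|_{\ell^2}\le C$ \emph{uniformly in $m$} (not $\to0$). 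These two together give $v_{y,m}\to v_y$ weakly in $\ell^2$, hence $v_y\in L$ since $L$ is weakly closed. The hypothesis $z<z'$ enters exactly in the uniform bound (ii). Note that norm convergence of $v_{y,m}$ to $v_y$ --- essentially your fallback --- is never claimed; the passage through weak limits is what makes the argument go through without any growth estimate on $\|g^\z_m\|$.

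In short: your strategy is plausible and genuinely different, but its linchpin (a uniform-in-$m$ control of $\|g^\z_m\|_{\ell^2}$, or alternatively norm convergence of $W_m$) is left open. The paper's proof sidesteps that estimate entirely by working with the kernel columns and exploiting the remainder formula~\eqref{eq3.r4} to get weak, rather than norm, convergence.
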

 The assumption $z<z'$ is made for notational convenience and does not impose a restriction on the gamma kernel because $K^\z(x,y)=K^{(z',z)}(x,y)$. This assumption $z<z'$ will be used on the last step of the proof only. 

\begin{proof}
\emph{Step} 1. Let $H\subset\ell^2(\Z')$ denote the closed linear span of the functions $g^\z_i$, $i=0,1,2,\dots$\,. From the proof of Theorem \ref{thm3.B} it follows that $g^\z_i\in\Ran(\K^\z)$ for any $i=0,1,2,\dots$\,. Thus, $H\subseteq\Ran(\K^\z)$. Now we have to prove that in fact $H=\Ran(\K^\z)$. 

Given $y\in\Z'$, we consider the function $v_y(x):=K^\z(x,y)$. The space $\Ran(\K^\z)$ is the closed linear span of such functions, so that it suffices to prove that $v_y\in H$ for any $y\in\Z'$. 

Fix an arbitrary $y\in\Z'$. Suppose that we have found a sequence $\{v_{y,m}\in H: m=1,2,\dots\}$ such that 
\begin{equation}\label{eq3.V1}
\lim_{m\to\infty}(v_y(x)-v_{y,m}(x))=0 \quad \text{for any fixed $x\in\Z'$}
\end{equation}
and there exists a bound 
\begin{equation}\label{eq3.V2}
\Vert v_{y}-v_{y,m}\Vert \le\const,  \quad \text{uniformly on $m$.}
\end{equation}
Then \eqref{eq3.V1} and \eqref{eq3.V2} would imply that $v_{y,m}\to v_y$ in the weak topology of Hilbert space $\ell^2(\Z')$, which in turn implies that $v_y\in H$, as desired.
\smallskip 

\emph{Step} 2. Our choice of $\{v_{y,m}\}$ is prompted by \eqref{eq3.r4}. Namely, we set 
\begin{equation}\label{eq3.U}
v_{y,m}:=\sum_{i=0}^{m-1}g^\z_i(x)g^{(z',z)}_i(y),  \qquad m=1,2,\dots\,.
\end{equation}
Then we have
\begin{multline}\label{eq3.W}
v_y(x)-v_{y,m}(x)=\frac{A^\z(x) A^{(z+m,z'+m)}(y)}{A^{(z+m,z'+m)}(x) A^\z(y)}\, K^{(z+m,z'+m)}(x,y)\\
=\frac{A^\z(x) A^\z(y+m)}{A^\z(x+m) A^\z(y)}\, K^\z(x+m,y+m).
\end{multline}
Thus, our task is to check \eqref{eq3.V1} and \eqref{eq3.V2} for this concrete expression.
\smallskip

\emph{Step} 3. Let us check \eqref{eq3.V1}. Since
$$
\frac{A^{(z+m,z'+m)}(y)}{A^{(z+m,z'+m)}(x)}\sim \left(\frac{m+y}{m+x}\right)^{(z-z')/2}\to 1,
$$
we have to check that $K^\z(x+m,y+m)\to0$ as $m\to+\infty$. Examine two possible cases: $x\ne y$ and $x=y$. 

In the first case we may use \eqref{eq2.C1}. From it we obtain
$$
|K^\z(x+m,y+m)|\le\const \left|\dfrac{A^\z(m+x)}{A^\z(m+y)}-\dfrac{A^\z(m+y)}{A^\z(m+x)}\right|,
$$
and it suffices to prove that $A^\z(m+x)/A^\z(m+y)$ tends to $1$. Since this quantity is positive for large $m$, we may deal with its square, which is more convenient. We have 
$$
\left(\dfrac{A^\z(m+x)}{A^\z(m+y)}\right)^2=\dfrac{\Ga(m+x+z+\tfrac12)\Ga(m+y+z'+\tfrac12)}{\Ga(m+x+z'+\tfrac12)\Ga(m+y+z+\tfrac12)},
$$
and from the asymptotic formula \eqref{eq3.E} it is seen that this quantity tends to $1$, as desired. 

In the second case we use formula \eqref{eq2.C2}. From it we obtain 
$$
|K^\z(y+m,y+m)|\le\const|\psi(m+y+z+\tfrac12)-\psi(m+y+z'+\tfrac12)|.
$$
Then we apply the asymptotic formula (\cite[Section 1.18, (7)]{Er})
$$
\psi(r)=\log r-\tfrac12 r^{-1}+O(r^{-2}),  \quad r\gg0,
$$ 
which implies that $\psi(m+y+z+\tfrac12)-\psi(m+y+z'+\tfrac12)\to0$, as desired. 
\smallskip

\emph{Step} 4. Let us check \eqref{eq3.V2}. Here we use the assumption $z'>z$. It suffices to show that  
$$
|v_y(x)-v_{y,m}(x)| =O\left(|x|^{-1+\frac{z'-z}2}\right) \qquad \text{for $|x|$ large, uniformly on $m=1,2,\dots$}\,.
$$
Since $(z'-z)/2<1/2$, this will guarantee the uniform convergence of the series 
$$
\sum_{x\in\Z'}(v_y(x)-v_{y,m}(x))^2.
$$

Examine again the expression \eqref{eq3.W}. Discarding the constant factor $A^\z(y)$ in the denominator we may transform \eqref{eq3.W} to the form
$$
A^\z(x)\, \dfrac{1-\left(\dfrac{A^\z(y+m)}{A^\z(x+m)}\right)^2}{x-y}=\dfrac{A^\z(x)}{x-y} - \dfrac{A^\z(x)\,\left(\dfrac{A^\z(y+m)}{A^\z(x+m)}\right)^2}{x-y}.
$$
We have
$$
\left|\dfrac{A^\z(x)}{x-y} \right|\sim|x|^{-1+(z-z')/2}=O(|x|^{-1}),
$$
since $z<z'$. Next,
$$
\left(\dfrac{A^\z(y+m)}{A^\z(x+m)}\right)^2 \le \const m^{z-z'}|x+m|^{z'-z}.
$$
Therefore,
\begin{multline*}
\dfrac{\left|  A^\z(x)\,\left(\dfrac{A^\z(y+m)}{A^\z(x+m)}\right)^2\right|}{|x-y|}\le \const |x|^{-1+(z-z')/2}m^{z-z'}|x+m|^{z'-z}\\
=\const\, \left(\dfrac{|x+m|}{|x|m}\right)^{z'-z}\, |x|^{-1+\frac{z'-z}2}.
\end{multline*}
For large $|x|$ and $m$ we have 
$$
\dfrac{|x+m|}{|x|m}<1,
$$
and since $z'-z>0$ we finally obtain the desired bound. 

This completes the proof. 
\end{proof}

\section{Remarks on shift invariant subspaces and Schur measures}

\subsection{Shift invariant subspaces of $\ell^2(\Z)$}\label{sect4.1}
Let $\T$ be the unit circle $|u|=1$ in $\C$ and $L^2(\T)$ stand for the complex $L^2$ space corresponding to the normalized Lebesgue measure on $\T$. The Laurent monomials $u^m$, $m\in\Z$, form an orthonormal basis in $L^2(\T)$. The \emph{Hardy space} $H^2(\T)\subset L^2(\T)$ is the closed subspace spanned by the monomials $u^m$ with exponent $m\in\Z_{\ge0}$. 

We denote by $f\mapsto \wh f$ the Fourier transform $L^2(\T)\to\ell^2(\Z)$ sending $u^m$ to the delta function at $m\in\Z$. The \emph{shift operator} on $\ell^2(\Z)$ is the unitary operator $S$ defined by $S g(k)=g(k-1)$. Its inverse Fourier transform $\check S$ is the unitary operator on $L^2(\T)$ given by $\check S f(u)=uf(u)$. 

We say that a closed subspace $L\subseteq \ell^2(\Z)$ is \emph{shift invariant} if $SL\subseteq L$. The shift invariant subspaces split into two classes depending on whether $S L=L$ or $SL$ is strictly contained in $L$. Adopting the terminology of \cite[Lecture II]{Helson} we will speak about \emph{doubly invariant} and \emph{simply invariant} subspaces, respectively. Similar subspaces arise in $L^2(\T)$, with $S$ replaced by $\check S$. 

\begin{proposition}\label{prop4.A}
{\rm(i)} The doubly invariant subspaces $\mathcal L\subseteq L^2(\T)$ are determined by Lebesgue measurable sets $A\subseteq\T$; the subspace $\mathcal L_A$ corresponding to a given set $A$ is formed by functions vanishing outside $A$.

{\rm(ii)} The simply invariant subspaces $\mathcal L\subseteq L^2(\T)$ are determined by complex valued measurable functions $\varphi(u)$ on $\T$ such that $|\varphi(u)|=1$ almost everywhere; the subspace corresponding to a given function $\varphi$ has the form $\mathcal L_\varphi:=\varphi\cdot H^2(\T)$.
\end{proposition}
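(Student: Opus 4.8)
The plan is to prove this classical result (a form of the Wold--Beurling--Helson theorem) by working entirely in the model $L^2(\T)$, where the shift $\check S$ is multiplication by $u$. The key structural observation is that a subspace $\mathcal L$ is doubly invariant precisely when it is invariant under multiplication by both $u$ and $u^{-1}$, hence under multiplication by every Laurent polynomial, and therefore (by an approximation argument) under multiplication by every element of $L^\infty(\T)$; thus doubly invariant subspaces are exactly the $L^\infty(\T)$-submodules of $L^2(\T)$. For part (i), I would first check that each $\mathcal L_A$ is doubly invariant and that distinct $A$ (up to null sets) give distinct subspaces, since $\mathcal L_A$ is recovered as the essential support of its elements. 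Conversely, given a doubly invariant $\mathcal L$, let $P$ be the orthogonal projection onto $\mathcal L$; since $\mathcal L$ is invariant under multiplication by every $\varphi\in L^\infty(\T)$, the projection $P$ commutes with all such multiplication operators, and hence $P$ is itself multiplication by an $L^\infty$ function. That function must be an idempotent, so it equals $\mathbf1_A$ for some measurable $A\subseteq\T$, giving $\mathcal L=\mathcal L_A$.

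For part (ii), suppose $\mathcal L$ is simply invariant, i.e.\ $\check S\mathcal L\subsetneq\mathcal L$. The crucial step is to produce the \emph{wandering} one-dimensional space $\mathcal W:=\mathcal L\ominus\check S\mathcal L$, which is nonzero by hypothesis. I would fix a unit vector $\varphi\in\mathcal W$ and first argue that $|\varphi|=1$ almost everywhere: for every $m\ge1$ the vector $u^m\varphi$ lies in $\check S\mathcal L\perp\varphi$, so $\int_\T u^m|\varphi|^2\,\tfrac{du}{u}=0$, and by symmetry (taking the conjugate) all nonzero Fourier coefficients of $|\varphi|^2$ vanish; thus $|\varphi|^2$ is constant, and it equals $1$ after normalization. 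Once $|\varphi|=1$ a.e., the monomials $u^m\varphi$ ($m\ge0$) form an orthonormal system, and their closed span is exactly $\varphi\cdot H^2(\T)=\mathcal L_\varphi$, which is visibly contained in $\mathcal L$.

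The main obstacle — and the step that requires the most care — is proving the reverse inclusion $\mathcal L\subseteq\mathcal L_\varphi$, equivalently that the wandering subspace $\mathcal W$ is one-dimensional and that no mass of $\mathcal L$ escapes into the doubly invariant part. The clean way is the Wold decomposition: write $\mathcal L=\left(\bigoplus_{m\ge0}\check S^m\mathcal W\right)\oplus\mathcal L_\infty$, where $\mathcal L_\infty:=\bigcap_{m\ge0}\check S^m\mathcal L$ is doubly invariant. By part (i), $\mathcal L_\infty=\mathcal L_B$ for some measurable $B$, and I would show $B$ is null: since $\varphi\in\mathcal L$ has unit modulus everywhere while $\varphi\perp\mathcal L_\infty$, and $\mathcal L_B=\mathbf 1_B\cdot L^2(\T)$ is a module over $L^\infty(\T)$ with $\mathbf1_B\varphi\in\mathcal L_B\subseteq\mathcal L$, a short computation forces $\mathbf1_B=0$. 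Finiteness of $\mathcal L_\infty=\{0\}$ then gives $\mathcal W$ one-dimensional (two orthogonal unimodular wandering vectors would produce, via their ratio, a nontrivial doubly invariant piece), completing the identification $\mathcal L=\mathcal L_\varphi$. Finally, $\varphi$ is determined by $\mathcal L$ only up to a unimodular constant, which matches the stated parametrization.
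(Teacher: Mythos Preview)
The paper does not actually prove this proposition: immediately after the statement it says ``These are well-known classical results. Claim (i) is due to Wiener \ldots\ and claim (ii) is a generalization of Beurling's theorem due to Helson and Laudenslager,'' with a reference to Helson's book. So there is nothing to compare your argument against in the paper itself; you have supplied a proof where the authors chose to cite.

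Your argument is the standard Wold--Beurling route and is essentially correct. A couple of points deserve tightening. First, the step ``$P$ commutes with all multiplications by $L^\infty$ functions, hence $P$ is itself a multiplication operator'' is right but deserves a one-line justification (e.g.\ the commutant of $L^\infty(\T)$ acting on $L^2(\T)$ is $L^\infty(\T)$, or compute $P\mathbf 1$ and check it works). Second, your explanation of why $\dim\mathcal W=1$ is a bit opaque as written. The clean version is: if $\varphi,\psi\in\mathcal W$ are orthogonal unit vectors, then the same Fourier-coefficient computation you used for $|\varphi|^2$ shows that $\langle u^n\varphi,\psi\rangle=0$ for every $n\in\Z$ (use the Wold orthogonality $\check S^m\mathcal W\perp\mathcal W$ for $m\ge1$ and take conjugates for $m\le-1$), hence $\varphi\bar\psi=0$ a.e., which contradicts $|\varphi\bar\psi|=1$. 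Phrasing this as ``their ratio would produce a nontrivial doubly invariant piece'' obscures the actual contradiction. With those two clarifications your write-up would be a perfectly good self-contained proof of the cited classical theorem.
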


These are well-known classical results. Claim (i) is due to Wiener (\cite[Lecture II, Theorem 2]{Helson}), and claim (ii) is a generalization of Beurling's theorem due to Helson and Laudenslager (\cite[Lecture II, Theorem 3]{Helson}). 

As a corollary we obtain the description of projection kernels on $\Z$ corresponding to shift invariant subspaces of both types: 

$\bullet$ The kernels corresponding to doubly invariant subspaces $L\subseteq\ell^2(\Z)$ have the form
\begin{equation}\label{eq4.A}
\mathcal K_A(a,b)=\wh{\one_A}(a-b), \qquad a,b\in\Z,
\end{equation}
where $A\subseteq\T$ is a measurable set and $\one_A$ is its characteristic function.

$\bullet$ The kernels corresponding to simply invariant subspaces $L\subseteq\ell^2(\Z)$ have the form
\begin{equation}\label{eq4.B}
\mathcal K_\varphi(a,b)=\sum_{n=0}^\infty\wh\varphi(a-n)\overline{\wh\varphi(b-n)}, \qquad a,b\in\Z,
\end{equation}
where $\varphi(u)$ is a complex valued measurable function on $\T$ such that $|\varphi(u)|=1$ almost everywhere. 

A concrete example of type \eqref{eq4.A} is the \emph{discrete sine kernel} 
$$
\mathcal K(a,b)=\frac{\sin(\al(a-b))}{\pi(a-b)}, \qquad 0<\al<\pi,
$$
which corresponds to the arc $A=\exp\{[-i\al, i\al]\}\subset\T$. About this kernel see \cite{BOO}. 

Under a suitable identification of $\Z'$ with $\Z$, the modified gamma kernel discussed in Section \ref{sect3} provides an example of type \eqref{eq4.B}, see the next subsection.  

The kernels of the form \eqref{eq4.A} are precisely translation invariant projection kernels. The kernels of the form \eqref{eq4.B}, on the contrary, are not translation invariant.

\begin{question}
What can be said about determinantal measures with general  projection correlation kernels of type \eqref{eq4.A} or \eqref{eq4.B}? 
\end{question}

For translation invariant kernels, there is an interesting result \cite[Corollary 7.14]{Ly-2003}, see also \cite[Section 4.1]{Ly-2014}.

\subsection{Schur measures}
Let $\Sym$ denote the algebra of symmetric functions \cite{Mac-1995}. It is generated by the power sum functions $p_1,p_2,\dots$  and possesses a distinguished basis formed by the Schur symmetric functions $s_\la$ indexed by partitions $\la\in\Y$. Another set of generators of $\Sym$ is formed by the complete homogeneous symmetric functions $h_1,h_2,\dots$, with the formal generating series  
$$
H(u):=\sum_{n=0}^\infty h_nu^n=\exp\left(\sum_{k=1}^\infty \frac1k p_k u^k\right).
$$

An algebra homomorphism $\epsi:\Sym\to\C$ is called a \emph{specialization}. It is uniquely determined by the choice of complex numbers $\epsi(p_k)$, $k=1,2,\dots$\,. Alternatively, one can specify the numbers $\epsi(h_k)$. The connection between the two sequences $\{\epsi(p_k)\}$ and $\{\epsi(h_k)\}$ is given by 
\begin{equation}\label{eq4.F-bis}
\epsi(H(u)):=1+\sum_{k=1}^\infty \epsi(h_k) u^k=\exp\left(\sum_{k=1}^\infty \frac1k \epsi(p_k) u^k\right).
\end{equation}
The Jacobi--Trudi identity \cite[chapter I, (3.4)]{Mac-1995} implies 
$$
\epsi(s_\la)=\det[\epsi(h_{\la_i-i+j})], \qquad \la\in\Y,
$$
with the understanding that $h_0:=1$ and $h_{-1}=h_{-2}=\dots:=0$; the order of the determinant is any number greater or equal to the number of nonzero parts of partition $\la$.

\begin{definition}[See \cite{Ok-2001a}, \cite{BOk-2000}]\label{def4.A}
Let $\epsi$ and $\epsi'$ be two specializations subject to two conditions: 
\begin{equation}\label{eq4.C}
 \epsi(s_\la)\epsi'(s_\la)\ge0, \quad \forall \la\in\Y; \qquad 
 \sum_{\la\in\Y}\epsi(s_\la)\epsi'(s_\la)<\infty.
\end{equation} 
The corresponding  \emph{Schur measure} is the probability measure on $\Y$ is given by 
the formula
$$
M(\la):=\frac1Z \epsi(s_\la)\epsi'(s_\la), \qquad \la\in\Y,
$$
where
$$
Z:=\sum_{\la\in\Y}\epsi(s_\la)\epsi'(s_\la)=\exp\left(\sum_{k=1}^\infty \frac1k \epsi(p_k)\epsi'(p_k)\right).
$$
\end{definition}

Note that the first condition in \eqref{eq4.C} is satisfied if $\epsi'=\bar\epsi$, meaning that $\epsi'(p_k=\overline{\epsi(p_k)}$ for all $k$. 

\begin{proposition}\label{prop4.B}
{\rm(i)} For any Schur measure $M\in\P(\Y)$, its pushforward under the embedding $\Y\to\Om$ defined by \eqref{eq1.B}  is a determinantal measure. 

{\rm(ii)} Let $\epsi$ and $\epsi'$ be the specializations corresponding to $M$. Suppose that the series $\epsi(H(u))$ and $\epsi'(H(u))$  defined by \eqref{eq4.F} converge in a neighbourhood of the unit circle $\T\subset\C$. Introduce two functions on $\T$ defined by $(\epsi,\epsi')$:
\begin{equation}\label{eq4.D}
\Phi(u):=\frac{\epsi(H(u))}{\epsi'(H(u^{-1}))}=\frac{\sum_{n=0}^\infty \epsi(h_n)u^n}{\sum_{n=0}^\infty \epsi'(h_n)u^{-n}}
\end{equation}
and
\begin{equation}\label{eq4.E}
\Phi'(u):=\frac{\epsi'(H(u))}{\epsi(H(u^{-1}))}=\frac{\sum_{n=0}^\infty \epsi'(h_n)u^n}{\sum_{n=0}^\infty \epsi(h_n)u^{-n}}=(\Phi(u^{-1}))^{-1}.
\end{equation}
Finally, let $\wh\Phi(k)$ and\/ $\wh{\Phi'}(k)$ be their Fourier coefficients. 

The kernel 
\begin{equation}\label{eq4.F}
K(x,y)=\sum_{n=0}^\infty \wh\Phi(x+n+\tfrac12)\wh{\Phi'}(y+n+\tfrac12), \qquad x,y\in\Z',
\end{equation}
serves as a correlation kernel for $M$. 
\end{proposition}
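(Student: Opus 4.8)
The plan is to treat part (i) as the theorem of Okounkov \cite{Ok-2001a} (see also \cite{BOk-2000}), asserting that every Schur measure pushes forward under \eqref{eq1.B} to a determinantal point process on $\Z'$, and to devote the actual work to part (ii), the explicit identification of the correlation kernel. The natural framework is the fermionic Fock space (infinite wedge), with creation/annihilation operators $\psi_k,\psi_k^*$ indexed by $k\in\Z'$, the vacuum $|\varnothing\rangle$ (the Dirac sea filling all sites $k<0$), and the half--vertex operators
\[
\Gamma_+(\epsi)=\exp\Big(\sum_{k\ge1}\tfrac1k\epsi(p_k)\,\alpha_k\Big),\qquad \Gamma_-(\epsi')=\exp\Big(\sum_{k\ge1}\tfrac1k\epsi'(p_k)\,\alpha_{-k}\Big),
\]
where $\alpha_{\pm k}$ are the bosonic current modes. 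Under the Maya--diagram identification $\la\mapsto\om(\la)$ of \eqref{eq1.B} one has $\langle\la|\Gamma_-(\epsi')|\varnothing\rangle=\epsi'(s_\la)$ and $\langle\varnothing|\Gamma_+(\epsi)|\la\rangle=\epsi(s_\la)$, so the Schur measure is realized as the normalized matrix element $M(\la)=Z^{-1}\langle\varnothing|\Gamma_+(\epsi)|\la\rangle\langle\la|\Gamma_-(\epsi')|\varnothing\rangle$. Since $\Gamma_\pm$ are exponentials of operators quadratic in the fermions, conjugation by them is a Bogoliubov transformation; hence the functional $\mathcal O\mapsto Z^{-1}\langle\varnothing|\Gamma_+(\epsi)\,\mathcal O\,\Gamma_-(\epsi')|\varnothing\rangle$ is quasi--free, Wick's theorem applies, and all correlation functions are determinants built from the two--point function. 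Thus it suffices to compute $K(x,y)=Z^{-1}\langle\varnothing|\Gamma_+(\epsi)\,\psi_x\psi_y^*\,\Gamma_-(\epsi')|\varnothing\rangle$, with the occupation projector $n_x=\psi_x\psi_x^*$ so that $K(x,x)=\rho_1(x)$.

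First I would record the basic commutation relation $\Gamma_+(\epsi)\Gamma_-(\epsi')=Z\,\Gamma_-(\epsi')\Gamma_+(\epsi)$, with $Z=\exp\big(\sum_{k\ge1}\tfrac1k\epsi(p_k)\epsi'(p_k)\big)$ as in Definition \ref{def4.A}, together with the conjugation rules on the fermion generating series $\psi(z)=\sum_{x\in\Z'}\psi_xz^{x}$, $\psi^*(w)=\sum_{y\in\Z'}\psi_y^*w^{-y}$, namely
\[
\Gamma_+(\epsi)\,\psi(z)\,\Gamma_+(\epsi)^{-1}=\epsi(H(z))\,\psi(z),\qquad \Gamma_-(\epsi')\,\psi(z)\,\Gamma_-(\epsi')^{-1}=\epsi'(H(z^{-1}))\,\psi(z),
\]
with the reciprocal factors for $\psi^*$. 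Using $\langle\varnothing|\Gamma_-(\epsi')=\langle\varnothing|$ and $\Gamma_+(\epsi)|\varnothing\rangle=|\varnothing\rangle$, moving $\Gamma_+$ to the right and $\Gamma_-$ to the left across the fermion bilinear produces the factor $Z$ (cancelling the normalization) and replaces the fields by the conjugated ones
\[
\widetilde\psi(z)=\Phi(z)\,\psi(z),\qquad \widetilde{\psi^*}(w)=\Phi(w)^{-1}\psi^*(w),
\]
where $\Phi$ is precisely the function \eqref{eq4.D}, the two reflected factors $\epsi(H(z))$ and $\epsi'(H(z^{-1}))$ combining into the ratio.

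It then remains to insert the free--fermion vacuum two--point function $\langle\varnothing|\psi(z)\psi^*(w)|\varnothing\rangle=\sum_{k<0}(z/w)^{k}=\dfrac{(w/z)^{1/2}}{1-w/z}$, valid for $|w|<|z|$, to obtain
\[
K(x,y)=[z^{x}w^{-y}]\,\frac{\Phi(z)}{\Phi(w)}\,\frac{(w/z)^{1/2}}{1-w/z}.
\]
Expanding the geometric factor as $\sum_{n\ge0}(w/z)^{n+1/2}$, writing $\Phi(z)=\sum_a\wh\Phi(a)z^{a}$ and, by the identity $\Phi'(u)=\Phi(u^{-1})^{-1}$ of \eqref{eq4.E}, $\Phi(w)^{-1}=\Phi'(w^{-1})=\sum_b\wh{\Phi'}(b)w^{-b}$, and then matching the powers $a=x+n+\tfrac12$, $b=y+n+\tfrac12$, gives exactly the series \eqref{eq4.F}.

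The main obstacle is analytic rather than algebraic: one must justify the formal operator manipulations and the coefficient extraction. This is where the hypothesis that $\epsi(H(u))$ and $\epsi'(H(u))$ converge in a neighbourhood of $\T$ enters: it guarantees that $\Phi$ and $\Phi'$ are analytic in an annulus containing $\T$ (away from the zeros of the denominators), so their Fourier coefficients decay geometrically, the two contours can be taken concentric with $|w|<|z|$ so that the geometric series converges, and the series \eqref{eq4.F} converges absolutely; the conditions \eqref{eq4.C} guarantee in turn that $Z<\infty$ and that the functional above is a genuine probability measure. Finally I would observe that the kernel so obtained is of the form \eqref{eq4.B} with $\varphi=\Phi$, i.e.\ it is the projection kernel onto the simply invariant subspace $\Phi\cdot H^2(\T)$, which reconciles part (ii) with Proposition \ref{prop4.A} and independently re-proves part (i).
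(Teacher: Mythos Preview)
Your sketch is correct and is essentially Okounkov's infinite--wedge argument from \cite{Ok-2001a}, \cite{BOk-2000}. The paper itself does not give a proof of Proposition \ref{prop4.B}: it simply cites those references and points to the generating series after formula (3.3) in \cite{BOk-2000} as the source of the expression \eqref{eq4.F}. So rather than taking a different route, you have supplied the content behind the citation. The only correction I would make is to your closing remark: the identification of $K$ with a projection kernel of the form \eqref{eq4.B} onto $\Phi\cdot H^2(\T)$ requires $|\Phi|\equiv 1$ on $\T$, which holds only under the additional hypothesis $\epsi'=\bar\epsi$; this is exactly the content of the Corollary following Proposition \ref{prop4.B}, not of the proposition itself.
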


\begin{proof} 
These results are due to Okounkov \cite{Ok-2001a}; see also \cite{BOk-2000} for more details. The expression \eqref{eq4.F} is obtained from the generation series displayed after formula (3.3) in \cite{BOk-2000}. For another approach to Schur measures, see \cite[Section 3]{Jo}.
\end{proof} 

\begin{corollary}
Suppose additionally that $\epsi'=\bar \epsi$. Then $|\Phi(u)|\equiv1$ on $\T$ and \eqref{eq4.F} takes the form
\begin{equation}\label{eq4.G}
K(x,y)=\sum_{n=0}^\infty \wh\Phi(x+n+\tfrac12)\overline{\wh{\Phi}(y+n+\tfrac12)}, \qquad x,y\in\Z'.
\end{equation}
This is a projection kernel corresponding to a subspace of $\ell^2(\Z')$. The functions $\wh\Phi_n(x):=\wh\Phi(x+n+\tfrac12)$ with $n=0,1,2,\dots$ form an orthonormal basis of that subspace. 
\end{corollary}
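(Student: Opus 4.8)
The plan is to specialize Proposition \ref{prop4.B}(ii) to the self-conjugate case $\epsi'=\bar\epsi$ and then read off the three assertions—the unimodularity of $\Phi$, the form \eqref{eq4.G}, and the orthonormal-basis statement—from the structural results on simply invariant subspaces in Proposition \ref{prop4.A}(ii). First I would record the effect of the hypothesis $\epsi'=\bar\epsi$ on the generating series. Since $\epsi'(h_n)=\overline{\epsi(h_n)}$ for all $n$, the denominator $\sum_n\epsi'(h_n)u^{-n}$ in \eqref{eq4.D} is, for $u\in\T$ (so that $u^{-1}=\bar u$), exactly $\overline{\sum_n\epsi(h_n)u^{n}}=\overline{\epsi(H(u))}$. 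Hence on the circle
\begin{equation*}
\Phi(u)=\frac{\epsi(H(u))}{\overline{\epsi(H(u))}},
\end{equation*}
which has modulus $1$ wherever the numerator is nonzero; the convergence hypothesis on $\epsi(H(u))$ in a neighbourhood of $\T$ guarantees this holds almost everywhere (in fact everywhere on $\T$ outside the finitely many zeros of the analytic function $\epsi(H(u))$). This establishes $|\Phi(u)|\equiv1$.

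Next I would deduce \eqref{eq4.G} from \eqref{eq4.F}. By \eqref{eq4.E} we have $\Phi'(u)=(\Phi(u^{-1}))^{-1}$; using $|\Phi|\equiv1$ and $u^{-1}=\bar u$ on $\T$, the relation $(\Phi(u^{-1}))^{-1}=\overline{\Phi(u)}$ holds pointwise a.e., so $\Phi'=\overline{\Phi}$ as elements of $L^2(\T)$. Passing to Fourier coefficients this gives $\wh{\Phi'}(k)=\overline{\wh\Phi(k)}$ for every $k$ (here one uses that complex conjugation of an $L^2(\T)$ function conjugates and reflects its Fourier coefficients, and that $\overline{\Phi}$ is exactly the reflection-conjugate of $\Phi$ built into the definition \eqref{eq4.E}). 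Substituting $\wh{\Phi'}(y+n+\tfrac12)=\overline{\wh\Phi(y+n+\tfrac12)}$ into \eqref{eq4.F} yields precisely \eqref{eq4.G}.

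Finally I would identify the kernel \eqref{eq4.G} as a genuine orthogonal-projection kernel with the claimed basis. Set $\wh\Phi_n(x):=\wh\Phi(x+n+\tfrac12)$; since $|\Phi|\equiv1$ and $\Phi\in L^2(\T)$, Proposition \ref{prop4.A}(ii) applies with $\varphi=\Phi$, so the subspace $\mathcal L_\Phi=\Phi\cdot H^2(\T)$ is simply invariant and $\{\check S^n\Phi:n\ge0\}=\{u^n\Phi:n\ge0\}$ is an orthonormal family (orthonormality because $|\Phi|\equiv1$ makes multiplication by $\Phi$ an isometry carrying the orthonormal basis $\{u^n:n\ge0\}$ of $H^2(\T)$ to it). Transporting through the Fourier identification $f\mapsto\wh f$ that sends $u^m$ to the delta function, the images $\wh\Phi_n$ form an orthonormal basis of the corresponding subspace of $\ell^2(\Z')$, and formula \eqref{eq4.G} is exactly the matrix $\sum_n\wh\Phi_n(x)\overline{\wh\Phi_n(y)}$ of the orthogonal projection onto their closed span. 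Thus \eqref{eq4.G} is a projection kernel with orthonormal basis $\{\wh\Phi_n\}$, completing the proof.

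The main obstacle, and the only point requiring genuine care rather than bookkeeping, is the passage from the \emph{formal}/analytic identities \eqref{eq4.D}–\eqref{eq4.E} to honest $L^2(\T)$ and Fourier-coefficient statements: one must confirm that $\Phi$ genuinely lies in $L^2(\T)$ and that the relation $\Phi'=\overline\Phi$ holds in the almost-everywhere sense needed for the Fourier-coefficient comparison, since \eqref{eq4.E} is stated as an identity of boundary functions and the zeros of $\epsi(H(u))$ must be checked not to obstruct unimodularity on a set of positive measure. Once $\Phi\in L^2(\T)$ with $|\Phi|\equiv1$ a.e.\ is secured, the orthonormality and the projection-kernel identification follow mechanically from Proposition \ref{prop4.A}(ii) via the Fourier isometry.
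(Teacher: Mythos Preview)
Your overall approach is sound and matches what the paper intends: the paper gives no explicit proof of this Corollary, treating it as immediate, and the remark following it points to exactly the identification with \eqref{eq4.B} that you carry out via Proposition~\ref{prop4.A}(ii).

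There is, however, a genuine slip in your second paragraph. The pointwise relation you assert, $(\Phi(u^{-1}))^{-1}=\overline{\Phi(u)}$, is not true in general: unimodularity of $\Phi$ gives only $(\Phi(u^{-1}))^{-1}=\overline{\Phi(u^{-1})}$, and there is no reason for $\Phi(u^{-1})$ to coincide with $\Phi(u)$. Consequently the claim ``$\Phi'=\overline{\Phi}$ as elements of $L^2(\T)$'' is false (and if it were true it would yield $\wh{\Phi'}(k)=\overline{\wh\Phi(-k)}$, not what you want). The correct identity is $\Phi'(u)=\overline{\Phi(u^{-1})}$ on $\T$. Passing to Fourier coefficients, the substitution $u\mapsto u^{-1}$ reflects the index and complex conjugation conjugates-and-reflects, so the two reflections cancel and one obtains exactly $\wh{\Phi'}(k)=\overline{\wh\Phi(k)}$, the relation you need for \eqref{eq4.G}. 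Your parenthetical remark about a ``reflection-conjugate'' suggests you were reaching for this, but as written the argument does not parse. With this one correction the rest of the proof goes through.
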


Comparing \eqref{eq4.G} with \eqref{eq4.B} we see that these two expressions are the same, up to a minor adjustment: one has to make a change of variables $a=-(x+\tfrac12)$ and set $\varphi(u)=\Phi(u^{-1})$.  Thus, the Schur measures with $\epsi'=\bar\epsi$ fit into the formalism described in the previous subsection.

\begin{example}
The z-measure $M^\z_\xi$ (Definition \ref{def2.B}) is a Schur measure corresponding to  the specializations
$$
\epsi(p_k)=z\xi^{k/2}, \quad \epsi'(p_k)=z'\xi^{k/2}, \qquad k=1,2,\dots,
$$
and the corresponding function \eqref{eq4.D} is
\begin{equation}\label{eq4.H}
\Phi(u)=\Phi^\z_\xi(u):=\frac{(1-u^{-1}\xi^{\frac12})^{z'}}{(1-u\xi^{\frac12})^z}.
\end{equation}
\end{example}

Our final remark concerns the limit transition as $\xi\to1$. As pointed out in Subsection \ref{sect2.2}, in this limit the z-measures $M^\z_\xi$ converge to the gamma kernel measure $M^\z$. Therefore, it is natural to examine the limit of $\Phi^\z_\xi$. It is given by 
$$
\Phi^\z(u):=\frac{(1-u^{-1})^{z'}}{(1-u)^z}.
$$
This function is discontinuous at $u=1$ and hence cannot be extended to a holomorphic function in an annulus around of $\T$. This is related to the fact that the gamma kernel measure is not a Schur measure.

However, the function $\Phi^\z(u)$ is still integrable on $\T$, and one can prove that its Fourier coefficients are given by
\begin{equation*}
\wh{\Phi^\z}(k)=\frac{\sin(\pi z')\Ga(1+z-z')}{\pi}\,\frac{\Ga(k+z')}{\Ga(k+z+1)}.
\end{equation*}
Likewise,
\begin{equation*}
\wh{\Phi^{(z', z)}}(k)=\frac{\sin(\pi z)\Ga(1+z'-z)}{\pi}\,\frac{\Ga(k+z)}{\Ga(k+z'+1)}.
\end{equation*}
Substituting these two expressions into \eqref{eq4.F} instead of $\wh\Phi(\ccdot)$ and $\wh{\Phi'}(\ccdot)$ we obtain precisely the series expansion \eqref{eq3.B} of the modified gamma kernel. 

It follows that in the case of the principal series, when $z'=\bar z$, the modified gamma kernel fits into the formalism of Subsection \ref{sect4.1}.

\section{Multiplicative functionals}\label{sect5}

In this section $\X$ is a countable set with no additional structure. As usual, we set $\Om:=\{0,1\}^\X$ and denote by $\ell^2(\X)$ the coordinate Hilbert space with the distinguished orthonormal basis $\{e_x\}$ labeled by $\X$. Given a closed  subspace $L$ of $\ell^2(\X)$, we denote by $K^L$ the operator of orthogonal projection onto $L$. Let $K^L(x,y)=(K^L e_y,e_x)$ be the matrix of $K^L$. The determinantal measure on $\Om$ with the correlation kernel $K^L(x,y)$ will be denoted by $M[L]$.  

Suppose $\al(x)$ is a complex-valued function on $\X$ such that its modulus $|\al(\ccdot)|$ is bounded away from $0$ and $\infty$. Then the subspace $\al L:=\{\al f: f\in L\}$ is closed in $\ell^2(\X)$. Our aim in this section is to show that, under suitable assumptions on the function $\al$, the measure $M[\al L]$ is absolutely continuous with respect to $M[L]$. Moreover, the corresponding Radon--Nikod\'ym derivative can be described explicitly --- it is given by what is called a normalized multiplicative functional. For the precise formulation see Theorem \ref{thm5.A} below. This theorem is a particular case of a more general result contained in \cite{Bu} (see there Proposition 4.2 and related material). However, we decided to present a proof, because in the case of discrete space $\X$ (which is of interest to us), the arguments of \cite{Bu} can be greatly simplified. 

\subsection{A transformation of projection operators}

Let $H$ be a complex Hilbert space, $L$ be a closed subspace of $H$ and $K$ be the operator of orthogonal projection onto $L$. Next, let $A$ be a bounded invertible operator on $H$. Then the subspace $\wt L:=AL$ is closed. Let $\wt K$ denote the operator of orthogonal projection onto $\wt L$. 

\begin{lemma}\label{lemma5.A}
The following formulas hold
\begin{equation}\label{eq5.A}
\wt K=AK(1+(A^*A-1)K)^{-1}A^*=A(1+K(A^*A-1))^{-1}KA^*.
\end{equation}
\end{lemma}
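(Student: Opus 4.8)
\textbf{Proof plan for Lemma \ref{lemma5.A}.}

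The plan is to identify $\wt K$ as the orthogonal projection onto $\wt L = AL$ by exhibiting a candidate operator and verifying the two defining properties of an orthogonal projection: idempotence together with self-adjointness, and the correct range. Let me denote the candidate by
\begin{equation*}
P := AK(1+(A^*A-1)K)^{-1}A^*.
\end{equation*}
First I would address the well-definedness of $P$, that is, the invertibility of $1+(A^*A-1)K$. Writing this operator as $1 - K + A^*A K$, one sees that on $\Ran(K)=L$ it acts as $A^*A$ restricted and composed with the projection, while on $\operatorname{Ker}(K)=L^\perp$ it acts as the identity. The cleanest route is to note that $1+(A^*A-1)K = (1-K) + A^*AK$ and to check directly that its inverse exists because $A$ is bounded and invertible (so $A^*A$ is bounded, positive, invertible), giving a bounded inverse; the two displayed forms of $\wt K$ in \eqref{eq5.A} correspond to the two algebraic rearrangements $AK(1+(A^*A-1)K)^{-1} = A(1+K(A^*A-1))^{-1}K$, which I would verify by the standard resolvent-type identity $K(1+(A^*A-1)K)^{-1} = (1+K(A^*A-1))^{-1}K$ (both sides being the unique operator $X$ with $(1+K(A^*A-1))X = K$).

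Next I would verify the three properties. \emph{Self-adjointness}: since $K=K^*$ and the middle factor is of the form $(1+(A^*A-1)K)^{-1}$, taking adjoints turns $P$ into $A(1+K(A^*A-1))^{-1}KA^*$, which is exactly the second expression in \eqref{eq5.A}; so once the equality of the two forms is established, $P^*=P$ follows. \emph{Range}: clearly $\Ran(P)\subseteq \Ran(A K)\subseteq AL=\wt L$, and conversely for $f\in L$ I would exhibit a preimage showing $Af\in\Ran(P)$, using that $K$ is the identity on $L$ and that the inverted factor preserves the relevant decomposition; this gives $\Ran(P)=\wt L$. \emph{Idempotence}: compute $P^2$ and use the identity $K(1+(A^*A-1)K)^{-1}A^*A K = K - K(1+(A^*A-1)K)^{-1}K \cdot(\text{correction})$ — more cleanly, substitute the middle-factor identity so that the two inner copies of $A^*A$ telescope against $(1+(A^*A-1)K)^{-1}$, reducing $P^2$ to $P$. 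An operator that is self-adjoint, idempotent, and has range exactly $\wt L$ is the orthogonal projection onto $\wt L$, hence equals $\wt K$.

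The main obstacle I expect is the idempotence computation: the factor $A^*A$ sandwiched between the two middle inverses must be made to cancel correctly, and the temptation is to manipulate $(1+(A^*A-1)K)^{-1}$ formally without justifying that all products land in the right subspaces. I would handle this by consistently using the single algebraic identity $K(1+(A^*A-1)K)^{-1}=(1+K(A^*A-1))^{-1}K$ (and its adjoint), which lets every computation be carried out at the level of bounded operators with no appeal to spectral decompositions, and which simultaneously delivers both displayed forms in \eqref{eq5.A}. As a sanity check I would confirm the formula in the trivial case $A=1$, where $P$ reduces to $K$, and in the case where $A$ is unitary, where $A^*A=1$ and $P=AKA^*$, the manifestly correct projection onto $AL$.
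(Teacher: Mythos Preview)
Your approach is correct and will go through, but it differs from the paper's proof in the final verification step. Both you and the paper establish the resolvent identity $K(1+(A^*A-1)K)^{-1}=(1+K(A^*A-1))^{-1}K$ to show the two displayed expressions coincide, but then the arguments diverge. You plan to verify that $P$ is an orthogonal projection by checking self-adjointness, idempotence, and $\Ran(P)=\wt L$; you correctly flag the idempotence computation as the bottleneck (and it does work out, using $K^2=K$ together with the resolvent identity). The paper instead writes $A^*A$ as a $2\times2$ block matrix with respect to $H=L\oplus L^\perp$, which makes the invertibility of $1+(A^*A-1)K$ transparent (it becomes lower-triangular with invertible diagonal blocks), and then checks directly that $P$ restricts to the identity on $\wt L$ (via $\wt K_1 AK=AK$) and vanishes on $\wt L^\perp$ (since $\xi\perp AL$ implies $KA^*\xi=0$). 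This characterization of the orthogonal projection bypasses both the self-adjointness and the idempotence computations entirely. Your route is a bit longer but more algebraic and avoids choosing a decomposition; the paper's route is shorter and makes each step a one-line block-matrix check. Your invertibility sketch (``check directly that its inverse exists because $A$ is bounded and invertible'') is the one place where you would benefit from the paper's block-matrix picture, since the operator $1+(A^*A-1)K$ is not normal and its invertibility is not immediate from spectral considerations alone.
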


\begin{proof}
Set
$$
\wt K_1:=AK(1+(A^*A-1)K)^{-1}A^*, \quad \wt K_2:=A(1+K(A^*A-1))^{-1}KA^*.
$$

First of all, observe that the operators $1+(A^*A-1)K$ and $1+K(A^*A-1)$ are invertible, so that $\wt K_1$ and $\wt K_2$ are well defined. Indeed, write $A^*A$  as a $2\times2$ operator matrix corresponding to the orthogonal decomposition $H=L\oplus L^\perp$:
$$
A^*A=\begin{bmatrix} A_{11} & A_{12}\\ A_{21} & A_{22} \end{bmatrix}.
$$
Then
$$
1+(A^*A-1)K=\begin{bmatrix} A_{11} & 0\\ A_{21} & 1 \end{bmatrix}.
$$
Since $A^*A\ge\epsi$ for $\epsi>0$ small enough, we also have $A_{11}\ge\epsi$ and hence $1+(A^*A-1)K$ is invertible. 
Likewise, $1+K(A^*A-1)$ is invertible, too.

Next, in the same notation we have
$$
K(1+(A^*A-1)K)^{-1}=\begin{bmatrix} A^{-1}_{11} & 0\\ 0 & 1\end{bmatrix}=(1+K(A^*A-1))^{-1}K,
$$
whence $\wt K_1=\wt K_2$, which is the second equality in \eqref{eq5.A}. 

From the very definition of $\wt K_1$ it is readily checked that $\wt K_1 AK=AK$. This shows that $\wt K_1$ acts identically on $\wt L=AL$.

Finally, if a vector $\xi\in H$ belongs to the orthogonal complement $\wt L^\perp$, then $A^*\xi\in L^\perp$ and $KA^*\xi=0$. It follows that $\wt K_2\xi=0$. 

We have proved that the operator $\wt K_1=\wt K_2$ fixes all vectors of $\wt H$ and equals zero on $\wt L^\perp$. Therefore, it coincides with $\wt K$. 
\end{proof}

\subsection{A characteristic property of determinantal measures}

Let $a(x)$ be a function on $\X$ such that $a(x)=1$ outside a finite set. We assign to it the following function on $\Om$:
\begin{equation}\label{eq5.F}
\Psi_a(\om):=\prod_{x\in\om} a(x), \quad \om\in\Om.
\end{equation}
Note that the product is in fact finite. It defines a cylinder function on $\Om$ which we call a \emph{multiplicative functional}. For a measure $M\in\P(\Om)$, we will denote by the symbol $\E_M(\ccdot)$ the corresponding expectation. The following fact is well known.

\begin{lemma}\label{lemma5.B}
Let $M\in\P(\Om)$, $K$ be a bounded operator on $\ell^2(\X)$, and $K(x,y)$ be its matrix. 

{\rm(i)} If $M$ is a determinantal measure and $K(x,y)$ serves as a correlation kernel for $M$, then
\begin{equation}\label{eq5.B}
\E_M(\Psi_a)=\det[1+(a-1)K]
\end{equation}
for any function $a$ on $\X$ such that $a-1$ is finitely supported. 

{\rm(ii)} Conversely, if \eqref{eq5.B} holds for any function $a(\ccdot)$ as above, then $M$ is a determinantal and  $K(x,y)$ serves as its correlation kernel. 
\end{lemma}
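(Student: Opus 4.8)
The plan is to reduce everything to a single finite expansion of the multiplicative functional and then match it, term by term, with the corresponding expansion of the determinant. Write $b:=a-1$, a function supported on a finite set $S\subset\X$. Since $a\equiv1$ off $S$, for $\om\in\Om$ we have
$$
\Psi_a(\om)=\prod_{x\in\om\cap S}(1+b(x))=\sum_{T\subseteq S}\one_{\{T\subseteq\om\}}\prod_{x\in T}b(x),
$$
a finite sum over subsets $T$ of $S$. Taking expectations and using $\E_M(\one_{\{T\subseteq\om\}})=M(\Cyl(T))=\rho_{|T|}(T)$ (see \eqref{eq1.A}, with the convention that the $T=\varnothing$ term is $1$), one obtains, for \emph{any} $M\in\P(\Om)$, the identity
$$
\E_M(\Psi_a)=\sum_{T\subseteq S}\Big(\prod_{x\in T}b(x)\Big)\,\rho_{|T|}(T).
$$
This formula is the common starting point for both parts of the lemma.

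For part (i), I would invoke the determinantal hypothesis $\rho_{|T|}(T)=\det[K(x,y)]_{x,y\in T}$ and compare with the determinant side. The point is that the operator $(a-1)K=bK$ has matrix $b(x)K(x,y)$ supported on the rows $x\in S$, hence is of finite rank; therefore $1+(a-1)K$ is identity-plus-finite-rank and its Fredholm determinant equals the genuine finite determinant $\det[\delta_{xy}+b(x)K(x,y)]_{x,y\in S}$. Expanding this finite determinant by the standard principal-minors identity $\det(I+B)=\sum_{T}\det B_T$ (sum over principal submatrices), with $B=[b(x)K(x,y)]$, and pulling the scalar $b(x)$ out of row $x$ of each minor, gives exactly
$$
\det[1+(a-1)K]=\sum_{T\subseteq S}\Big(\prod_{x\in T}b(x)\Big)\det[K(x,y)]_{x,y\in T}.
$$
Comparing with the expansion of $\E_M(\Psi_a)$ proves \eqref{eq5.B}.

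For part (ii), I would observe that both sides of the asserted equality \eqref{eq5.B} are polynomials in the finitely many variables $\{b(x):x\in S\}$, written above in the basis of square-free monomials $\prod_{x\in T}b(x)$, $T\subseteq S$. Since these monomials are linearly independent, the validity of \eqref{eq5.B} for \emph{all} admissible $a$ forces the coefficients to agree, i.e. $\rho_{|T|}(T)=\det[K(x,y)]_{x,y\in T}$ for every $T\subseteq S$. Letting $S$ range over all finite subsets of $\X$ shows that $M$ is determinantal with correlation kernel $K$; uniqueness of $M$ given its correlation functions (recalled in Subsection \ref{sect1.1}) completes the argument. The main (and really the only) delicate point is the bookkeeping in part (i): justifying that the Fredholm determinant of the a priori infinite-dimensional operator $1+(a-1)K$ collapses to the finite determinant on $\ell^2(S)$, and correctly matching the principal-minor expansion of that finite matrix with the subset sum $\sum_{T}(\prod_{x\in T}b(x))\det[K]_T$. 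Once this combinatorial identification is in place, both parts follow mechanically.
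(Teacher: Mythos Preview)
Your argument is correct and is the standard one. The paper itself does not give a proof of this lemma at all; it simply cites \cite[Lemma 2.6]{Ols-2011}. Your expansion of $\Psi_a(\om)=\prod_{x\in\om\cap S}(1+b(x))$ into the subset sum, combined with the principal-minor expansion of the finite determinant $\det[\delta_{xy}+b(x)K(x,y)]_{x,y\in S}$, is exactly how this fact is usually established, and your justification that $\det[1+(a-1)K]$ collapses to that finite determinant (because $bK$ has range in $\ell^2(S)$) is sound.
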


\begin{proof}
See, e.g., \cite[Lemma 2.6]{Ols-2011}. 
\end{proof}

\subsection{Normalized multiplicative functionals}

Suppose that $a(\ccdot)-1$ is finitely supported (as in Lemma \ref{lemma5.B}) and, moreover, $a(x)>0$ for all $x\in\X$. Given a measure $M\in\P(\Om)$, we define the \emph{normalized multiplicative functional} $\PPsi_a$ by
\begin{equation}\label{eq5.G}
\PPsi_{a,M}(\om):=\frac{\Psi_\al(\om)}{\E_M(\Psi_\al)}, \qquad \om\in\Om.
\end{equation}
It is well defined, because $\Psi_a$ is continuous and strictly positive, which implies that $\E_M(\Psi_a)$ is finite and strictly positive.  

\begin{lemma}\label{lemma5.C}
Let $L\subset\ell^2(\X)$ be a closed subspace and $a(x)$ be a strictly positive function on $\X$ such that $a(\ccdot)-1$ is finitely supported. We have
\begin{equation}\label{eq5.C}
M[\sqrt a L]=\PPsi_{a,M} \cdot M[L].
\end{equation}
\end{lemma}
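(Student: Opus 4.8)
The plan is to combine the operator-theoretic identity from Lemma~\ref{lemma5.A} with the characteristic property of determinantal measures from Lemma~\ref{lemma5.B}(ii). Set $H=\ell^2(\X)$, let $K$ be the orthogonal projection onto $L$, and take $A$ to be the operator of multiplication by $\sqrt a$; since $a$ is strictly positive and equal to $1$ outside a finite set, $A$ is bounded, selfadjoint, and invertible with bounded inverse, so $A^*=A$ and $A^*A=A^2$ is multiplication by $a$. Then $\wt L=\sqrt a\,L$, and writing $\wt K$ for the projection onto $\wt L$, Lemma~\ref{lemma5.A} gives the explicit formula $\wt K=AK(1+(a-1)K)^{-1}A$, where I abbreviate the multiplication operator $A^2-1$ by $(a-1)$. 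The strategy is to verify that the right-hand side of \eqref{eq5.C} is determinantal with correlation kernel $\wt K(x,y)$; by Lemma~\ref{lemma5.B}(ii) it then suffices to check the expectation identity \eqref{eq5.B} for the measure $\PPsi_{a,M}\cdot M[L]$ against every test function $b$ with $b-1$ finitely supported.

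The key computation is the following. For a test function $b$, multiplicativity of $\Psi$ gives $\Psi_b\cdot\Psi_a=\Psi_{ba}$, so
\begin{equation*}
\E_{\PPsi_{a,M}\cdot M[L]}(\Psi_b)=\frac{\E_{M[L]}(\Psi_b\Psi_a)}{\E_{M[L]}(\Psi_a)}=\frac{\E_{M[L]}(\Psi_{ba})}{\E_{M[L]}(\Psi_a)}.
\end{equation*}
Now apply Lemma~\ref{lemma5.B}(i) to the determinantal measure $M[L]$, whose correlation kernel is $K$, in both numerator and denominator: the ratio equals
\begin{equation*}
\frac{\det[1+(ba-1)K]}{\det[1+(a-1)K]}.
\end{equation*}
The goal is to show this ratio equals $\det[1+(b-1)\wt K]$, which by Lemma~\ref{lemma5.B}(ii) identifies $\wt K$ as a correlation kernel of $\PPsi_{a,M}\cdot M[L]$ and hence proves \eqref{eq5.C}. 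This is a purely algebraic identity among Fredholm determinants of finite-rank perturbations of the identity.

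The main work is the determinant manipulation. I would factor the numerator as $1+(ba-1)K=(1+(b-1)A^2K+(a-1)K)$ and aim to pull out the denominator factor on the right. The cleanest route uses the identity $1+(ba-1)K=\bigl(1+(b-1)AKA(1+(a-1)K)^{-1}\bigr)\bigl(1+(a-1)K\bigr)$, which one checks by expanding the product and using $A^2=a$ as a multiplication operator so that $A^2K=aK$ and $(b-1)A\cdot A=(b-1)a$ as multiplication operators commute appropriately with the scalar functions; taking determinants and using multiplicativity of $\det$ then cancels the denominator and leaves $\det[1+(b-1)AK(1+(a-1)K)^{-1}A]=\det[1+(b-1)\wt K]$, where the last equality is exactly the formula for $\wt K$ from Lemma~\ref{lemma5.A}. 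The hard part will be handling the noncommutativity carefully: the multiplication operators by $a,b$ do commute with each other but not with $K$, so the factorization identity must be verified as an operator identity (not a scalar one), and one must confirm that all the operators differ from the identity by finite-rank (hence trace-class) operators so that the Fredholm determinants and their multiplicativity are legitimate. Since $b-1$ and $a-1$ are finitely supported, every perturbation appearing is finite-rank, so this technical point is routine but should be stated explicitly.
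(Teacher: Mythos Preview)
Your approach is essentially identical to the paper's proof, but there is a slip in your factorization identity: the operator appearing should be $aK=A^2K$, not $AKA$. Expanding your right-hand side gives
\[
\bigl(1+(b-1)AKA(1+(a-1)K)^{-1}\bigr)\bigl(1+(a-1)K\bigr)=1+(a-1)K+(b-1)AKA,
\]
and this equals $1+(ba-1)K$ only if $AKA=aK$, which is false since $A$ and $K$ do not commute (their matrices are $\sqrt{a(x)}K(x,y)\sqrt{a(y)}$ versus $a(x)K(x,y)$). The correct factorization, exactly as in the paper, is
\[
1+(ba-1)K=\bigl(1+(b-1)\,aK(1+(a-1)K)^{-1}\bigr)\bigl(1+(a-1)K\bigr).
\]
Taking determinants yields $\det[1+(b-1)aK(1+(a-1)K)^{-1}]$, and the passage from this to $\det[1+(b-1)\wt K]=\det[1+(b-1)AK(1+(a-1)K)^{-1}A]$ requires one more step you gloss over: the cyclic identity $\det(1+XY)=\det(1+YX)$ applied with $X=A$ and $Y=(b-1)AK(1+(a-1)K)^{-1}$, using that the multiplication operators $(b-1)$ and $A$ commute. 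Your own parenthetical ``$A^2K=aK$'' suggests you had the right operator in mind, so the $AKA$ is likely a typo rather than a conceptual error; with these two fixes your argument coincides with the paper's.
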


\begin{proof}
Let $K$ and $\wt K$ be the orthogonal projections onto $L$ and $\sqrt a L$, respectively. By Lemma \ref{lemma5.A}, applied to the operator of multiplication by $a(\ccdot)$ (which we denote still by $a$), 
$$
\wt K=\sqrt a K(1+(a-1)K)^{-1}\sqrt a.
$$

We abbreviate $M:=M[L]$ and $\wt M:=\PPsi_{a,M} \cdot M[L]$. Note that $\wt M\in\P(\Om)$. By virtue of Lemma \ref{lemma5.B}, it suffices to prove that
$$
\E_{\wt M}(\Psi_b)=\det[1+(b-1)\wt K],
$$
where $b$ is an arbitrary function on $\X$ such that $b-1$ is finitely supported. The idea is to compute $\E_M(\Psi_{ba})$ in two ways. 

First, we have
$$
\E_M(\Psi_{ba})=\det[1+(ba-1)K].
$$

Second, since $\Psi_{ba}=\Psi_b\Psi_a$, we obtain
$$
\E_M(\Psi_{ba})=\det[1+(a-1)K] \E_{\wt M}(\Psi_b).
$$
Comparing the two equalities we obtain
$$
\E_{\wt M}(\Psi_b)=\frac{\det[1+(ba-1)K]}{\det[1+(a-1)K]}.
$$

It remains to check the relation
\begin{equation}\label{eq5.D}
\det[1+(b-1)\wt K]=\frac{\det[1+(ba-1)K]}{\det[1+(a-1)K]}.
\end{equation}
To do this we transform 
$$
1+(ba-1)K=1+(a-1)K+(b-1)aK=\big(1+(b-1)aK(1+(a-1)K)^{-1}\big)(1+(a-1)K).
$$
It follows that the right-hand side of \eqref{eq5.D} equals
$$
\det[1+(b-1)aK(1+(a-1)K)^{-1}].
$$
Since $(b-1)a=a(b-1)$, we may rewrite this as
$$
\det[1+(b-1)\sqrt a K(1+(a-1)K)^{-1}\sqrt a]=\det[1+(b-1)\wt K].
$$
This proves \eqref{eq5.D} and completes the proof. 
\end{proof}

\subsection{Absolute continuity  of determinantal measures}

Let, as above, $M:=M[L]$, where $L$ is a closed subspace of $\ell^2(\X)$, $K$ be the operator of orthogonal projection onto $L$, and $K(x,y)$ be the matrix of $K$. 

The next theorem extends Lemma \ref{lemma5.C} to a wider class of functions $a(x)$. Note that this theorem is a particular case of  Proposition 4.2 in  \cite{Bu}, which, in turn, is a generalization of Proposition 2.1 in Bufetov \cite{buf-era-ms}, previously announced in \cite{buf-umn}. 
 
In the discrete setting the proof is simpler than the general argument contained in \cite{Bu}, \cite{buf-era-ms}.  
For this reason and for completeness of the exposition, we give the proof here.

\begin{theorem}\label{thm5.A}
Let $M=M[L]$, where $L$ is a closed subspace of $\ell^2(\X)$, and let $\al(x)$ be a non-vanishing complex-valued function on $\X$ such that  $|\al(\ccdot)|^2-1$ belongs to $\ell^2(\X)$. 

The measure $M[\al L]$ is absolutely continuous with respect to $M$ and one has
\begin{equation}\label{eq5.H}
M[\al L]=\PPsi_{|\al|^2,M}\cdot M.
\end{equation}
In particular, $M[\al L]$ is absolutely continuous with respect to $M$.
\end{theorem}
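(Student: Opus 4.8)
The plan is to reduce the statement to Lemma~\ref{lemma5.C} by a truncation argument, the point being to weaken the hypothesis ``$a-1$ finitely supported'' of that lemma to $a-1\in\ell^2(\X)$, where throughout I write $a:=|\al|^2$. First I would note that $M[\al L]$ depends only on $|\al|$: writing $\al=|\al|\,u$ with $u(\ccdot)$ unimodular and letting $U$ be the unitary operator of multiplication by $u$, we have $\al L=U\,(|\al|L)$, so the orthogonal projection onto $\al L$ is $U K^{|\al|L}U^*$, whose matrix is $u(x)\,K^{|\al|L}(x,y)\,u(y)^{-1}$. This is a gauge transform of the matrix of $K^{|\al|L}$, so by the elementary invariance recalled at the start of Section~\ref{sect3} it is a correlation kernel for the same measure; hence $M[\al L]=M[|\al|\,L]=M[\sqrt a\,L]$. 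Since $\PPsi_{|\al|^2,M}=\PPsi_{a,M}$ also depends only on $a$, it suffices to prove $M[\sqrt a\,L]=\PPsi_{a,M}\cdot M$ for a strictly positive $a$ with $a-1\in\ell^2(\X)$; note that $a>0$ and $a-1\in\ell^2$ force $\inf_\X a>0$ and $\sup_\X a<\infty$.

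Next I would truncate. Fix an exhaustion $\X_1\subset\X_2\subset\cdots$ of $\X$ by finite sets and set $a_n:=a$ on $\X_n$ and $a_n:=1$ off $\X_n$. Then each $a_n-1$ is finitely supported, the $a_n$ are uniformly bounded away from $0$ and $\infty$, and $a_n-1\to a-1$ in $\ell^2(\X)$. Lemma~\ref{lemma5.C} applies to each $a_n$ and gives $M[\sqrt{a_n}\,L]=\PPsi_{a_n,M}\cdot M$, and the task becomes to pass to the limit on both sides.

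On the operator side, let $A,A_n$ denote multiplication by $\sqrt a,\sqrt{a_n}$ and put $\wt K:=K^{\sqrt a\,L}$, $\wt K_n:=K^{\sqrt{a_n}\,L}$. By Lemma~\ref{lemma5.A}, $\wt K_n=A_nK(1+(A_n^*A_n-1)K)^{-1}A_n^*$ and likewise for $\wt K$. Since $\sqrt a-1$ and $a-1$ lie in $\ell^2(\X)$, the differences $A_n-A$ and $A_n^*A_n-A^*A$ are multiplication by functions whose $\ell^2$-norms tend to $0$, hence tend to $0$ in Hilbert--Schmidt norm; together with the uniform invertibility of $1+(A_n^*A_n-1)K$ (visible from the block form in the proof of Lemma~\ref{lemma5.A}) this yields $\wt K_n\to\wt K$ strongly. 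Now fix any $b$ with $b-1$ finitely supported. Because multiplication by $b-1$ has finite rank, strong convergence already forces $(b-1)\wt K_n\to(b-1)\wt K$ in trace norm, so the Fredholm determinants converge:
\[
\E_{M[\sqrt{a_n}L]}(\Psi_b)=\det[1+(b-1)\wt K_n]\longrightarrow \det[1+(b-1)\wt K]=\E_{M[\sqrt a\,L]}(\Psi_b),
\]
the outer equalities being Lemma~\ref{lemma5.B}(i). Thus $\PPsi_{a_n,M}\cdot M=M[\sqrt{a_n}L]$ converges to $M[\sqrt a\,L]$ tested against every multiplicative functional $\Psi_b$.

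It remains to upgrade this to convergence of densities, and here lies the main obstacle. The determinants $\det[1+(a_n-1)K]=\E_M(\Psi_{a_n})$ need not converge: the operator $(a-1)K$ is only Hilbert--Schmidt, not trace class, so $\det[1+(a-1)K]$ — and with it the naive infinite product $\Psi_a=\prod_{x\in\om}a(x)$ — is in general ill-defined, and it is precisely the hypothesis $a-1\in\ell^2$ that must rescue the situation. I would show that $\{\PPsi_{a_n,M}\}$ is bounded in $L^2(M)$ by computing $\E_M(\PPsi_{a_n,M}^2)=\det[1+(a_n^2-1)K]/\det[1+(a_n-1)K]^2$ and checking that the potentially divergent terms linear in $a_n-1$ cancel between numerator and denominator, leaving only Hilbert--Schmidt-controlled quadratic contributions: the combination is governed by the regularized determinant $\detreg$ and stays bounded, uniformly in $n$, in terms of $\|a-1\|_{\ell^2}$. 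An analogous estimate of the cross terms shows that $\{\PPsi_{a_n,M}\}$ is Cauchy in $L^2(M)$; its limit then converges $M$-almost surely and in $L^1(M)$, is identified with the normalized infinite product $\PPsi_{a,M}=\PPsi_{|\al|^2,M}$, and is a genuine probability density. Finally, $L^1$-convergence of the densities promotes the weak convergence above to convergence in total variation, whence the weak limit $M[\sqrt a\,L]$ equals $\PPsi_{a,M}\cdot M$; in particular $M[\al L]=M[\sqrt a\,L]\ll M$, as claimed. The analytic heart — the uniform $L^2$-control of the normalized functionals past the failure of trace-class-ness — is exactly what the general results of \cite{Bu}, \cite{buf-era-ms} are built to supply, and what the regularized determinant renders tractable in the present discrete setting.
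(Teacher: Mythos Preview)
Your proposal is correct and follows essentially the same route as the paper: reduce to positive $\al$ by a gauge argument, truncate $a=|\al|^2$, apply Lemma~\ref{lemma5.C} at each stage, and pass to the limit on both sides using regularized determinants to control the densities. The paper packages the density convergence as a separate Proposition~\ref{prop5.A} and streamlines two points you handle more laboriously: it gets $\wt K_n\to\wt K$ in operator \emph{norm} simply from $a_n\to a$ in sup norm (since $a-1\in\ell^2$ forces $a(x)\to1$ at infinity), and it introduces the auxiliary multiplicative functional $\wt\Psi_{b,M}:=\Psi_b/\exp(\tr((\log b)K))$, whose multiplicativity $\wt\Psi_{bb',M}=\wt\Psi_{b,M}\wt\Psi_{b',M}$ reduces the Cauchy estimate to bounding $\Vert\wt\Psi_{a_m/a_n,M}-1\Vert_2$, avoiding the cross-term computation you sketch.
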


We will deduce the theorem from the following proposition, which is of independent interest. 

\begin{proposition}\label{prop5.A}
Fix a sequence $\X_1\subset\X_2\subset\dots$ of finite subsets of\/ $\X$ such that their union is the whole set $\X$. Given a function $a(x)$ on $\X$, we define its $n$th truncation $a_n(x)$, where $n=1,2,\dots$, by
$$
a_n(x):=\begin{cases} a(x), & x\in\X_n,\\
 1, & x\in\X\setminus\X_n. \end{cases}
$$
Next, suppose that $a(x)$ is strictly positive on $\X$ and $a(\ccdot)-1$ belongs to $\ell^2(\X)$. 

Then there exists a limit
\begin{equation}\label{eq5.E}
\PPsi_{a,M}:=\lim_{n\to\infty} \PPsi_{a_n, M}
\end{equation}
in the norm topology of the Banach space $L^1(\Om,M)$, and this limit does not depend on the choice of the sequence  $\{\X_n\}$.
\end{proposition}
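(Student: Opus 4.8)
The plan is to establish convergence of the normalized multiplicative functionals $\PPsi_{a_n,M}$ in $L^1(\Om,M)$ by exhibiting them as a Cauchy sequence, and then to observe that the limit is independent of the exhausting sequence $\{\X_n\}$. Recall from \eqref{eq5.G} that $\PPsi_{a_n,M}=\Psi_{a_n}/\E_M(\Psi_{a_n})$, where $\Psi_{a_n}(\om)=\prod_{x\in\om}a_n(x)$ is a genuine (finite) multiplicative functional because $a_n-1$ is finitely supported. Since each $\PPsi_{a_n,M}$ is a strictly positive density integrating to $1$ against $M$, the natural strategy is to show that $\{\PPsi_{a_n,M}\}$ converges in $L^1$ by controlling the ratios $\PPsi_{a_{n+1},M}/\PPsi_{a_n,M}$.

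The key computational step is to compute $\E_M(\Psi_{a_n})$ via Lemma \ref{lemma5.B}(i), giving $\E_M(\Psi_{a_n})=\det[1+(a_n-1)K]$, a regularized determinant that makes sense because $a_n-1$ has finite support. The hypothesis $a(\ccdot)-1\in\ell^2(\X)$ is designed precisely so that, as $n\to\infty$, the operator $(a-1)K$ lies in the Hilbert--Schmidt class; more relevantly, $(a^2-1)K$ (equivalently the operator governing the transformation) is Hilbert--Schmidt, so that the relevant Carleman/Fredholm regularized determinant $\detreg[1+(a-1)K]$ converges. First I would write $\PPsi_{a_n,M}(\om)$ in product form, isolate the contribution of the sites where $a_n$ and $a_{n+1}$ differ, and show that $\log\PPsi_{a_n,M}$ forms a Cauchy sequence in $L^2(\Om,M)$ (hence the functionals themselves converge, after passing through the exponential, in $L^1$). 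Concretely, I expect to realize $\log\Psi_{a_n}(\om)=\sum_{x\in\om}\log a(x)\,\one_{\X_n}(x)$ as a linear statistic, compute its $M$-variance in terms of the kernel $K$, and use the $\ell^2$ hypothesis together with the bound $0\le K\le 1$ to show these variances converge and the tails vanish. Standard bounds on determinantal linear statistics (the variance of $\sum_{x\in\om}f(x)$ is $\sum_x f(x)^2 K(x,x)(1-K(x,x)) - \sum_{x\ne y}f(x)f(y)|K(x,y)|^2$, which is controlled by $\sum_x f(x)^2$) give the Cauchy estimate.

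For independence of the sequence $\{\X_n\}$, the clean argument is to note that any two exhausting sequences can be interleaved into a third exhausting sequence; since the whole sequence converges for the interleaved exhaustion, both subsequential limits agree. Alternatively, one identifies the limit intrinsically as $\Psi_a/\detreg[\,\cdot\,]$ up to the appropriate regularized normalization, which manifestly does not reference $\{\X_n\}$; either route closes the argument.

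The main obstacle I anticipate is the passage from $L^2$-convergence of the logarithms (linear statistics) to $L^1$-convergence of the functionals themselves, since the exponential is not Lipschitz and the densities are unbounded. Handling this requires uniform integrability of the sequence $\{\PPsi_{a_n,M}\}$, which should follow from a uniform bound on the normalizing constants $\E_M(\Psi_{a_n^t})$ for $t$ slightly larger than $1$ (equivalently, a uniform bound on a higher moment), itself reducible to convergence of the corresponding regularized determinants. Establishing this uniform moment bound, using that $|\al|^2-1\in\ell^2$ controls the relevant Hilbert--Schmidt norms uniformly in $n$, is the technical heart of the argument; once it is in place, uniform integrability plus almost-sure convergence (along a subsequence) upgrades to the claimed $L^1$-convergence.
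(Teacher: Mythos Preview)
Your strategy is workable but genuinely different from the paper's. The paper avoids logarithms and uniform integrability entirely by introducing a \emph{multiplicatively normalized} functional
\[
\wt\Psi_{b,M}(\om):=\dfrac{\Psi_b(\om)}{e^{\tr((\log b)K)}},
\]
which, unlike $\PPsi_{b,M}$, satisfies $\wt\Psi_{bb',M}=\wt\Psi_{b,M}\wt\Psi_{b',M}$. This multiplicativity yields the direct estimate
\[
\Vert\wt\Psi_{a_m,M}-\wt\Psi_{a_n,M}\Vert_1=\Vert\wt\Psi_{a_n,M}(\wt\Psi_{a_m/a_n,M}-1)\Vert_1\le\Vert\wt\Psi_{a_n,M}\Vert_2\,\Vert\wt\Psi_{a_m/a_n,M}-1\Vert_2,
\]
and each $L^2$ norm on the right is again the expectation of a multiplicative functional (of $a_n^2$, $a_m/a_n$, $(a_m/a_n)^2$), computed in closed form as $\detreg(1+(b-1)K)\cdot\exp(\tr((b-1-\log b)K))$. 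Hilbert--Schmidt continuity of $\detreg$ and the observation $a-1-\log a\in\ell^1(\X)$ then finish the Cauchy estimate directly in $L^1$, with no detour through uniform integrability. A separate check that $\E_M(\wt\Psi_{a_n,M})$ converges to a strictly positive limit recovers the convergence of $\PPsi_{a_n,M}$. Your interleaving argument for independence of $\{\X_n\}$ is exactly what the paper does.

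One point in your sketch needs care: controlling the variance of the linear statistic $\log\Psi_{a_n}$ shows only that the \emph{centered} logarithms $\log\Psi_{a_n}-\E_M(\log\Psi_{a_n})$ are Cauchy in $L^2$. To conclude that $\log\PPsi_{a_n,M}=\log\Psi_{a_n}-\log\E_M(\Psi_{a_n})$ is Cauchy you must also show that the constants $\log\E_M(\Psi_{a_n})-\E_M(\log\Psi_{a_n})$ converge; neither sequence individually converges when $a-1\notin\ell^1$. This reduces to the same two facts the paper uses (convergence of $\detreg(1+(a_n-1)K)$ and summability of $a-1-\log a$), so the gap is fillable, but it is precisely the place where the paper's multiplicative normalization is cleaner than the additive one you propose.
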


\begin{remark}\label{rem5.A}
Note that the product \eqref{eq5.F}, which defines the functional $\Psi_a(\om)$, converges for all $\om\in\Om$ only if  $a(x)$ satisfies the stronger condition $a(\ccdot)-1\in\ell^1(\X)$. Proposition \ref{prop5.A} provides a way to regularize a possibly divergent infinite product, for $M$-almost all $\om$'s and up to an overall constant factor. In a number of concrete situations (including that of Theorem \ref{thm6.A} below), one just needs to deal with functions $a(x)$ for which $a(\ccdot)-1$ is in $\ell^2(\X)$ but not in $\ell^1(\X)$. 
\end{remark}

\begin{proof}[Derivation of Theorem \ref{thm5.A} from Proposition \ref{prop5.A}]
Suppose first that $\al(x)$ is real-valued and positive. Set $a(x):=\al^2(x)$ and write $\sqrt a$ instead of $\al$. By Lemma \ref{lemma5.C},
$$
M[\sqrt{a_n} L]= \PPsi_{a_n,M}\cdot M.
$$
Denote by $\wt K$ and $\wt K_n$ the operators of orthogonal projections on the subspaces $\sqrt a L$ and  $\sqrt{a_n}L$, respectively. The condition on $a(\ccdot)$ implies that $a(x)\to1$ at infinity, which in turn implies that the functions $a_n(x)$ approximate the function $a(x)$ in the supremum norm. Therefore, $\wt K_n\to\wt K$ in the norm topology, which in turn implies that the measures $M[\sqrt{a_n} L]$ weakly converge to the measure $M[\sqrt a L]$. On the other hand, \eqref{eq5.E} implies that the measures $\PPsi_{a_n,M}\cdot M$ weakly converge to the measure $\PPsi_{a,M}\cdot M$. This proves \eqref{eq5.H} in the case of positive function $\al(x)$. The general case is reduced to that case, because if $\phi(x)$ is a complex-valued function with $|\phi(x)|\equiv1$, then $M[\al\phi L]=M[\al L]$. Indeed, the corresponding orthogonal projections differ from each other by conjugation by the operator of multiplication by $\phi$, which does not affect the correlation functions. 
\end{proof}

\begin{proof}[Proof of Proposition \ref{prop5.A}]
If the existence of the limit \eqref{eq5.E} is already established, then it is easy to see that it does not depend on the choice of the sequence of nested finite subsets. Indeed, given two different sequences,  $\{\X'_n\}$ and $\{\X''_n\}$, it suffices to build a third sequence $\{\X_n\}$ by taking alternately subsets from $\{\X'_n\}$ and $\{\X''_n\}$ with sufficiently large numbers.
 
We proceed to the proof of \eqref{eq5.E}. Let $\Vert\ccdot\Vert_1$ stand for the norm of the Banach space $L^1(\Om,M)$. It suffices to show that
\begin{equation}\label{eq5.J}
\lim_{m,n\to\infty}\Vert \PPsi_{a_m,M}-\PPsi_{a_n,M}\Vert_1=0.
\end{equation}

\emph{Step} 1. In this step we reduce \eqref{eq5.J} to two claims concerning a different version of multiplicative functionals. 

Let $b(x)$ be a positive function on $\X$ such that $b(\ccdot)-1$ is finitely supported. We set 
\begin{equation}\label{eq5.I}
\wt\Psi_{b,M}(\om):=\frac{\Psi_b(\om)}{e^{\tr((\log b)K)}}, \quad \om\in\Om,
\end{equation}
where $K$ is the operator of orthogonal projection onto $L$. 

For our purpose,  the functionals $\wt\Psi_{b,M}$ are more convenient to deal with than the functionals $\PPsi_{b,M}$. This is due to the fact that $\wt\Psi_{b,M}$, like $\Psi_b$,  is multiplicative with respect to $b$. Namely, if $b'$ is another function of the same type, then we obviously have
\begin{equation}\label{eq5.I1}
\wt\Psi_{bb',M}=\wt\Psi_{b,M}\wt\Psi_{b',M}. 
\end{equation}

We are going to prove that 
\begin{equation}\label{eq5.K}
\lim_{m,n\to\infty}\Vert \wt\Psi_{a_m,M}-\wt\Psi_{a_n,M}\Vert_1=0
\end{equation}
and 
\begin{equation}\label{eq5.L}
\text{there exists a limit $\lim_{n\to\infty}\E_M(\wt\Psi_{a_n,M})>0$}.
\end{equation}
Because
\begin{equation*}
\PPsi_{b,M}=\dfrac{\wt\Psi_{b,M}}{\E_M(\wt\Psi_{b,M})},
\end{equation*}
the desired claim \eqref{eq5.J} will follow from \eqref{eq5.K} and \eqref{eq5.L}. 
\smallskip

\emph{Step} 2. Let again $b(x)$ be an arbitrary positive function on $\X$ such that $b(\ccdot)-1$ is finitely supported. In this step we will write $\E(\wt\Psi_{b,M})$ in a form convenient for further applications. Recall the definition of the \emph{Hilbert--Carleman regularized determinant} $\detreg(\ccdot)$, see \cite{GGK} or \cite{GK}. For a trace class operator $A$ on a Hilbert space, the definition is
$$
\detreg(1+A):=\det(1+A)\exp(-\tr A).
$$
It is well known (\cite{GGK}, \cite{GK}) that this expression is continuous in the Hilbert--Schmidt metric and extends by continuity to the space of Hilbert--Schmidt operators. Further, if $A$ is selfadjoint, then
\begin{equation}\label{eq5.N}
\detreg(1+A)=\prod(1+\la_i)e^{-\la_i},
\end{equation}
where $\{\la_i\}$ are the eigenvalues of $A$ counted with their multiplicities (the product converges because $A$  is Hilbert--Schmidt). We will use these facts shortly.

With this definition in hand we may write
\begin{equation}\label{eq5.M}
\E_M(\wt\Psi_{b,M})=\detreg(1+(b-1)K)\cdot \exp(\tr((b-1-\log b)K)).
\end{equation}
Indeed,  we have
\begin{equation*}
\E_M(\wt\Psi_{b,M})=\dfrac{\det(1+(b-1)K)}{e^{\tr((\log b)K)}}=
\dfrac{\det(1+(b-1)K)}{\exp(\tr((b-1)K))}\cdot \exp(\tr((b-1-\log b)K)),
\end{equation*}
where the first equality follows from \eqref{eq5.I} and \eqref{eq5.B}, and the second equality is a trivial transformation; the final result is precisely the right-hand side of \eqref{eq5.M}. 
\smallskip

\emph{Step} 3. Now we can prove \eqref{eq5.L}. More precisely, we will show that   
\begin{equation}\label{eq5.L1}
\lim_{n\to\infty}\E_M(\wt\Psi_{a_n,M}) = \detreg(1+(a-1)K)\cdot \exp(\tr((a-1-\log a)K))
\end{equation}
and that the right-hand side is strictly positive. 

Indeed, the condition $a(\ccdot)-1\in\ell^2(\X)$ precisely means that the operator of multiplication by the function $a(x)-1$ is Hilbert--Schmidt. Therefore, the operator $(a-1)K$ is Hilbert--Schmidt, too. It follows that $\detreg(1+(a-1)K)$ is well defined. 

By the very definition of the functions $a_n(x)$, the functions $a_n(x)-1$ approximate $a(x)-1$ in the $\ell^2$ metric. This implies that $(a_n-1)K\to (a-1)K$ in the Hilbert--Schmidt metric. Therefore, 
$$
\detreg(1+(a_n-1)K)\to \detreg(1+(a-1)K).
$$

Since the function $a(x)$ is strictly positive and tends to $1$ at infinity, it is bounded away from $0$. It follows that the spectrum of the selfadjoint operator $K(a-1)K$ is bounded from below from $-1$, so that 
$$
\det(1+K(a-1)K)>0
$$
(here we also used \eqref{eq5.N}). Therefore,  
$$
\detreg(1+(a-1)K)=\detreg(1+(a-1)K^2)=\detreg(1+K(a-1)K)>0
$$
(the latter equality is one more property of the regularized determinant). 

It remains to check that  $\tr((a-1-\log a)K)$ is well defined and 
$$
\tr((a_n-1-\log a)K)\to \tr((a-1-\log a)K).
$$
Both claims follow from the fact that the function  $a(x)-1-\log a(x)$ belongs to $\ell^1(\X)$ and
$$
a_n(\ccdot)-1-\log a(\ccdot)\; \longrightarrow\;  a(\ccdot)-1-\log a(\ccdot)
$$
in the metric of $\ell^1(\X)$. 
\smallskip

\emph{Step} 4. In this last step we prove \eqref{eq5.K}. Let $\Vert\ccdot\Vert_2$ and $(\ccdot,\ccdot)$ denote the norm and scalar product in $L^2(\Om,M)$.  Using the multiplicativity property \eqref{eq5.I1} we have 
\begin{multline*}
\Vert \wt\Psi_{a_m,M}-\wt\Psi_{a_n,M}\Vert_1=\Vert \wt\Psi_{a_n,M}(\wt\Psi_{a_m/a_n,M}-1)\Vert_1
=(\wt\Psi_{a_n,M},\, \vert\wt\Psi_{a_m/a_n,M}-1|)\\
\le \Vert \wt\Psi_{a_n,M}\Vert_2\cdot\Vert(\wt\Psi_{a_m/a_n,M}-1)\Vert_2.
\end{multline*}

Next,
$$
\Vert \wt\Psi_{a_n,M}\Vert_2^2=\E_M(\wt\Psi_{a^2_n,M}),
$$
and this quantity is uniformly bounded by virtue of \eqref{eq5.L1}. 

Finally,
$$
\Vert(\wt\Psi_{a_m/a_n,M}-1)\Vert^2_2=\E_M(\wt\Psi_{(a_m/a_n)^2,M})+1-2\E_M(\wt\Psi_{a_m/a_n,M}).
$$
We have to prove that this quantity goes to $0$ as $m,n\to\infty$, and for this it suffices to prove that 
$$
\E_M(\wt\Psi_{(a_m/a_n)^2,M})\to1, \quad \E_M(\wt\Psi_{a_m/a_n,M})\to1.
$$
To see this we use the fact that the differences $(a_m/a_n)^2-1$ and $a_m/a_n-1$ tend to $0$ in the norm topology of $\ell^2(\X)$, and apply the same argument (based on formula \eqref{eq5.M}) as in the end of step 3.

This completes the proof. 
 \end{proof}

\section{A hierarchy of Palm measures}

\subsection{Conditioning} 

Let, as above,  $\X$ be a countable set and $\Om:=\{0,1\}^\X$. Given a point $p\in\X$, we set 
\begin{equation}\label{eq6.A}
\X(p):=\X\setminus\{p\}, \qquad \Om(p):=\{0,1\}^{\X(p)}.
\end{equation}
Note that there is a natural projection $\Om\to\Om(p)$ sending $\om\in\Om$ to $\om\setminus\{p\}\in\Om(p)$.

Next, we decompose $\Om$ into disjoint union of two cylinder subsets,
$$
\Om=\Om_0(p)\sqcup \Om_1(p),  
$$
where
\begin{equation}\label{eq6.B}
\Om_0(p):=\{\om: p\notin\om\}, \qquad \Om_1(p):=\{\om: p\in\om\}.
\end{equation}

Both these cylinder subsets of $\Om$ are in a natural bijective correspondence with the space $\Om(p)$, induced by the projection $\Om\to\Om(p)$.

\begin{definition}\label{def6.A}
Let $M\in\P(\Om)$. We assign to $M$ two probability measures on $\Om(p)$ as follows. 

(i) The first measure, denoted by $M(p)$, is defined if $M(\Om_1(p))>0$. We restrict  
 $M$ to $\Om_1(p)$, next normalize it, and then take the pushforward under the bijection $\Om_1(p)\to\Om(p)$. The result is $M(p)$. It is called the \emph{reduced Palm measure} of $M$ at point $p$. 
 
(ii) The second measure, denoted by $\wt M(p)$, is defined if $M(\Om_1(p))<1$ (which is equivalent to $M(\Om_0(p))>0$). It is obtained by a similar procedure, only $\Om_1(p)$ should be replaced by $\Om_0(p)$. Thus, $\wt M(p)$ is simply the normalized restriction of $M$ onto the set $\Om_0(p)$.
\end{definition}

To emphasize the duality between the definitions in (i) and (ii), note that in (i) we condition upon the presence of a particle at $p$, while in (ii) we condition upon the presence of a hole.

Further, note that $M(\Om_1(p))=\rho_1(p)$, where $\rho_1$ is the first correlation function of $M$ (see \eqref{eq1.A}). Thus, the conditions $M(\Om_1(p))>0$ and $M(\Om_1(p))<1$ precisely mean that $\rho_1(p)>0$ and $\rho_1(p)<1$, respectively. If $M$ is a determinantal measure with correlation kernel $K(x,y)$, then these  conditions can be reformulated as $K(p,p)>0$ and $K(p,p)<1$, respectively.

Observe that if  $M(p)$ and $\wt M(p)$ are treated as measures on $\Om$ concentrated on the subset $\Om_0(p)\subset \Om$ (here we use the bijection $\Om(p)\leftrightarrow\Om_0(p)$), then $\wt M(p)$ is always absolutely continuous with respect to the initial measure $M$, while for $M(p)$ this may be wrong.

We are going to show that if $M\in\P(\Om)$ belongs to the class of projection kernel measures (see the definition in Subsection \ref{sect1.1}), then so is the measure $M(p)$. 

Fix a closed linear subspace $L\subset \ell^2(\X)$ and let $M[L]\in\P(\Om)$ be the corresponding determinantal measure. Recall that $M[L]$ is defined by the kernel $K^L(x,y)$, which is the matrix of $K^L$, the operator of orthogonal projection onto $L$. 

Let $e_p^\perp\subset\ell^2(\X)$ denote the codimension $1$ subspace orthogonal to the vector $e_p$. Equivalently, if elements of $\ell^2(\X)$ are viewed as functions on $\X$, then $e_p^\perp$ is the subspace of functions vanishing at $p$. There is a natural isomorphism $e_p^\perp\to\ell^2(\X(p))$.  

\begin{definition}\label{def6.B}
Given a closed linear subspace $L\subset\ell^2(\X)$, we assign to it two subspaces in $\ell^2(\X(p))$, denoted by $L(p)$ and $\wt L(p)$:

(i) $L(p)$ is  the image of the intersection $L\cap e_p^\perp$ under the isomorphism  $e_p^\perp\to \ell^2(\X(p))$.

(ii) $\wt L(p)$ is the image of $L$ under the composition of two maps: the projection $\ell^2(\X)\to e_p^\perp$ followed again by the isomorphism  $e_p^\perp\to \ell^2(\X(p))$.
\end{definition}

Note that the condition $K^L(p,p)>0$ means that $L$ is not contained in $e_p^\perp$, while $K^L(p,p)<1$ means that $L$ does not contain $\C e_p$.  

The following proposition is a variation of Shirai-Takahashi \cite[Theorem 6.5]{ST-palm} and Lyons \cite[(6.5)]{Ly-2003}.
\begin{proposition}\label{prop6.A}
Let $\X$ be a countable set, $L$ be a closed linear subspace in $\ell^2(\X)$, and $M=M[L]$ be the corresponding projection kernel measure on\/ $\Om=\{0,1\}^\X$. We fix a point $p\in\X$.

Suppose that $L$ is not contained in $e_p^\perp$, so that the reduced Palm measure $M(p)\in\P(\Om(p))$ exists. Then $M(p)=M[L(p)]$. 
\end{proposition}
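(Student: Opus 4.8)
The plan is to match correlation functions. Since any probability measure on $\Om(p)=\{0,1\}^{\X(p)}$ is uniquely determined by its correlation functions (Subsection \ref{sect1.1}), and since the projection kernel measure $M[L(p)]$ exists because $L(p)$ is a closed subspace, it suffices to show that $M(p)$ is determinantal with correlation kernel equal to the matrix of the orthogonal projection onto $L(p)$. This is the route taken in the cited work of Shirai--Takahashi and Lyons.

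First I would read off the correlation functions of the reduced Palm measure directly from Definition \ref{def6.A}. For pairwise distinct $x_1,\dots,x_n\in\X(p)$, conditioning on a particle at $p$ and then deleting it gives
$$
\rho_n^{M(p)}(x_1,\dots,x_n)=\frac{\rho_{n+1}^{M}(p,x_1,\dots,x_n)}{\rho_1^{M}(p)},
$$
where $\rho^M$ are the correlation functions of $M=M[L]$. The denominator $\rho_1^M(p)=K^L(p,p)$ is strictly positive precisely because $L\not\subset e_p^\perp$, equivalently $K^L e_p\ne0$; this is where the hypothesis enters. I would then substitute the determinantal form $\rho_{n+1}^M(p,x_1,\dots,x_n)=\det[K^L(\,\cdot\,,\cdot\,)]$ over the index set $\{p,x_1,\dots,x_n\}$ and perform one step of Gaussian elimination along the row and column indexed by $p$ (the Schur complement for determinants). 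Since $K^L(p,p)\ne0$, this yields
$$
\rho_{n+1}^M(p,x_1,\dots,x_n)=K^L(p,p)\,\det\Bigl[K^L(x_i,x_j)-\frac{K^L(x_i,p)K^L(p,x_j)}{K^L(p,p)}\Bigr]_{i,j=1}^n,
$$
and dividing by $\rho_1^M(p)=K^L(p,p)$ shows that $M(p)$ is determinantal on $\X(p)$ with correlation kernel
$$
\K^{(p)}(x,y):=K^L(x,y)-\frac{K^L(x,p)K^L(p,y)}{K^L(p,p)},\qquad x,y\in\X(p).
$$

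The final and main step is to recognize $\K^{(p)}$ as the matrix of the projection onto $L(p)$. I would rewrite $\K^{(p)}$ as the matrix of the operator
$$
P:=K^L-\frac{1}{K^L(p,p)}\,(K^L e_p)(K^L e_p)^*
$$
on $\ell^2(\X)$, using that $K^L$ is self-adjoint, so that $K^L(x,p)K^L(p,y)$ is exactly the matrix of the rank one operator $(K^L e_p)(K^L e_p)^*$. Setting $u:=K^L e_p\in L$ and noting $\|u\|^2=(K^Le_p,e_p)=K^L(p,p)$, the operator $P=K^L-\|u\|^{-2}uu^*$ is the orthogonal projection onto $L\ominus\C u$. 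The crux is the identification $L\ominus\C u=L\cap e_p^\perp$: for $v\in L$ one has $(v,u)=(v,K^Le_p)=(K^Lv,e_p)=(v,e_p)=v(p)$, so orthogonality to $u$ within $L$ is the same as $v(p)=0$. Finally $Pe_p=K^Le_p-\|u\|^{-2}(e_p,u)\,u=0$, so $P$ annihilates $e_p$ and its range lies in $e_p^\perp$; transporting through the isomorphism $e_p^\perp\cong\ell^2(\X(p))$ identifies the restriction of $P$ with the orthogonal projection onto $L(p)$. Hence $\K^{(p)}$ is the projection kernel of $L(p)$ and $M(p)=M[L(p)]$.

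I expect the Palm correlation-function identity and the Schur-complement bookkeeping to be routine. The one genuinely load-bearing computation is the elementary identity $(v,K^L e_p)=v(p)$ for $v\in L$, which converts the spectral description $L\ominus\C u$ into the geometric description $L\cap e_p^\perp$ that defines $L(p)$; verifying that $P$ is a bona fide orthogonal projection annihilating $e_p$ (rather than merely an idempotent) is the other point requiring care.
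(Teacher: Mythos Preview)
Your proof is correct and follows the same two-step route as the paper: compute the Palm correlation functions via the Schur complement of the $(n+1)\times(n+1)$ minor at the pivot $p$, then identify the resulting kernel with the orthogonal projection onto $L(p)$. The only difference is cosmetic: the paper writes $K^L$ in block form with respect to $\ell^2(\X)=e_p^\perp\oplus\C e_p$ and checks by hand that the Schur complement $a-bd^{-1}c$ is an idempotent with range $L\cap e_p^\perp$, whereas you recognize the Palm kernel as the rank-one perturbation $K^L-\|u\|^{-2}uu^*$ with $u=K^L e_p$ and use the clean identity $(v,K^L e_p)=v(p)$ for $v\in L$ to get $L\ominus\C u=L\cap e_p^\perp$ in one stroke; your version avoids the separate idempotency verification since a projection minus a subprojection is automatically a projection.
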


\begin{proof}
Let $K$ denote the orthogonal projection $\ell^2(\X)\to L$ and $K(p)$ denote the orthogonal projection $\ell^2(\X(p))\to L(p)$.  Write the operator $K$ in the block form corresponding to the orthogonal decomposition $\ell^2(\X)=e_p^\perp\oplus\C e_p$:
$$
K=\begin{bmatrix} a & b\\ c & d\end{bmatrix}, \qquad a: e_p^\perp\to e_p^\perp, \quad b: \C e_p\to e_p^\perp, \quad c: e_p^\perp\to \C e_p, \quad d: \C e_p\to \C e_p.
$$
Note that $d$ is simply a real number, and our assumption on $L$ means that $d\ne0$. 
\smallskip

\emph{Step} 1. Let us prove that $K(p)=a-bd^{-1}c$, where we identify $e_p^\perp$ with $\ell^2(\X(p))$. Indeed, the operator $a-bd^{-1}c$ is obviously selfadjoint. The relation $K^2=K$, written in block form, reduces to a system of four relations on the blocks $a,b,c,d$. Using these relations it is readily verified that $a-bd^{-1}c$ is an idempotent. Thus, it is the operator of orthogonal projection $e_p^\perp \to L'$, where $L'$ is a subspace of $e_p^\perp$. It remains to show that $L'=L(p)$. 

Suppose $\xi\in L(p)$. Since the vector $\xi\oplus 0$ is contained in $L$, it is fixed by $K$. Writing this in terms of the block form we see that this is equivalent to two relations: $c\xi=0$ and $a\xi=\xi$. But then $(a-bd^{-1}c)\xi=\xi$, so that $\xi\in L'$.

Conversely, suppose $\xi\in L'$, which means $(a-bd^{-1}c)\xi=\xi$. Observe that $a\le1$ and $b d^{-1}c\ge0$ (because $c=b^*$). On the other hand, $(a-bd^{-1}c)\xi=\xi$ implies
$$
(a\xi,\xi)-(bd^{-1}c\xi,\xi)=(\xi,\xi).
$$
It follows that $a\xi=\xi$ and $bd^{-1}c\xi=0$. Observe also that the last relation implies $c\xi=0$. The combination of the two relations, $a\xi=\xi$ and $c\xi=0$, precisely means that $\xi\oplus 0$ is contained in $L$, so that $\xi\in L(p)$, as desired. 
\smallskip

\emph{Step} 2. Let us show that the matrix of the operator $a-bd^{-1}c$ serves as a correlation kernel for the reduced Palm measure $M(p)$. Then the proposition will be proved. 

Let $\rho_1,\rho_2,\dots$ denote the correlation functions of $M$ and $\wt\rho_1,\wt\rho_2,\dots$ denote the correlation functions of $M(p)$. By the very definition of $M(p)$, we have, for an arbitrary $n$ and arbitrary $n$-tuple $x_1,\dots,x_n$ of distinct points of $\X(p)$, 
$$
\wt\rho_n(x_1,\dots,x_n)=\frac{\rho_{n+1}(x_1,\dots,x_n,p)}{\rho_1(p)}.
$$
Denoting $x_{n+1}:=p$, we write this as
$$
\wt\rho_n(x_1,\dots,x_n)=\dfrac{\det[K(x_i,x_j]_{i,j=1}^{n+1}}{K(x_{n+1},x_{n+1})}.
$$
Write the $(n+1)\times(n+1)$-matrix in the numerator in the block form $\begin{bmatrix} \al &\be\\ \ga & \de \end{bmatrix}$, where $\al$ has format $n\times n$ and $\de$ is a number (the latter was previously denoted by $d$, and it is nonzero). Then we have 
$$
\det \begin{bmatrix} \al &\be\\ \ga & \de \end{bmatrix}=\de\cdot\det(\al-\be\de^{-1}\ga),
$$
which implies 
$$
\wt\rho_n(x_1,\dots,x_n)=\det(\al-\be\de^{-1}\ga).
$$The latter determinant coincides with the diagonal minor of the matrix of $a-bd^{-1}c$ corresponding to the $n$-tuple $x_1,\dots,x_n$. This completes the proof.  
\end{proof}

The next proposition is similar to the previous one. 

\begin{proposition}\label{prop6.A1}
Let $\X$ be a countable set, $L$ be a closed linear subspace in $\ell^2(\X)$, and $M=M[L]$ be the corresponding projection kernel measure on $\Om=\{0,1\}^\X$. We fix a point $p\in\X$.

Suppose that $L$ does not contain  $\C e_p$, so that the measure $\wt M(p)\in\P(\Om(p))$ exists. Then $\wt M(p)=M[\wt L(p)]$. 
\end{proposition}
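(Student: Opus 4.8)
The plan is to reduce the claim to Proposition \ref{prop6.A} by means of the particle--hole involution, so that conditioning on a \emph{hole} at $p$ for $M$ becomes conditioning on a \emph{particle} at $p$ for a complementary process.

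First I would record the elementary duality for projection kernel measures. Let $\sigma\colon\Om\to\Om$ be the flip $\om\mapsto\X\setminus\om$. For a determinantal measure with projection kernel $K=K^L$, the probability that a finite set of sites are all holes is the determinant of the complementary kernel $1-K$; hence the pushforward $\sigma_*M[L]$ is again a projection kernel measure, namely $\sigma_*M[L]=M[L^\perp]$, its kernel being $1-K$, the projection onto $L^\perp$. The hypothesis $K^L(p,p)<1$ is exactly $(1-K)(p,p)>0$, i.e.\ $L^\perp\not\subseteq e_p^\perp$, so Proposition \ref{prop6.A} applies to $M[L^\perp]$.

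Next I would check that $\sigma$ intertwines the two conditionings. Writing $\sigma_p\colon\Om(p)\to\Om(p)$ for the analogous flip $\tau\mapsto\X(p)\setminus\tau$, a direct check on the defining bijections $\Om_0(p)\cong\Om(p)\cong\Om_1(p)$ shows that $\sigma$ carries $\Om_0(p)$ onto $\Om_1(p)$ and that the hole-projection from $\Om_0(p)$ followed by $\sigma_p$ agrees with $\sigma$ followed by the particle-projection from $\Om_1(p)$. Consequently the reduced Palm measure of $\sigma_*M$ at $p$ equals $(\sigma_p)_*\wt M(p)$, and applying Proposition \ref{prop6.A} gives $(\sigma_p)_*\wt M(p)=M[(L^\perp)(p)]$. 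Flipping back by the involution $\sigma_p$ and using the same duality on $\Om(p)$, I obtain $\wt M(p)=M[((L^\perp)(p))^\perp]$, where the orthogonal complement is taken inside $\ell^2(\X(p))\cong e_p^\perp$.

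It then remains to identify $((L^\perp)(p))^\perp$ with $\wt L(p)$, and this is the step I expect to require the most care. Let $P$ denote orthogonal projection onto $H:=e_p^\perp$; by Definition \ref{def6.B}, $(L^\perp)(p)=L^\perp\cap H$ and $\wt L(p)=P(L)$. For $\eta\in H$ one has $(\eta,P\ell)=(\eta,\ell)$ for all $\ell\in L$, so the orthogonal complement of $\overline{P(L)}$ inside $H$ is precisely $L^\perp\cap H$; taking complements inside $H$ yields $\overline{P(L)}=((L^\perp)(p))^\perp$. Finally, since $\ker P=\C e_p$ is one-dimensional, $P(L)=P(L+\C e_p)=(L+\C e_p)\cap H$ is already closed (a closed subspace plus a finite-dimensional one is closed), whence $P(L)=\overline{P(L)}=\wt L(p)$, completing the identification and the proof. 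I note that a direct argument parallel to Proposition \ref{prop6.A} is also available: conditioning on a hole gives $\wt\rho_n(x_1,\dots,x_n)=(\rho_n(x_1,\dots,x_n)-\rho_{n+1}(x_1,\dots,x_n,p))/(1-K(p,p))$, and a block Schur-complement computation shows this equals the principal minors of the orthogonal projection $a+b(1-d)^{-1}c$ onto $\wt L(p)$, where $K=\begin{bmatrix}a&b\\c&d\end{bmatrix}$ with respect to $\ell^2(\X)=e_p^\perp\oplus\C e_p$; but the duality route is shorter, since it quotes Proposition \ref{prop6.A} directly.
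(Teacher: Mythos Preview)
Your argument is correct, and it takes a genuinely different route from the paper. The paper proceeds by a direct computation exactly parallel to its proof of Proposition \ref{prop6.A}: writing $K$ in block form with respect to $\ell^2(\X)=e_p^\perp\oplus\C e_p$, it first shows that $a+b(1-d)^{-1}c$ is the orthogonal projection onto $\wt L(p)$, and then verifies that the correlation functions of $\wt M(p)$, given by $\wt\rho_n(x_1,\dots,x_n)=\dfrac{\rho_n(x_1,\dots,x_n)-\rho_{n+1}(x_1,\dots,x_n,p)}{1-K(p,p)}$, are the principal minors of that operator via a determinant identity. You instead exploit the particle--hole symmetry $\sigma_*M[L]=M[L^\perp]$ to convert hole-conditioning into particle-conditioning, quote Proposition \ref{prop6.A} for $M[L^\perp]$, and then identify $((L^\perp)\cap e_p^\perp)^\perp=\wt L(p)$ inside $e_p^\perp$.

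Each approach has its advantages. The paper's proof is self-contained and yields the explicit kernel formula $\wt K(p)=a+b(1-d)^{-1}c$ as a byproduct, which can be useful elsewhere. Your duality argument is more conceptual: it explains \emph{why} Propositions \ref{prop6.A} and \ref{prop6.A1} are twin statements and avoids repeating the Schur-complement and correlation-function computations. The only point that deserves a word of care in your write-up is the closedness of $P(L)$; your observation that $P(L)=P(L+\C e_p)=(L+\C e_p)\cap e_p^\perp$ handles this cleanly, since $L+\C e_p$ is closed and the intersection of closed subspaces is closed. Incidentally, this also confirms that $\wt L(p)$ as defined in Definition \ref{def6.B}(ii) is automatically closed, a point the paper leaves implicit.
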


\begin{proof}
The argument is very similar to that in Proposition \ref{prop6.A}. 
Let again $K$ denote the orthogonal projection $\ell^2(\X)\to L$, which we write in the same block form, and let $\wt K(p)$ denote the orthogonal projection $\ell^2(\X(p))\to \wt L(p)$.  
\smallskip

\emph{Step} 1. Let us prove that $\wt K(p)=a+b(1-d)^{-1}c$. The assumption that $L$ does not contain  $\C e_p$ means that $d<1$, so that $1-d\ne0$. The operator $a+b(1-d)^{-1}c$ is obviously selfadjoint. From the same system of four relations on the blocks $a,b,c,d$ one can deduce that $a+b(1-d)^{-1}c$ is an idempotent. Thus, it is the operator of orthogonal projection $e_p^\perp \to \wt L'$, where $\wt L'$ is a subspace of $e_p^\perp$. It remains to show that $\wt L'=\wt L(p)$. 

Let $\xi\in \ell^2(\X)$ be a vector. Write it in the form $\xi=\xi_1\oplus\xi_2$, where $\xi_1\in e_p^\perp$ and $\xi_2\in\C e_p$. The condition $\xi\in L$ means $K\xi=\xi$, which in turn is equivalent to the system of two relations
$$
a\xi_1+b\xi_2=\xi_1, \qquad c\xi_1+d\xi_2=\xi_2.
$$
That system is further rewritten as 
$$
\xi_2=(1-d)^{-1}c\xi_1, \qquad (a+b(1-d)^{-1}c)\xi_1=\xi_1.
$$ 
Here the first relation shows that for $\xi\in L$, the component $\xi_2$ is uniquely determined by the component $\xi_1$, while the second relation shows that $\xi_1$ belongs to $\wt L(p)$ if and only if $\xi_1$ is in the range $\wt L'$ of the projection $a+b(1-d)^{-1}c$. Therefore, that projection coincides with $\wt K(p)$. 

\emph{Step} 2. It remains to prove that the matrix of the operator $a+b(1-d)^{-1}c$ serves as a correlation kernel for the  measure $\wt M(p)$. We will use the same notation as in step 2 of the proof of Proposition \ref{prop6.A}, only now $\wt\rho_1,\wt\rho_2,\dots$ will refer to the correlation functions of $\wt M(p)$, not $M(p)$. 

We have
$$
\wt\rho_n(x_1,\dots,x_n)=\frac{\rho_n(x_1,\dots,x_n)-\rho_{n+1}(x_1,\dots,x_n,p)}{1-\rho_1(p)}.
$$
Denoting $x_{n+1}:=p$, we consider again the $(n+1)\times(n+1)$-matrix $[K(x_i,x_j)]$ and write it in the block form $\begin{bmatrix} \al &\be\\ \ga & \de \end{bmatrix}$. Then the above ratio  is written as
$$
\frac{\det\al -\det \begin{bmatrix} \al &\be\\ \ga & \de \end{bmatrix}}{1-\de}=\frac{\det \begin{bmatrix} \al &\be\\ -\ga & 1-\de\end{bmatrix}}{1-\de},
$$
where the last equality is verified by expanding the determinant on the right-hand side with respect to the last row.  Finally, the resulting expression equals $\det(\al+\be(1-\de)^{-1}\ga)$. This means that $\wt\rho_n(x_1,\dots,x_n)$ equals the corresponding diagonal minor extracted from the matrix $a+b(1-d)^{-1}c$, which completes the proof.  
\end{proof}

\subsection{Estimates for the first correlation function of $M^\z$}

We return to our concrete situation when $\Om:=\{0,1\}^{\Z'}$. Recall the notation (see \eqref{eq3.B1} and \eqref{eq3.B2})
\begin{equation*}
C(z,z'):=\frac{(z-z')\sin{\pi z}\sin(\pi z')}{\pi\sin(\pi(z-z'))}, \quad z\ne z'; \qquad C(a,a):=\left(\dfrac{\sin(\pi a)}{\pi}\right)^2, \quad a\in\R\setminus\Z.
\end{equation*}
Note that $C(z,z')>0$ for any admissible $\z$.

\begin{proposition}\label{prop6.B}
Let $\z$ be an admissible pair of parameters and $\rho^\z_1(x)$ denote the first correlation function of $M^\z$. 

{\rm(i)} As $x$ ranges from $-\infty$ to $+\infty$ along $\Z'$, the function $\rho_1^\z(x)$ strictly decreases.

{\rm(ii)} One has 
$$
\lim_{x\to+\infty}\rho^\z_1(x)=0, \qquad \lim_{x\to-\infty}\rho^\z_1(x)=1.
$$
More precisely,
\begin{equation}\label{eq6.C}
\rho^\z_1(x)\sim C(z,z') x^{-1}, \quad x\gg0, \qquad 1-\rho^\z_1(x)\sim C(z,z')|x|^{-1}, \quad x\ll0.
\end{equation}
\end{proposition}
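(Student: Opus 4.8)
The starting point is the identity $\rho^\z_1(x)=K^\z(x,x)$, valid for any determinantal measure, together with the fact that $\K^\z$ is an orthogonal projection, so that $\rho^\z_1(x)=(\K^\z e_x,e_x)=\Vert\K^\z e_x\Vert^2\in[0,1]$ for every $x$. The plan is to extract all the needed information from the first difference $\rho^\z_1(x+1)-\rho^\z_1(x)$. For $z\ne z'$ I would insert \eqref{eq2.C2} and apply the recurrence $\psi(w+1)-\psi(w)=w^{-1}$ to each of the two digamma terms; after combining over a common denominator and using the definition \eqref{eq3.B1} of $C(z,z')$ this collapses to
\begin{equation*}
\rho^\z_1(x+1)-\rho^\z_1(x)=-\frac{C(z,z')}{(x+z+\tfrac12)(x+z'+\tfrac12)},\qquad x\in\Z'.
\end{equation*}
The case $z=z'=a$ follows either by continuity or directly from \eqref{eq2.C4} via $\psi'(w+1)-\psi'(w)=-w^{-2}$, giving the same formula with $C(a,a)$ and denominator $(x+a+\tfrac12)^2$.

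For part (i) it then remains to check that the denominator is strictly positive for every $x\in\Z'$; since $C(z,z')>0$, this forces $\rho^\z_1(x+1)-\rho^\z_1(x)<0$ and hence strict monotonicity. In the principal series $z'=\bar z$, and $(x+z+\tfrac12)(x+z'+\tfrac12)=|x+z+\tfrac12|^2>0$ because $z\notin\Z$. In the complementary series $z,z'\in(\ell,\ell+1)$ and, since $x+\tfrac12\in\Z$, both factors $x+z+\tfrac12$ and $x+z'+\tfrac12$ lie in the same open unit interval $(x+\tfrac12+\ell,\,x+\tfrac12+\ell+1)$ between consecutive integers; hence they are nonzero and share the same sign, so their product is positive.

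For part (ii), monotonicity together with $0\le\rho^\z_1\le1$ guarantees that the two one-sided limits $L_+:=\lim_{x\to+\infty}\rho^\z_1(x)$ and $L_-:=\lim_{x\to-\infty}\rho^\z_1(x)$ exist and satisfy $0\le L_+\le L_-\le1$. Telescoping the first-difference formula over all of $\Z'$ gives
\begin{equation*}
L_+-L_-=-C(z,z')\sum_{x\in\Z'}\frac1{(x+z+\tfrac12)(x+z'+\tfrac12)}.
\end{equation*}
I would evaluate this bilateral sum by partial fractions, writing the summand as $(z'-z)^{-1}\big((x+z+\tfrac12)^{-1}-(x+z'+\tfrac12)^{-1}\big)$, setting $n=x+\tfrac12\in\Z$, and using the classical expansion $\sum_{n\in\Z}(n+w)^{-1}=\pi\cot(\pi w)$; the elementary identity $\sin(\pi z')\cos(\pi z)-\sin(\pi z)\cos(\pi z')=-\sin(\pi(z-z'))$ then shows that the sum equals $C(z,z')^{-1}$. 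Consequently $L_--L_+=1$, and combined with the sandwich $0\le L_+\le L_-\le1$ this forces $L_+=0$ and $L_-=1$.

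Finally, the sharp asymptotics \eqref{eq6.C} follow by estimating the tails of the same telescoping series: for $x\gg0$ one has $\rho^\z_1(x)=C(z,z')\sum_{k\ge x}\big((k+z+\tfrac12)(k+z'+\tfrac12)\big)^{-1}\sim C(z,z')\sum_{k\ge x}k^{-2}\sim C(z,z')x^{-1}$, and symmetrically $1-\rho^\z_1(x)=C(z,z')\sum_{k<x}\big((k+z+\tfrac12)(k+z'+\tfrac12)\big)^{-1}\sim C(z,z')|x|^{-1}$ for $x\ll0$. I expect the only genuinely delicate point to be the exact evaluation of the bilateral cotangent sum and the verification that it reproduces $C(z,z')^{-1}$; once this value is in hand, the projection bound $\rho^\z_1\in[0,1]$ does all the remaining work, pinning the limits to the exact values $0$ and $1$ rather than merely determining their difference.
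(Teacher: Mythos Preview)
Your argument for part (i) is essentially identical to the paper's: both compute the first difference via the digamma recurrence and arrive at
\[
\rho^\z_1(x)-\rho^\z_1(x+1)=\frac{C(z,z')}{(x+z+\tfrac12)(x+z'+\tfrac12)},
\]
then observe that the right-hand side is strictly positive. You supply more detail on the positivity of the denominator in the two series, but the idea is the same.

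Part (ii), however, is handled quite differently. The paper does not argue directly: it introduces the particle/hole involution $\om\mapsto\om^\circ$ on $\Om$, notes that \eqref{eq6.C} is equivalent to an asymptotic statement about the first correlation function of the transformed measure $(M^\z)^\circ$, and then cites \cite[Corollary 1.6]{Ols-2011} for that statement. Your route is self-contained and more elementary: you telescope the first-difference identity, evaluate the resulting bilateral series via the cotangent expansion to obtain $L_--L_+=1$, and then use the projection bound $0\le\rho^\z_1\le1$ to pin down $L_+=0$, $L_-=1$; the sharp asymptotics \eqref{eq6.C} drop out of the tail sums. The advantage of your approach is that it avoids any appeal to external results and stays entirely within the material already developed in the paper; the paper's approach, by contrast, connects the statement to the particle/hole picture used elsewhere in \cite{Ols-2011}. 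One small point worth tightening in your write-up: the splitting into partial fractions produces two individually conditionally convergent series, so you should make explicit that the symmetric (principal-value) interpretation of $\sum_{n\in\Z}(n+w)^{-1}=\pi\cot(\pi w)$ is being used, which is legitimate because the original bilateral sum is absolutely convergent; and the degenerate case $z=z'=a$ should be covered either by continuity or by the companion identity $\sum_{n\in\Z}(n+a)^{-2}=\pi^2/\sin^2(\pi a)=C(a,a)^{-1}$.
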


The proposition directly implies 
\begin{corollary}\label{cor6.A}
 For any admissible pair of parameters $\z$ and any $p\in \Z'$, we have $0<\rho^\z_1(p)<1$.  In particular, the reduced Palm measure $M^\z(p)$ and the related second type measure are  well-defined for any $p\in\Z'$
\end{corollary}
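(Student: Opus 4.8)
The plan is to read off both strict inequalities directly from Proposition \ref{prop6.B}, combined with the elementary fact that a first correlation function always takes values in $[0,1]$. First I would recall that $\rho^\z_1(x)=M^\z(\Cyl(x))$ is a probability, so automatically $0\le\rho^\z_1(x)\le1$ for every $x\in\Z'$. Then, fixing $p\in\Z'$ and invoking the strict monotonicity from part (i) of Proposition \ref{prop6.B}, I would compare $p$ with its two lattice neighbours to obtain
\begin{equation*}
\rho^\z_1(p)<\rho^\z_1(p-1)\le1, \qquad \rho^\z_1(p)>\rho^\z_1(p+1)\ge0,
\end{equation*}
which yields $0<\rho^\z_1(p)<1$. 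As a cross-check, the same conclusion follows from part (ii): since the values $0$ and $1$ are approached only in the limits $x\to\pm\infty$, a strictly decreasing function cannot attain either of them at a finite point of $\Z'$.

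To finish, I would translate these inequalities into the two existence assertions. The text preceding Proposition \ref{prop6.B} already records that $M^\z(\Om_1(p))=\rho^\z_1(p)$; therefore, by Definition \ref{def6.A}, the reduced Palm measure $M^\z(p)$ is defined exactly when $\rho^\z_1(p)>0$, while the second type measure $\wt M^\z(p)$ is defined exactly when $\rho^\z_1(p)<1$. Both conditions hold for every $p\in\Z'$ by the first step, so both measures are well defined, which proves the corollary. Equivalently, in terms of the gamma kernel this reads $0<K^\z(p,p)<1$ at every site.

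I do not anticipate any genuine obstacle here: all of the analytic content is already carried by Proposition \ref{prop6.B}, and the corollary is a purely formal consequence of strict monotonicity together with the trivial bounds on a probability. The only point requiring any care is the bookkeeping identification of the conditions ``$\rho^\z_1(p)>0$'' and ``$\rho^\z_1(p)<1$'' with the existence criteria of Definition \ref{def6.A}, and that correspondence has already been made explicit in the text.
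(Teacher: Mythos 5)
Your proposal is correct and follows essentially the same route as the paper, which simply notes that Corollary \ref{cor6.A} is a direct consequence of Proposition \ref{prop6.B}: strict monotonicity of $\rho^\z_1$ together with the boundary behaviour (or, as you observe, even just the trivial bounds $0\le\rho^\z_1\le1$ at the neighbouring lattice points) forces $0<\rho^\z_1(p)<1$, and the identification $M^\z(\Om_1(p))=\rho^\z_1(p)$ then yields both existence claims via Definition \ref{def6.A}. Your spelled-out comparison with $p\pm1$ is a clean, slightly more self-contained way of making the same deduction explicit.
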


\begin{proof}[Proof of Proposition \ref{prop6.B}]
(i) Suppose first that $z\ne z'$. Then we may use formula \eqref{eq2.C2} which gives us
\begin{equation*}
\rho_1^\z(x)=\frac{\sin{\pi z}\sin(\pi z')}{\pi\sin(\pi(z-z'))}\,(\psi(x+z+\tfrac12)-\psi(x+z'+\tfrac12)), \quad x\in\Z'.
\end{equation*}
Using the functional equation for the $\psi$-function (\cite[Section 1.7.1, (8)]{Er}) in the form
$$
\psi(u)-\psi(u+1)=-\frac1u
$$
we obtain
$$
\rho^\z_1(x)-\rho^\z_1(x+1)=\frac{C(z,z')}{(x+z+\tfrac12)(x+z'+\tfrac12)}.
$$
This formula also follows from \eqref{eq3.B}. Since $C(z,z')>0$, this expression is strictly positive.

For $z=z'=a$ the argument is the same: one may use \eqref{eq2.C4} instead of \eqref{eq2.C2} or apply \eqref{eq3.B}. 

(ii)  Introduce a notation: given $\om\in\Om$, we set
\begin{equation}\label{eq6.D}
X(\om)=\om\cap\Z'_+, \qquad Y(\om):=\Z'_-\setminus\om, \qquad \om^\circ: =X(\om)\sqcup \Y(\om).
\end{equation}
Note that $\om^\circ$ coincides with the symmetric difference $\om\,\triangle\,\Z'_{<0}$ and $(\om^\circ)^\circ=\om$, so that $\om\mapsto \om^\circ$ is an involutive transformation of the space $\Om$ (a `particle/hole involution', see \cite[Section 0.4]{Ols-2011}). Let $(M^\z)^\circ$ stand for the pushforward of $M^\z$ under this transformation. The asymptotic relations \eqref{eq5.B} precisely mean that the first correlation function of the measure $(M^\z)^\circ$ decays at $\pm\infty$ as $C(z,z')|x|^{-1}$, and the latter claim is precisely what is proved in \cite[Corollary 1.6]{Ols-2011}. (In \cite{Ols-2011}, the measure $(M^\z)^\circ$ is denoted by $P_{z,z'}$.)
\end{proof}

\subsection{The link between $M^\z(p)$ and $M^{(z+1,z'+1)}$.}

Let us introduce the notation which is used in the formulation of Theorem \ref{thm6.A} below. 

As above, we denote by $\Om$ the space $\{0,1\}^{\Z'}$. We fix an arbitrary point $p\in\Z'$. In accordance with \eqref{eq6.A} we set 
$$
\Z'(p):=\Z'\setminus \{p\}, \quad \Om(p):=\{0,1\}^{\Z'(p)}.
$$
Recall the definition \eqref{eq6.A} of the cylinder set $\Om_0(p)\subset\Om$. Sometimes it will be convenient to regard $\Om(p)$ as a subspace of $\Om$ by making use of the natural bijection $\Om_0(p)\leftrightarrow\Om(p)$. 

We fix  an arbitrary admissible pair of parameters $\z$. Let $M^\z$ be the corresponding gamma kernel measure  and $M^\z(p)$ be its reduced Palm measure at $p$ (it is well defined by virtue of Corollary \ref{cor6.A}). Note that the measures $M^\z$  and $M^\z(p)$ are mutually singular, see \cite{Buf-rigid}.

We also need the second type measure $\wt M(p)$ on $\Om(p)$, associated with the measure $M:=M^\zzz$ (Definition \ref{def6.A} (ii)). Its existence is also guaranteed by Corollary \ref{cor6.A}, and we denote it by $\wt M^\zzz(p)$. 
 
Introduce a function on $\Z'(p)$:
$$
a(x)=a^\z_p(x):=\dfrac{(x-p)^2}{(x+z+\tfrac12)(x+z'+\tfrac12)}, \qquad x\in\Z'(p).
$$
The function $a^\z_p$ is strictly positive and the difference $a(x)-1$ lies in $\ell^2(\Z'(p))$, as is seen from the bound
$$
a(x)=1-\frac{2p+z+z'+1}x+O\left(\frac1{x^2}\right), \qquad |x|\gg0.
$$
Therefore, by virtue of Proposition \ref{prop5.A}, the functional $\PPsi_{a,\wt M^\zzz(p)}$ is well defined, as an element of $L^1(\Om(p), \wt M^\zzz(p))$.  Note that the function $a(x)-1$ is not contained in $\ell^1(\Z'(p))$ (unless $2p+z+z'+1=0$). This is why we need to use Proposition \ref{prop5.A} (see Remark \ref{rem5.A} above).

\begin{theorem}\label{thm6.A}
For any admissible pair $\z$ and any $p\in \Z'$,  we have 
\begin{equation}\label{eq6.E}
M^\z(p)=\PPsi_{a,\wt M^{(z+1,z'+1)}}\cdot \wt M^{(z+1,z'+1)}(p).
\end{equation}
\end{theorem}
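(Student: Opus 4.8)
The plan is to recast \eqref{eq6.E} as an identity between two subspaces of $\ell^2(\Z'(p))$ and then to prove that identity with the help of the inductive relation of Corollary \ref{cor3.D}.

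Set $L:=\Ran(\K^\z)$ and $L':=\Ran(\K^\zzz)$, and let $L(p)\subset\ell^2(\Z'(p))$ be the Palm subspace attached to $L$ (Definition \ref{def6.B}(i)) and $\wt L'(p)\subset\ell^2(\Z'(p))$ the second type subspace attached to $L'$ (Definition \ref{def6.B}(ii)). By Proposition \ref{prop6.A} the left-hand side of \eqref{eq6.E} equals $M[L(p)]$, and by Proposition \ref{prop6.A1} we have $\wt M^\zzz(p)=M[\wt L'(p)]$. I then introduce
$$
\alpha(x):=\frac{x-p}{x+z'+\half}\,\frac{A^\z(x)}{A^\zzz(x)},\qquad x\in\Z'(p),
$$
and check from \eqref{eq2.B} (separately in the two series, using $|A^\z|\equiv1$ in the principal case) that $|\alpha(x)|^2=a^\z_p(x)$. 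As $\alpha$ is nonvanishing on $\Z'(p)$ and $|\alpha|^2-1=a-1\in\ell^2(\Z'(p))$, Theorem \ref{thm5.A} gives $M[\alpha\,\wt L'(p)]=\PPsi_{a,\wt M^\zzz(p)}\cdot\wt M^\zzz(p)$. Hence \eqref{eq6.E} is equivalent to the single subspace identity $L(p)=\alpha\,\wt L'(p)$.

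To attack this identity I would use \eqref{kzkmz} with $m=1$: writing $\phi:=A^\z/A^\zzz$ and $V:=V^\z_1$ (multiplication by $\phi$), one has $L=\C g^\z_0\oplus VL'$, together with the relations $\phi\,g^\zzz_k=-g^\z_{k+1}$ read off from \eqref{eq3.g}. A generic element of $L$ restricted to $\Z'(p)$ has the form $\phi h-c_h\,g^\z_0$ with $h\in L'$, and it lies in $L(p)$ exactly when $c_h=\phi(p)h(p)/g^\z_0(p)$ (forced by vanishing at $p$). The inclusion $\alpha\,\wt L'(p)\subseteq L(p)$ then reduces, via $\alpha h=\tfrac{x-p}{x+z'+\frac12}\phi h$ and $\tfrac{x-p}{x+z'+\frac12}=1-\tfrac{p+z'+\frac12}{x+z'+\frac12}$, to the purely algebraic fact that
$$
\frac{1}{x+z'+\half}\,g^\z_m\in\Span\{g^\z_0,\dots,g^\z_m\}\subseteq L,\qquad m\ge1 ;
$$
for $m=1$ this is $\tfrac{1}{x+z'+\frac12}g^\z_1=(z'-z-1)^{-1}(g^\z_1-g^\z_0)$, and the general case follows by induction from the contiguous relation $(x+z+m+\tfrac12)\,g^\z_m=(x+z'+m-\tfrac12)\,g^\z_{m-1}$, itself an immediate consequence of \eqref{eq3.g}.

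The main obstacle is to upgrade this inclusion to the equality $L(p)=\alpha\,\wt L'(p)$ of closed, infinite-dimensional, infinite-codimensional subspaces. I would handle it by working with the orthogonal projections $P_1$ onto $\alpha\,\wt L'(p)$ and $P_2$ onto $L(p)$: the inclusion gives $0\le P_1\le P_2$, so $P_2-P_1\ge0$, and a nonnegative operator with vanishing diagonal is zero. Thus it suffices to match the diagonals, i.e. to check that $M[L(p)]$ and $M[\alpha\,\wt L'(p)]$ have the same first correlation function; the former is the Schur complement $K^\z(x,x)-K^\z(x,p)^2/K^\z(p,p)$ (the kernel being real), and the latter is computed from Lemma \ref{lemma5.A} applied to multiplication by $\alpha$. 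This scalar identity, together with the subspace inclusion already established, yields $P_1=P_2$ and hence \eqref{eq6.E} via Theorem \ref{thm5.A}. An alternative, fully symmetric route to the reverse inclusion runs through $V^{-1}L=\SSpan\{g^\zzz_k:k\ge-1\}$, using that the correction is arranged so that $\phi^{-1}f=h+c_h\,g^\zzz_{-1}$ vanishes at $p$, cancelling the apparent pole of $\tfrac{x+z'+\frac12}{x-p}$.
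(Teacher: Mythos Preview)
Your reduction to the subspace identity $L(p)=\alpha\,\wt L'(p)$ is exactly the same as the paper's (your $\alpha$ coincides with the paper's $\phi(x)=(x-p)/\sqrt{(x+z+\tfrac12)(x+z'+\tfrac12)}$), and your proof of the inclusion $\alpha\,\wt L'(p)\subseteq L(p)$ via the contiguous relation for the $g^\z_m$ is correct and amounts to one direction of the paper's key relation \eqref{eq6.G}.

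The genuine gap is the reverse inclusion. Your proposal to match the diagonals of $P_1$ and $P_2$ is logically sound (a nonnegative operator with zero diagonal is zero), but you never \emph{establish} the required scalar identity. The diagonal of $P_2$ is indeed the Schur complement, while the diagonal of $P_1$ is $|\alpha(x)|^2\,\big[K^{\wt L'(p)}(1+(|\alpha|^2-1)K^{\wt L'(p)})^{-1}\big](x,x)$ from Lemma~\ref{lemma5.A}; there is no visible cancellation between these two expressions, and proving their equality pointwise is not easier than the subspace identity you are trying to derive. The ``alternative symmetric route'' you sketch is too vague to close the gap: one must control whether functions with an apparent pole at $p$ actually land back in $\ell^2$, and you have not done this.

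The fix is much simpler and is what the paper does: the equality already holds at the \emph{algebraic} level, and this follows from your own inclusion by a finite dimension count. For each $n\ge1$, your argument shows
\[
\frac{x-p}{x+z'+\tfrac12}\,\Span\{g^\z_1,\dots,g^\z_n\}\subseteq\big\{f\in\Span\{g^\z_0,\dots,g^\z_n\}:f(p)=0\big\},
\]
and both sides have dimension $n$ (the right-hand side because $g^\z_0(p)\ne0$, as all gamma factors are finite and nonzero). Hence equality holds for each $n$, and taking the union over $n$ gives the algebraic identity. To pass to the closed subspaces one only needs two density facts: $\Span\{g^\zzz_m:m\ge0\}\big|_{\Z'(p)}$ is dense in $\wt L'(p)$ (immediate from Theorem~\ref{thm3.A}/\ref{thm3.C}), and $\{f\in\Span\{g^\z_m:m\ge0\}:f(p)=0\}\big|_{\Z'(p)}$ is dense in $L(p)$ (a codimension-$1$ hyperplane in a dense subspace is dense in its closure). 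These, together with the boundedness of multiplication by $\alpha$ and $\alpha^{-1}$, give $L(p)=\alpha\,\wt L'(p)$ without any appeal to first correlation functions.
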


\begin{remark}\label{rem6.C}
Our definition \eqref{eq5.E} of the normalized multiplicative functionals may seem complicated because of a limit transition. However, one can prove that  $\PPsi_{a,\wt M^\zzz(p)}$ coincides, up to a constant factor, with the functional $\check\Psi_{p;z,z'}$ given by the explicit formula  \eqref{azp-funct}.
\end{remark}

\begin{remark}\label{rem6.A}
One may slightly change our viewpoint and treat $M^\z(p)$ as a measure on $\Om$ by making use of the bijection $\Om(p)\leftrightarrow\Om_0(p)$. Then, in this interpretation, the theorem implies that $M^\z$ is absolutely continuous with respect to $M^{(z+1,z'+1)}$. 
To explain this point in greater detail, note that in this paper, we only consider multiplicative functionals corresponding to  functions that never assume value zero. A slightly different equivalent  formalism is that of \cite {Bu}: in that paper, it is allowed that a function $a$ assume value zero with positive probability: in that case, if a configuration contains a particle $x$ such that $a(x)=0$, then the value of the multiplicative functional $\Psi_a$ is set to be zero at that configuration; otherwise, the definition is the same, and the same modification is adopted for normalized multiplicative functionals. If such, slightly more general, definition is adopted, then one can remove the tildes and rewrite the equation \eqref{eq6.E} in the equivalent simpler form
\begin{equation}\label{eq6.E-bis}
M^\z(p)=\PPsi_{a, M^{(z+1,z'+1)}}\cdot  M^{(z+1,z'+1)}.
\end{equation}
\end{remark}

\begin{remark}\label{rem6.B}
Theorem \ref{thm6.A} establishes a hierarchy  of Palm measures for the gamma kernels; in \cite{Bu-2017}, a similar hierarchy of Palm measures had been obtained for Bessel kernels.
\end{remark}

\begin{proof}[Proof of Theorem \ref{thm6.A}]
\emph{Step} 1. Let $\F_{z}$ stand for the space of all rational functions $f(x)$ subject to the following two conditions:

\smallskip

$\bullet$ $f(x)$ is regular at infinity and $f(\infty)=0$;

$\bullet$ the only singularities of $f(x)$ in $\C$ are simple poles contained in the set 
$$
\{-z-m-\tfrac12: m\in\Z_{\ge0}\}.
$$
(Recall that $z\in\C\setminus\Z$ because $(z,z')$ is admissible, see Definition \ref{def2.A}.)
 
Next, given $p\in\Z'$, let $\F_{z}(p)$ denote the subspace of functions $f\in\F_{z}$ vanishing at $p$. 

We claim that 
\begin{equation}\label{eq6.F}
\F_z(p)=\frac{x-p}{x+z+\tfrac12}\,\F_{z+1}.
\end{equation}

Indeed, for $n=0,1,2,\dots$, let $\F_{z,n}\subset\F_{z}$ denote the subspace of functions with all poles contained in the finite set
$$
\{-z-m-\tfrac12: 0\le m\le n\}.
$$
Evidently, $\dim\F_{z,n}=n+1$ and $\F_{z}$ is the union of the subspaces $\F_{z,n}$. 

To prove \eqref{eq6.F} it suffices to check the equality
\begin{equation*}
\F_{z}(p)\cap\F_{z,n} =\frac{x-p}{x+z+\tfrac12}\,\F_{z+1, n-1}, \qquad n\in\Z_{\ge1}.
\end{equation*}
Since $p\in\Z'$ and $z\notin\Z$, the point $p$ is not contained in the set of possible poles. It follows that the space on the right is contained in the space on the left. On the other hand, both these spaces have the same dimension $n$. This completes the proof of  \eqref{eq6.F}.  
\smallskip

\emph{Step} 2. Recall the definition \eqref{eq3.g} of the functions $g^\z_m(x)$:
\begin{equation*}
g^\z_m(x):=\frac{\sin(\pi z')\Ga(1+z-z')}{\pi}\, \frac{\Ga(x+z+\tfrac12)}{\sqrt{\Ga(x+z+\tfrac12)\Ga(x+z'+\tfrac12)}} \,\frac{\Ga(x+z'+m+\tfrac12)}{\Ga(x+z+m+\tfrac32)}.
\end{equation*}
Below $\Span\{\cdots\}$ denotes the algebraic linear span of a given set $\{\cdots\}$ of functions. 

We claim that 
\begin{multline}\label{eq6.G}
\left\{g\in\Span\{g^\z_0, g^\z_1,\dots\}: g(p)=0\right\}\\
=\frac{x-p}{\sqrt{(x+z+\tfrac12)(x+z'+\tfrac12)}}\cdot\Span\{g^\zzz_0, g^\zzz_1,\dots\}.
\end{multline}
Indeed, we will show that this relation can be reduced to \eqref{eq6.F}. 

Below we use the conventional notation for the Pochhammer symbol
$$
(t)_m:=t(t+1)\dots(t+m-1)=\frac{\Ga(t+m)}{\Ga(t)}, \quad m=0,1,2,\dots, \quad t\in\C.
$$
It is convenient to rewrite the above expression for $g^\z_m(x)$ in the form
$$
g^\z_m(x)=\ga_{z,z'}(x)\frac{(x+z'+\tfrac12)_m}{(x+z+\tfrac12)_{m+1}},
$$
where
\begin{equation*}
\ga_{z,z'}(x):=\frac{\sin(\pi z')\Ga(1+z-z')}{\pi}\,\frac{\Ga(x+z'+\tfrac12)}{\sqrt{\Ga(x+z+\tfrac12)\Ga(x+z'+\tfrac12)}}.
\end{equation*}
Next, by replacing $(z,z')$ with $(z+1,z'+1)$ we obtain
$$
g^\zzz_m(x)=\ga_{z,z'}(x)\,
\frac{x+z'+\tfrac12}{\sqrt{(x+z+\tfrac12)(x+z'+\tfrac12)}}\,\frac{(x+z'+\tfrac32)_m}{(x+z+\tfrac32)_{m+1}}.
$$

Observe now that
$$
\Span\left\{\frac{(x+z'+\tfrac12)_m}{(x+z+\tfrac12)_{m+1}}: m=0,1,2,\dots\right\}=\F_{z}
$$
and likewise
$$
\Span\left\{\frac{(x+z'+\tfrac32)_m}{(x+z+\tfrac32)_{m+1}}: m=0,1,2,\dots\right\}=\F_{z+1}.
$$

Therefore, by removing  the common factor $\ga_{z,z'}(x)$ we can reduce the desired relation \eqref{eq6.G} to the following one
$$
\F_z(p)=\frac{x-p}{\sqrt{(x+z+\tfrac12)(x+z'+\tfrac12)}}\cdot\frac{x+z'+\tfrac12}{\sqrt{(x+z+\tfrac12)(x+z'+\tfrac12)}}\cdot\F_{z+1}.
$$
Finally, after the obvious simplification we obtain just the relation \eqref{eq6.F}. This completes the proof of \eqref{eq6.G}.

\smallskip

\emph{Step} 3. 
Recall (see Section 2) that $M^\z$ possesses a correlation kernel corresponding to a selfadjoint projection operator $\K^\z$. Denote its range $\Ran(\K^\z)$ by $L^\z$.  Let us show that, in the notation of Definition \ref{def6.B},
\begin{equation}\label{eq6.H}
L^\z(p)=\phi\cdot\wt L^\zzz(p), 
\end{equation}
where we abbreviate
$$
\phi(x):=\frac{x-p}{\sqrt{(x+z+\tfrac12)(x+z'+\tfrac12)}}, \qquad x\in\Z'(p).
$$

Let us emphasize that we regard $\phi$ as a function on $\Z'(p)$, not $\Z'$. Note also that the operator of multiplication by $\phi$ is a bounded operator on $\ell^2(\Z'(p)$, and so is its inverse. 

Therefore, for the proof of \eqref{eq6.H} it suffices to exhibit dense subsets 
$$
L_*^\z(p)\subset L^\z(p), \qquad \wt L_*^\zzz(p)\subset\wt L^\zzz(p)
$$
such that 
\begin{equation}\label{eq6.I}
L_*^\z=\phi\cdot \wt L_*^\zzz(p).
\end{equation}

Let $\SSpan\{\cdots\}$ denote the closed linear span of a given set of vectors of a Hilbert space. From the results of Section \ref{sect3} we know that
\begin{equation}\label{eq6.J}
L^\z=\SSpan\{g^\z_m: m=0,1,2,\dots\}, \quad L^\zzz=\SSpan\{g^\zzz_m: m=0,1,2,\dots\}.
\end{equation}
Here, in the case of the complementary series, we assume $z<z'$.

Let us set
$$
L^\z_*(p):=\left\{f\in\Span\{g^\z_m: m=0,1,2,\dots\}, f(p)=0\right\}\bigg|_{\Z'(p)},
$$
where the symbol $\big|_{\Z'(p)}$ denotes restriction from $\Z'$ to $\Z'(p)$. Likewise, we set
$$
\wt L^\zzz_*(p):=\left\{\wt f\in\Span\{g^\zzz_m: m=0,1,2,\dots\}\right\}\bigg|_{\Z'(p)},
$$

This definition is suggested by \eqref{eq6.J}, from which we obtain that $L^\z_*(p)$ is dense in $L^\z(p)$ and, likewise, $\wt L^\zzz_*(p)$ is dense in $\wt L^\zzz(p)$. (Indeed, the latter statement is obvious while the former statement needs a little argument: here we use the fact that if $L$ is a Hilbert space, $L'\subset L$ a codimension $1$ subspace and $L_*\subset L$ a dense subspace, then $L_*\cap L'$ is dense in $L'$.)

The desired equality \eqref{eq6.I} now follows from \eqref{eq6.G}. 
\smallskip

\emph{Step} 4.
Finally we deduce the claim of the theorem from \eqref{eq6.H}. Indeed, 
from Propositions \ref{prop6.A} and \ref{prop6.B} we have
\begin{equation}\label{mzp}
M^\z(p)=M[L^\z(p)], \qquad \wt M^\zzz=M[\wt L^\zzz(p)].
\end{equation}
Substituting  the relation  \eqref{eq6.H} into Theorem \ref{thm5.A} and using  the identification \eqref{mzp}, we obtain the desired relation \eqref{eq6.E}. This completes the proof.
\end{proof}

\end{document}